\theoremstyle{plain}
\newtheorem{theorem}{Theorem}
\newtheorem{lemma}{Lemma}
\newtheorem{proposition}{Proposition}
\newtheorem{corollary}{Corollary}[theorem]
\theoremstyle{definition}
\newtheorem{example}{Example}
\newtheorem{remark}{Remark}
\newtheorem{definition}{Definition}
\newtheorem{pr}{Question}
\begin{document}

\begin{center}\Large
\textbf{On one question of  Shemetkov  about composition formations }
\normalsize

\smallskip
V.\,I. Murashka

 \{mvimath@yandex.ru\}

 Francisk Skorina Gomel State University, Gomel, Belarus\end{center}

\textbf{Abstract.} In this paper one construction of composition formations was introduced. This construction contains formations of quasinilpotent  groups, $c$-supersoluble groups, groups defined by ranks of chief factors and some new classes of groups. A partial answer on a question of L.\,A. Shemetkov about the intersection of $\mathfrak{F}$-maximal subgroups and the $\mathfrak{F}$-hypercenter was given for these composition formations.

 \textbf{Keywords.} Finite group; $c$-supersoluble group; quasinilpotent group, quasi-$\mathfrak{F}$-group; hereditary saturated formation; solubly saturated formation; $\mathfrak{F}$-maximal subgroup; $\mathfrak{F}$-hypercenter of a group.

\textbf{AMS}(2010). 20D25,  20F17,   20F19.

\section*{Introduction}

\emph{All groups considered here will be finite.}
 Through   $G$, $p$ and $\mathfrak{X}$   we always denote here respectively a finite group, a prime and a class of groups.

One of the directions in the modern group theory is to \textbf{construct classes of groups} and \textbf{study the properties of all groups in such class} (formation, Fitting class, Schunk class and etc.)

Recall that a \emph{formation} is a class $\mathfrak{X}$ of groups with the following properties:
$(a)$ every homomorphic image of an $\mathfrak{X}$-group is an $\mathfrak{X}$-group, and
$(b)$ if $G / M$ and $G / N$ are $\mathfrak{X}$-groups, then also $G/(M\cap N)\in \mathfrak{X}$.

One of the main classes of formations is the class of local formations. Recall that a function of the form $f: \mathbb{P}\rightarrow\{formations\}$ is called a \emph{formation function} and   a formation $\mathfrak{F}$ is called \emph{local} \cite[IV, 3.1]{s8} if
\begin{center} $\mathfrak{F}=(G\,|\, G/C_G(\overline{H})\in f(p)$ for every $p\in\pi(\overline{H})$ and every chief factor $\overline{H}$ of $G$)
\end{center}
for some formation function $f$. In this case $f$ is called a \emph{local definition} of $\mathfrak{F}$.
By the Gasch\"utz-Lubeseder-Schmid theorem, a  formation is local if and only if it is non-empty and \emph{saturated}, i.e. from $G/\Phi(G)\in\mathfrak{F}$ it follows that $G\in\mathfrak{F}$ where $\Phi(G)$ is the Frattini subgroup of $G$.    The classes of all unit $ \mathfrak{E}$, nilpotent $\mathfrak{N}$, metanilpotent $\mathfrak{N}^2$, supersoluble $\mathfrak{U}$ and soluble groups $\mathfrak{S}$ are examples of local formations.

The most applications of local formations are  in the theory of soluble groups. Let us mention another interesting class of formations of soluble groups. Let $\overline{N}$ be a chief factor of $G$. Then $\overline{N}=\overline{N}_1\times\dots\times \overline{N}_n$ where $\overline{N}_i$ are isomorphic simple groups. The number $n=r(\overline{N}, G)$ is the \emph{rank} of $\overline{N}$ in $G$. A \emph{rank function} $R$ \cite[VII, 2.3]{s8} is a map which associates with each prime $p$
a set $R(p)$ of natural numbers. For each rank function let
\begin{center}
  $\mathfrak{E}(R)=(G\in\mathfrak{S}\,|$ for all $p\in \mathbb{P}$ each $p$-chief factor of $G$ has rank in $R(p)$).
\end{center}
Note that $\mathfrak{E}(R)$ is a formation.
Heineken \cite{r1} and Harman \cite{r2} characterized all rank functions $R$ for which  $\mathfrak{E}(R)$ is local. Analogues questions for formations not of full characteristic were studied by Huppert \cite{r3}, Kohler \cite{r4} and Harman \cite{r1}.  Haberl and Heineken \cite{r5} characterized all rank functions $R$ for which  $\mathfrak{E}(R)$ is a Fitting formation.


 A function of the form $f: \mathbb{P}\cup\{0\}\rightarrow\{formations\}$ is called a composition definition. Recall \cite[p. 4]{s5} that  a formation $\mathfrak{F}$ is called \emph{composition} or \emph{Baer-local} if
\begin{center}
$\mathfrak{F}=(G\,|\, G/G_\mathfrak{S}\in f(0)$ and $G/C_G(\overline{H})\in f(p)$  for every  abelian $p$-chief factor $\overline{H}$ of $G$)
\end{center}
for some composition definition $f$. A formation is composition (Baer-local) \cite[IV, 4.17]{s8}  if and only if it is \emph{solubly saturated}, i.e. from $G/\Phi(G_\mathfrak{S})\in\mathfrak{F}$ it follows that $G\in\mathfrak{F}$, where $G_\mathfrak{S}$ is the soluble radical of $G$.

Note that a local formation is a composition formation. The converse is false. An example of nonlocal composition formation is the class of all quasinilpotent groups $\mathfrak{N}^*$ that was introduced by Bender \cite{18}.

Recall that a chief factor $\overline{H}$ of  $G$ is called   $\mathfrak{X}$-\emph{central} in $G$ provided    $\overline{H}\rtimes G/C_G(\overline{H})\in\mathfrak{X}$ (see \cite[p. 127--128]{s6}), otherwise it is called $\mathfrak{X}$-\emph{eccentric}. The symbol $\mathrm{Z}_\mathfrak{X}(G)$ denotes the $\mathfrak{X}$-\emph{hypercenter} of $G$, that is, the largest normal subgroup of $G$
such that every chief factor $\overline{H}$ of $G$ below it is $\mathfrak{X}$-central.  If $\mathfrak{X}=\mathfrak{N}$ is the class of all nilpotent groups, then $\mathrm{Z}_\mathfrak{N}(G)=\mathrm{Z}_\infty(G)$ is the hypercenter of $G$. If $\mathfrak{F}$ is a composition formation, then by {\cite[1, 2.6]{s5}}
$$\mathfrak{F}=(G\,|\,Z_\mathfrak{F}(G)=G). $$

The general definition of a composition formation $\mathfrak{F}$  gives little information about the action of an $\mathfrak{F}$-group $G$ on its non-abelian chief factors. Therefore several families of composition formations were introduced by giving additional information about the action of an $\mathfrak{F}$-group on its non-abelian chief factors.  For example, in \cite{sk2, sk1}  Guo and Skiba introduced the class $\mathfrak{F}^*$ of  quasi-$\mathfrak{F}$-groups for a saturated formation $\mathfrak{F}$:
 \begin{center}
   $\mathfrak{F}^*=(G\,|$ for every $\mathfrak{F}$-eccentric chief factor $\overline{H}$ and every $x\in G$, $x$ induces an inner automorphism on $\overline{H}$).
 \end{center}
  If $\mathfrak{N}\subseteq\mathfrak{F}$ is a normally hereditary saturated formation, then $\mathfrak{F}^*$  is a normally hereditary solubly saturated formation by \cite[Theorem 2.6]{sk2}.

Another example of a nonlocal composition formation is the class of all $c$-supersoluble groups that was introduced by Vedernikov in \cite{j0}.  Recall that a group is called $c$-\emph{supersoluble} ($SC$-group in the terminology  of   Robinson \cite{j1}) if every its chief factor is a simple   group.   The products of $c$-supersoluble groups were studied in  \cite{j3, j9, j10, j11, j4}.
 According to \cite{j8}, a group  $G$ is called  $\mathfrak{J}c$-\emph{supersoluble} if every chief $\mathfrak{J}$-factor of $G$ is a simple group, where $\mathfrak{J}$ is a class of simple groups. In \cite{j8} the same idea was applied for some other classes of groups.  The products and properties of such groups were studied in \cite{j8, j6}. 

In   \cite{j5} the class $\mathfrak{F}_{ca}$ of $ca$-$\mathfrak{F}$-groups was introduced:
\begin{center}
  $\mathfrak{F}_{ca}=(G\,|$  abelian chief factors of $G$ are $\mathfrak{F}$-central and other chief factors are simple groups).
\end{center}
 The class $\mathfrak{F}_{ca}$ was studied in \cite{j5, j7}, where $\mathfrak{F}$ is a saturated formation. In particular it is a composition formation.

In this paper we generalize constructions of quasi-$\mathfrak{F}$-groups, $ ca$-$\mathfrak{F}$-groups, $\mathfrak{J}c$-supersoluble groups and groups  defined by the rank function in the sense of the following definition.

\begin{definition}\label{MD1}
$(1)$ A generalized rank function $\mathcal{R}$ is a map defined on direct products of isomorphic simple groups by

$(a)$ $\mathcal{R}$ associates with  each simple group $S$ a pair $\mathcal{R}(S)=(A_\mathcal{R}(S), B_\mathcal{R}(S))$ of possibly empty disjoint sets $A_\mathcal{R}(S)$ and $B_\mathcal{R}(S)$ of natural numbers.

$(b)$ If $N$ is  the direct products of  simple isomorphic to $S$ groups, then $\mathcal{R}(N)=\mathcal{R}(S)$.

$(2)$ Let $\overline{N}$ be a chief factor of $G$.   We shall say that  a \emph{generalized rank} of $\overline{N}$ in $G$  lies in $\mathcal{R}(\overline{N})$ (briefly $gr(\overline{N}, G)\in \mathcal{R}(\overline{N}))$ if  $r(\overline{N}, G)\in A_\mathcal{R}(\overline{N})$ or $r(\overline{N}, G)\in B_\mathcal{R}(\overline{N})$  and if some  $x\in G$ fixes a composition factor $\overline{H}/\overline{K}$ of $\overline{N}$ (i.e. $\overline{H}^x=\overline{H}$ and $\overline{K}^x=\overline{K}$), then  $x$ induces an inner automorphism on it.

$(3)$ With each generalized rank function $\mathcal{R}$ and a class of groups $\mathfrak{X}$ we associate a class
  $$\mathfrak{X}(\mathcal{R})=(G\,|\,\overline{H}\not\in\mathfrak{X}\textrm{ and }gr(\overline{H}, G)\in \mathcal{R}(\overline{H})\textrm{ for every } \mathfrak{X}\textrm{-eccentric chief factor } \overline{H}\textrm{ of }G)$$
\end{definition}


\begin{example}\label{ex1}
A lot of above mentioned formations can be described with the help of our construction:

\begin{enumerate}
\item Let $\mathfrak{E}=(1)$. Assume that   $\mathcal{R}(H)=(\{1\},\emptyset)$ if  $H$ is abelian and $\mathcal{R}(H)=(\emptyset, \emptyset)$ otherwise. Then $\mathfrak{E}(\mathcal{R})=\mathfrak{U}$.

\item If $\mathcal{R}(H)\equiv(\{1\}, \emptyset)$, then $\mathfrak{E}(\mathcal{R})$ is the class $\mathfrak{U}_c$ of all $c$-supersoluble groups.

\item Let $\mathfrak{J}$ be a class of simple groups.  If $\mathcal{R}(H)\equiv(\{1\}, \emptyset)$ for $H\in\mathfrak{J}$ and $\mathcal{R}(H)=(\mathbb{N}, \emptyset)$ otherwise, then $\mathfrak{E}(\mathcal{R})$ is the class of all $\mathfrak{J}c$-supersoluble groups.

\item Assume that $\mathcal{R}(H)=(A_\mathcal{R}(H), \emptyset)$ if  $H$ is abelian and $\mathcal{R}(H)=(\emptyset, \emptyset)$ otherwise. Then $\mathcal{R}$ is a rank function.

 \item Let    $\mathcal{R}(H)=(\emptyset, \{1\})$ if  $H$ is abelian and $\mathcal{R}(H)=(\emptyset, \emptyset)$ otherwise. Then $\mathfrak{E}(\mathcal{R})=\mathfrak{N}$.

 \item If   $\mathcal{R}(H)\equiv(\emptyset, \{1\})$, then $\mathfrak{E}(\mathcal{R})=\mathfrak{N}^*$.

 \item Assume that   $\mathcal{R}(H)=(\emptyset, \{1\})$ if  $H$ is abelian and $\mathcal{R}(H)=(\{1\}, \emptyset)$ otherwise. Then $\mathfrak{E}(\mathcal{R})=\mathfrak{N}_{ca}$.

\item Let $\mathfrak{N}\subseteq\mathfrak{F}$ be a normally hereditary saturated formation. If   $\mathcal{R}(H)\equiv(\emptyset, \{1\})$, then $\mathfrak{F}(\mathcal{R})=\mathfrak{F}^*$ (see the proof of Corollary \ref{c13}).

\item Let $\mathfrak{F}\subseteq\mathfrak{S}$ be a normally hereditary saturated formation, $\mathcal{R}(H)=(\emptyset, \emptyset)$ for abelian $H\not\in\mathfrak{F}$ and $\mathcal{R}(H)=(\{1\},\emptyset)$ for non-abelian $H$. Then $\mathfrak{F}(\mathcal{R})=\mathfrak{F}_{ca}$.
\end{enumerate}
\end{example}
Recall that a subgroup $U$ of  $G$ is called $\mathfrak{X}$-\emph{maximal} in $G$
provided that $(a)$ $U\in\mathfrak{X}$, and $(b)$ if $U\leq V \leq G$ and $V\in\mathfrak{X}$, then $U = V$ \cite[p. 288]{s8}. The symbol $\mathrm{Int}_\mathfrak{X}(G)$
denotes the intersection of all $\mathfrak{X}$-maximal subgroups of $G$.

Note that the intersection of maximal abelian subgroups of $G$ is the center of $G$. According to     Baer \cite{h1},   the intersection of maximal nilpotent subgroups of $G$ coincides with the hypercenter of $G$.   In \cite[Example 5.17]{h4} it was shown that  the intersection of maximal supersoluble subgroups of $G$ does not necessary coincide with the  supersoluble hypercenter of $G$.
 Shemetkov possed the following question on  Gomel Algebraic seminar in 1995:

\begin{pr}\label{q1}    For what non-empty normally hereditary
solubly saturated formations $\mathfrak{X}$ do the equality
$\mathrm{Int}_\mathfrak{X}(G)=\mathrm{Z}_\mathfrak{X}(G)$ hold for every group $G$?\end{pr}

The solution to this  question for hereditary saturated formations was obtained by  Skiba in   \cite{h4, h5} (for the soluble case, see also  Beidleman  and Heineken \cite{h3}) and for the class of all quasi-$\mathfrak{F}$-groups, where $\mathfrak{F}$ is a hereditary saturated formation, was given  in \cite{ArX}. In particular, the intersection of maximal quasinilpotent subgroups is the quasinilpotent hypercenter. The aim of this paper is to give the answer on this  question for $\mathfrak{F}(\mathcal{R})$.

\section*{Preliminaries}

The notation and terminology agree with the books \cite{s8, s5}. We refer the reader to these
books for the results on formations. Recall that $G^\mathfrak{F}$ is the $\mathfrak{F}$-residual of $G$ for a formation $\mathfrak{F}$;  $G_\mathfrak{S}$ is the soluble radical of  $G$; $\tilde{\mathrm{F}}(G)$ is defined by $\tilde{\mathrm{F}}(G)/\Phi(G)=\mathrm{Soc}(G/\Phi(G))$; $\pi(G)$ is the set of all prime divisors of  $G$;  $\pi(\mathfrak{X})=\underset{G\in\mathfrak{X}}\cup\pi(G)$; $\mathfrak{N}_p\mathfrak{F}=(G\,|\, G/\mathrm{O}_p(G)\in \mathfrak{F})$ is a formation for a formation $\mathfrak{F}$; $G$ is called $s$-critical for $\mathfrak{X}$ if all proper subgroups of $G$ are $\mathfrak{X}$-groups and  $G\not\in\mathfrak{X}$; $\mathrm{Aut}G$, $\mathrm{Inn}G$ and $\mathrm{Out}G$ are respectively groups of all, inner and outer automorphisms of $G$; $E\mathfrak{X}$ is the class of groups all whose composition factors are $\mathfrak{X}$-groups; $N\wr S_n$ is the natural wreath product of $N$ and the symmetric group $S_n$ of degree $n$.

\section{Main Results}

\subsection{The canonical composition definition of $\mathfrak{F}(\mathcal{R})$}

Recall that any nonempty composition formation $\mathfrak{F}$ has an unique composition definition $F$
such that $F(p)=\mathfrak{N}_pF(p)\subseteq\mathfrak{F}$ for all primes $p$ and  $F(0) = \mathfrak{F}$ (see \cite[1, 1.6]{s5}). In this case $F$ is called the \emph{canonical composition definition}  of $\mathfrak{F}$.  We shall say that a generalized rank function $\mathcal{R}$ is (resp. \emph{very}) \emph{good} if for any simple group $S$ holds:

 $(a)$ from $a\in A_\mathcal{R}(S)$ it follows that $b\in A_\mathcal{R}(S)$ for any natural $b|a$ (resp. $b\leq a$);

$(b)$ from $a\in B_\mathcal{R}(S)$ it follows that $b\in A_\mathcal{R}(S)\cup B_\mathcal{R}(S)$  (resp.$b\in B_\mathcal{R}(S)$) for any natural $b|a$ (resp. $b\leq a)$.

\begin{theorem}\label{MT1}
  Let $\mathfrak{N}\subseteq\mathfrak{F}$ be a composition formation with the canonical composition definition $F$ and $\mathcal{R}$ be a generalized rank function.  Then

  $(1)$  $\mathfrak{F}(\mathcal{R})$  is a composition formation with the canonical composition definition $F_\mathcal{R}$ such that $F_{\mathcal{R}}(0)=\mathfrak{F}(\mathcal{R})$ and $F_{\mathcal{R}}(p)=F(p)$ for all $p\in\mathbb{P}$.

  $(2)$ If $\mathfrak{F}$ is normally hereditary and $\mathcal{R}$ is good, then $\mathfrak{F}(\mathcal{R})$ is normally hereditary.

\end{theorem}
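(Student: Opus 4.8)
The plan is to prove the two parts separately; in each case the mechanism is the standard correspondence between the chief factors of a group and those of a quotient (for (1)) or of a normal subgroup (for (2)).

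\emph{Part (1).} First I would check that $\mathfrak{F}(\mathcal{R})$ is a formation. Closure under homomorphic images is immediate: the chief factors of $G/N$ are chief factors of $G$, and whether a chief factor $\overline{H}$ is $\mathfrak{F}$-eccentric, its rank $r(\overline{H},\cdot)$, and the inner-automorphism clause all depend only on the action of $G/C_G(\overline{H})$ on $\overline{H}$, which is unchanged on passing to $G/N$. Closure under subdirect products follows from the familiar fact that every chief factor of $G/(M\cap N)$, together with the action of $G$ on it, is isomorphic to a chief factor of $G/M$ or of $G/N$, so the three defining conditions transfer. Since $\mathfrak{F}$ is a composition formation we have $\mathfrak{F}=(G\,|\,Z_\mathfrak{F}(G)=G)$ by \cite[1, 2.6]{s5}, so a group in $\mathfrak{F}$ has no $\mathfrak{F}$-eccentric chief factor; hence $\mathfrak{F}\subseteq\mathfrak{F}(\mathcal{R})$, and in particular $F(p)=\mathfrak{N}_pF(p)\subseteq\mathfrak{F}\subseteq\mathfrak{F}(\mathcal{R})$ for every prime $p$. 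Consequently $F_\mathcal{R}(0)=\mathfrak{F}(\mathcal{R})$, $F_\mathcal{R}(p)=F(p)$ is a composition definition with $F_\mathcal{R}(p)=\mathfrak{N}_pF_\mathcal{R}(p)\subseteq F_\mathcal{R}(0)$, and by uniqueness of the canonical composition definition it suffices to show that the composition formation $\mathrm{CF}(F_\mathcal{R})$ defined by $F_\mathcal{R}$ equals $\mathfrak{F}(\mathcal{R})$.

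For $\mathfrak{F}(\mathcal{R})\subseteq\mathrm{CF}(F_\mathcal{R})$: if $G\in\mathfrak{F}(\mathcal{R})$ then $G/G_\mathfrak{S}\in\mathfrak{F}(\mathcal{R})=F_\mathcal{R}(0)$, and for an abelian $p$-chief factor $\overline{H}$ one has $\overline{H}\in\mathfrak{N}\subseteq\mathfrak{F}$, so $\overline{H}$ cannot be $\mathfrak{F}$-eccentric (that would require $\overline{H}\notin\mathfrak{F}$); being $\mathfrak{F}$-central it gives $G/C_G(\overline{H})\in F(p)=F_\mathcal{R}(p)$ by the standard description of $\mathfrak{F}$-centrality of abelian chief factors via the canonical definition. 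Conversely let $G\in\mathrm{CF}(F_\mathcal{R})$ and let $\overline{H}=H/K$ be an $\mathfrak{F}$-eccentric chief factor of $G$. It is non-abelian, for an abelian $p$-chief factor satisfies $G/C_G(\overline{H})\in F(p)$ and is therefore $\mathfrak{F}$-central. For a non-abelian chief factor $H/K$ one has $[H,G_\mathfrak{S}]\le H\cap G_\mathfrak{S}\le K$ (the last inequality since $H/K$ is non-soluble), so $G_\mathfrak{S}\le C_G(\overline{H})$ and $HG_\mathfrak{S}/KG_\mathfrak{S}$ is a chief factor of $G/G_\mathfrak{S}$ on which $G/G_\mathfrak{S}$ acts exactly as $G$ acts on $\overline{H}$; it is $\mathfrak{F}$-eccentric in $G/G_\mathfrak{S}\in\mathfrak{F}(\mathcal{R})$, hence $\overline{H}\cong HG_\mathfrak{S}/KG_\mathfrak{S}\notin\mathfrak{F}$ and $gr(HG_\mathfrak{S}/KG_\mathfrak{S},G/G_\mathfrak{S})\in\mathcal{R}(\overline{H})$, and both conditions translate back to $\overline{H}\notin\mathfrak{F}$ and $gr(\overline{H},G)\in\mathcal{R}(\overline{H})$. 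Thus $\mathrm{CF}(F_\mathcal{R})=\mathfrak{F}(\mathcal{R})$, which is therefore a composition formation with canonical composition definition $F_\mathcal{R}$.

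\emph{Part (2).} Suppose $\mathfrak{F}$ is normally hereditary and $\mathcal{R}$ is good, let $N\trianglelefteq G\in\mathfrak{F}(\mathcal{R})$, and let $\overline{H}=H/K$ be an $\mathfrak{F}$-eccentric chief factor of $N$. By the standard reduction for normal subgroups choose $A,B\trianglelefteq G$ with $B\le K<H\le A\le N$ such that $A/B$ is a chief factor of $G$ which, regarded as an operator group for $N$, is an internal direct product $A/B=V_1\times\dots\times V_s$ of $N$-invariant, $N$-irreducible subgroups that $G$ permutes transitively, and (by the Jordan--H\"older theorem for $N$-operator groups applied to the section $(H/B)/(K/B)$) with $\overline{H}$ $N$-isomorphic to $V_1$. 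Then $A/B$ is a direct product of $s$ copies of $\overline{H}$; writing $S$ for the common simple composition factor, $\mathcal{R}(A/B)=\mathcal{R}(\overline{H})$ and $r(\overline{H},N)$ divides $r(A/B,G)$. The key step is that $A/B$ is $\mathfrak{F}$-eccentric in $G$: were $A/B$ $\mathfrak{F}$-central, then $A/B\rtimes G/C_G(A/B)\in\mathfrak{F}$, and normal heredity of $\mathfrak{F}$ gives $A/B\rtimes NC_G(A/B)/C_G(A/B)\cong A/B\rtimes N/C_N(A/B)\in\mathfrak{F}$; factoring out the $N$-invariant direct factor $V_2\times\dots\times V_s$ and then the kernel of $N$ on $V_1$ yields the homomorphic image $V_1\rtimes N/C_N(V_1)\in\mathfrak{F}$, so $V_1$, and hence $\overline{H}$, would be $\mathfrak{F}$-central in $N$, a contradiction. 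As $G\in\mathfrak{F}(\mathcal{R})$ it follows that $A/B\notin\mathfrak{F}$ and $gr(A/B,G)\in\mathcal{R}(A/B)=\mathcal{R}(\overline{H})$; then $\overline{H}\notin\mathfrak{F}$ (otherwise the direct power $A/B$ would lie in $\mathfrak{F}$ by $R_0$-closure), and the generalized-rank condition transfers using goodness of $\mathcal{R}$: if $r(A/B,G)\in A_\mathcal{R}(S)$ then $r(\overline{H},N)\in A_\mathcal{R}(S)$; if $r(A/B,G)\in B_\mathcal{R}(S)$ then $r(\overline{H},N)\in A_\mathcal{R}(S)\cup B_\mathcal{R}(S)$, and in the latter case every composition factor of $\overline{H}$ is, through the $N$-isomorphism with $V_1\le A/B$, a composition factor of $A/B$ stabilized by the same elements of $N\le G$, hence inherits the inner-automorphism property. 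Therefore $gr(\overline{H},N)\in\mathcal{R}(\overline{H})$, and $N\in\mathfrak{F}(\mathcal{R})$.

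The principal obstacle, in both parts, is controlling the non-abelian chief factors through these correspondences. In (1) this amounts to the observation that a non-abelian chief factor of $G$ is faithfully reproduced in $G/G_\mathfrak{S}$, so that the defining conditions of $\mathfrak{F}(\mathcal{R})$ are exactly matched by the composition definition $F_\mathcal{R}$. In (2) the delicate point is that $\mathfrak{F}$-eccentricity must be pushed \emph{up} from $\overline{H}$ in $N$ to the covering chief factor $A/B$ in $G$ --- this is precisely where normal heredity of $\mathfrak{F}$ and the $N$-direct-factor decomposition $A/B=V_1\times\dots\times V_s$ are needed --- and, in parallel, that the composition factors of $\overline{H}$ must be matched with composition factors of $A/B$ in a way that carries the inner-automorphism clause across.
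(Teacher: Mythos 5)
Your proof is correct and rests on the same ingredients as the paper's: Lemma \ref{l1} to translate $\mathfrak{F}$-centrality through the canonical composition definition, the fact that $G_\mathfrak{S}$ centralizes non-abelian chief factors, the decomposition of a $G$-chief factor inside a normal subgroup $N$ into $N$-chief factors (Clifford's theorem, resp.\ Lemma \ref{l3}), divisibility of the corresponding ranks, and goodness of $\mathcal{R}$. The differences are organizational rather than substantive. In part $(1)$ you verify the two inclusions between $\mathfrak{F}(\mathcal{R})$ and the formation defined by $F_\mathcal{R}$ directly from the composition-definition description (noting $\mathfrak{F}\subseteq\mathfrak{F}(\mathcal{R})$ via Lemma \ref{lz} and then invoking uniqueness of the canonical definition), where the paper runs two minimal-counterexample arguments; both reduce to the same two observations about abelian and non-abelian chief factors. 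In part $(2)$ you transfer $\mathfrak{F}$-eccentricity \emph{upward} from $H/K$ in $N$ to the covering $G$-chief factor $A/B$, using normal heredity inside the semidirect product $A/B\rtimes G/C_G(A/B)$; this is precisely the contrapositive of the paper's step of pushing $\mathfrak{F}$-centrality of the covering factor \emph{down} to $H/K$ via normal heredity of $F(p)$ and of $\mathfrak{F}$. Your choice to split on eccentricity of $H/K$ in $N$, rather than on whether the covering factor lies in $\mathfrak{F}$, is in fact slightly tidier: it disposes automatically of abelian $\mathfrak{F}$-eccentric factors of $N$ (whose covering factor would have to be an abelian non-$\mathfrak{F}$-group, impossible since $\mathfrak{N}\subseteq\mathfrak{F}$) and of covering factors that are $\mathfrak{F}$-central without belonging to $\mathfrak{F}$, a case the paper's dichotomy handles only implicitly.
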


\begin{corollary}[{\cite[Theorem 1]{j3}}]\label{c11}
  $\mathfrak{U}_c$ is a composition formation with the canonical composition definition  $h$ such that $h(p)=\mathfrak{N}_p\mathfrak{A}(p-1)$ for every prime $p$ and $h(0)=\mathfrak{U}_c$.
\end{corollary}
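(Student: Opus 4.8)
The plan is to write $\mathfrak{U}_c$ in the form $\mathfrak{F}(\mathcal{R})$ with $\mathfrak{N}\subseteq\mathfrak{F}$, so that Theorem \ref{MT1}$(1)$ becomes applicable, and then to compute the canonical composition definition of $\mathfrak{F}$. Although Example \ref{ex1}$(2)$ already exhibits $\mathfrak{U}_c=\mathfrak{E}(\mathcal{R})$, the choice $\mathfrak{F}=\mathfrak{E}=(1)$ violates the hypothesis $\mathfrak{N}\subseteq\mathfrak{F}$ of Theorem \ref{MT1}, so I would instead take $\mathfrak{F}=\mathfrak{U}$, which satisfies $\mathfrak{N}\subseteq\mathfrak{U}$ and is a composition formation (being saturated), together with the generalized rank function $\mathcal{R}$ defined by $\mathcal{R}(H)=(\emptyset,\emptyset)$ for abelian simple $H$ and $\mathcal{R}(H)=(\{1\},\emptyset)$ for non-abelian simple $H$.

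The first step is to check $\mathfrak{U}(\mathcal{R})=\mathfrak{U}_c$ by classifying the $\mathfrak{U}$-eccentric chief factors. A chief factor $\overline{H}\cong C_p$ is always $\mathfrak{U}$-central, since $G/C_G(\overline{H})$ embeds in $\mathrm{Aut}(C_p)\cong C_{p-1}$ and hence $\overline{H}\rtimes G/C_G(\overline{H})$ is supersoluble; such factors therefore impose no condition. Consequently the $\mathfrak{U}$-eccentric chief factors are precisely the abelian ones of order $p^n$ with $n\geq2$ together with all non-abelian ones. For an abelian eccentric factor one has $\mathcal{R}(\overline{H})=(\emptyset,\emptyset)$, so $gr(\overline{H},G)\in\mathcal{R}(\overline{H})$ is impossible; thus membership in $\mathfrak{U}(\mathcal{R})$ forces every abelian chief factor to be simple. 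For a non-abelian eccentric factor $\overline{H}\cong S^n$ the condition $gr(\overline{H},G)\in\mathcal{R}(S)=(\{1\},\emptyset)$ reads $r(\overline{H},G)=n\in\{1\}$, i.e.\ $\overline{H}$ is simple. Matching both cases against the definition of $c$-supersolubility gives $\mathfrak{U}(\mathcal{R})=\mathfrak{U}_c$.

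With this in hand, Theorem \ref{MT1}$(1)$ yields that $\mathfrak{U}_c=\mathfrak{U}(\mathcal{R})$ is a composition formation whose canonical composition definition $F_\mathcal{R}$ satisfies $F_\mathcal{R}(0)=\mathfrak{U}_c$ and $F_\mathcal{R}(p)=F(p)$ for every prime $p$, where $F$ denotes the canonical composition definition of $\mathfrak{U}$. It remains to identify $F$, and this is the step I expect to be the main obstacle. I would verify that the composition definition $F'$ given by $F'(0)=\mathfrak{U}$ and $F'(p)=\mathfrak{N}_p\mathfrak{A}(p-1)$ is canonical and defines $\mathfrak{U}$. The equality $F'(p)=\mathfrak{N}_pF'(p)$ is immediate from $\mathrm{O}_p(G/\mathrm{O}_p(G))=1$. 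Both the containment $\mathfrak{N}_p\mathfrak{A}(p-1)\subseteq\mathfrak{U}$ and the defining property rest on the same observation: if $\overline{H}$ is an abelian $p$-chief factor of $G$ with $G/C_G(\overline{H})\in\mathfrak{N}_p\mathfrak{A}(p-1)$, then $\mathrm{O}_p(G/C_G(\overline{H}))$ acts trivially on $\overline{H}$, because a normal $p$-subgroup has nonzero fixed points on the irreducible $\mathbb{F}_p[G/C_G(\overline{H})]$-module $\overline{H}$ and irreducibility forces these fixed points to exhaust $\overline{H}$; since the action is faithful this makes $G/C_G(\overline{H})$ abelian of exponent dividing $p-1$, hence simultaneously diagonalizable over $\mathbb{F}_p$, so that $|\overline{H}|=p$. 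Thus every $G\in\mathfrak{F}'$ has all abelian chief factors of prime order, and it is soluble since $G/G_\mathfrak{S}\in\mathfrak{U}\subseteq\mathfrak{S}$ forces $G=G_\mathfrak{S}$; hence $G\in\mathfrak{U}$, the reverse inclusion being routine. By the uniqueness of the canonical composition definition $F=F'$, and therefore $h=F_\mathcal{R}$ satisfies $h(0)=\mathfrak{U}_c$ and $h(p)=\mathfrak{N}_p\mathfrak{A}(p-1)$, as claimed.
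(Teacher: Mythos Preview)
Your proof is correct and follows essentially the same route as the paper: write $\mathfrak{U}_c=\mathfrak{U}(\mathcal{R})$ for a suitable generalized rank function and apply Theorem~\ref{MT1}(1). The only differences are cosmetic: the paper takes $\mathcal{R}(H)\equiv(\{1\},\emptyset)$ for all $H$ (which yields the same $\mathfrak{U}(\mathcal{R})$ since the clause $\overline{H}\notin\mathfrak{U}$ already disposes of abelian eccentric factors), and it simply cites $F(p)=\mathfrak{N}_p\mathfrak{A}(p-1)$ as the well-known canonical local definition of $\mathfrak{U}$ together with the general passage from canonical local to canonical composition definitions, rather than verifying this from scratch as you do.
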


In  \cite{V1} the class $w\mathfrak{U}$ of widely supersoluble groups was introduced. It is a hereditary saturated formation of soluble groups. Recall \cite{j4} that a group  is called widely  $c$-supersoluble if its abelian chief factors are $w\mathfrak{U}$-central and other chief factors are simple groups.

\begin{corollary}[{\cite[Theorem A]{j4}}]\label{c12} The class
  $\mathfrak{U}_{cw}$ of widely  $c$-supersoluble groups is a normally hereditary composition formation with the canonical composition definition  $h$ such that $h(p)=\mathfrak{N}_p(G | G\in w\mathfrak{U}\cap \mathfrak{N}_p\mathcal{A}(p-1))$ for every prime $p$ and $h(0)=\mathfrak{U}_{cw}$.
\end{corollary}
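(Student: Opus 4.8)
\emph{Proof proposal.} The plan is to obtain Corollary~\ref{c12} from Theorem~\ref{MT1} in exactly the way Corollary~\ref{c11} is obtained from it, now taking $\mathfrak{F}=w\mathfrak{U}$. First I would check that
\[
\mathfrak{U}_{cw}=w\mathfrak{U}(\mathcal{R}),\qquad\text{where }\mathcal{R}(H)\equiv(\{1\},\emptyset).
\]
Since $\mathfrak{N}\subseteq\mathfrak{U}\subseteq w\mathfrak{U}$, every abelian chief factor of a group (being an elementary abelian, hence supersoluble, group) lies in $w\mathfrak{U}$; thus the requirement ``$\overline H\notin w\mathfrak{U}$'' in Definition~\ref{MD1}(3) rules out all $w\mathfrak{U}$-eccentric abelian chief factors, which is precisely the condition that the abelian chief factors of a group in $\mathfrak{U}_{cw}$ be $w\mathfrak{U}$-central. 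If $\overline H$ is a non-abelian chief factor of $G$, then $\overline H\notin w\mathfrak{U}$ holds automatically (as $w\mathfrak{U}\subseteq\mathfrak{S}$) and $\overline H$ is always $w\mathfrak{U}$-eccentric, so the remaining requirement $gr(\overline H,G)\in\mathcal{R}(\overline H)=(\{1\},\emptyset)$ reduces to $r(\overline H,G)=1$, i.e.\ $\overline H$ is simple (because $B_{\mathcal{R}}(\overline H)=\emptyset$, the ``inner automorphism'' clause of Definition~\ref{MD1}(2) is vacuous here). Hence $w\mathfrak{U}(\mathcal{R})$ is the class of all groups whose abelian chief factors are $w\mathfrak{U}$-central and whose non-abelian chief factors are simple, i.e.\ $\mathfrak{U}_{cw}$. (This is the instance $\mathfrak{F}=w\mathfrak{U}$ of Example~\ref{ex1}(9).)

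Next I would verify the hypotheses of Theorem~\ref{MT1}. The generalized rank function $\mathcal{R}\equiv(\{1\},\emptyset)$ is good: from $1\in A_{\mathcal{R}}(S)$ and $b\mid 1$ one gets $b=1\in A_{\mathcal{R}}(S)$, while $B_{\mathcal{R}}(S)=\emptyset$ makes part~$(b)$ of the definition of ``good'' trivial. By \cite{V1} the class $w\mathfrak{U}$ is a hereditary --- in particular normally hereditary --- saturated formation of soluble groups, so it is a normally hereditary composition formation, and $\mathfrak{N}\subseteq w\mathfrak{U}$. Therefore Theorem~\ref{MT1}(1),(2) applies with $\mathfrak{F}=w\mathfrak{U}$, yielding that $\mathfrak{U}_{cw}=w\mathfrak{U}(\mathcal{R})$ is a normally hereditary composition formation whose canonical composition definition $h:=F_{\mathcal{R}}$ satisfies $h(0)=\mathfrak{U}_{cw}$ and $h(p)=F(p)$ for every prime $p$, where $F$ is the canonical composition definition of $w\mathfrak{U}$.

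To finish I would identify $F(p)$. Since $w\mathfrak{U}\subseteq\mathfrak{S}$, every chief factor of a $w\mathfrak{U}$-group is abelian; consequently, if $f$ denotes the canonical local definition of $w\mathfrak{U}$, then the composition definition $F'$ given by $F'(0)=w\mathfrak{U}$, $F'(p)=f(p)$ has the canonical shape ($F'(p)=\mathfrak{N}_pF'(p)\subseteq w\mathfrak{U}$) and satisfies $BF(F')=w\mathfrak{U}$ --- indeed any $G\in BF(F')$ has $G/G_{\mathfrak{S}}\in w\mathfrak{U}\subseteq\mathfrak{S}$, hence $G/G_{\mathfrak{S}}=1$, $G$ is soluble, and then the defining conditions coincide with those of the local definition $f$. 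By uniqueness of the canonical composition definition, $F=F'$, so $F(p)=f(p)$. Finally, by \cite{V1} (see also \cite[Theorem A]{j4}) the canonical local definition of $w\mathfrak{U}$ is $f(p)=\mathfrak{N}_p(G\mid G\in w\mathfrak{U}\cap\mathfrak{N}_p\mathcal{A}(p-1))$, and substituting this gives $h(0)=\mathfrak{U}_{cw}$ and $h(p)=\mathfrak{N}_p(G\mid G\in w\mathfrak{U}\cap\mathfrak{N}_p\mathcal{A}(p-1))$, as required.

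The step I expect to be the main obstacle is the last one: pinning down the canonical local definition of $w\mathfrak{U}$ in precisely the normalized form stated. If the literature does not record it in this shape, one must compute it by hand --- for a $w$-supersoluble group $G$ and an abelian $p$-chief factor $H/K$, translating the $\mathbb{P}$-subnormality of the Sylow subgroups of $H/K\rtimes G/C_G(H/K)$ into membership of $G/C_G(H/K)$ in $\mathfrak{N}_p\mathcal{A}(p-1)$, conversely checking that every group of $\mathfrak{N}_p(G\mid G\in w\mathfrak{U}\cap\mathfrak{N}_p\mathcal{A}(p-1))$ occurs as such a quotient, and then normalizing using $f(p)\subseteq w\mathfrak{U}$ and $\mathfrak{N}_p\mathfrak{N}_p\mathfrak{X}=\mathfrak{N}_p\mathfrak{X}$. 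By contrast, the identity $\mathfrak{U}_{cw}=w\mathfrak{U}(\mathcal{R})$ and the verification that $\mathcal{R}$ is good are routine once Definition~\ref{MD1} is unwound.
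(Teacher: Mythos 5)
Your proposal is correct and follows essentially the same route as the paper: take $\mathcal{R}(H)\equiv(\{1\},\emptyset)$, check it is good, identify $\mathfrak{U}_{cw}=w\mathfrak{U}(\mathcal{R})$, recall from \cite{j4} that $w\mathfrak{U}$ is a hereditary saturated (hence composition) formation with the stated canonical local definition, and apply Theorem~\ref{MT1} together with the fact that for a local formation the canonical composition definition agrees with the canonical local definition on primes. The only difference is that you spell out the verification of $\mathfrak{U}_{cw}=w\mathfrak{U}(\mathcal{R})$ and the local-to-composition translation (for $w\mathfrak{U}\subseteq\mathfrak{S}$) in more detail than the paper, which simply cites \cite[Lemma 3.2]{j4} and states these facts as known.
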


\begin{corollary}[{\cite[Theorem 2.6]{sk2}}]\label{c13}
 For every saturated formation $\mathfrak{F}$ containing all nilpotent groups with the canonical local definition $F$, the formation $\mathfrak{F}^*$ is composition with the canonical composition definition $F^*$ where  $F^*(p)=F(p)$ for every prime $p$ and $F^*(0)=\mathfrak{F}^*$. Moreover, if the formation $\mathfrak{F}$  is normally hereditary, then $\mathfrak{F}^*$ is also normally hereditary.
\end{corollary}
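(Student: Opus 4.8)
The plan is to prove (1) by identifying $\mathfrak{F}(\mathcal{R})$ with the composition formation $\mathfrak{M}$ whose composition definition $f$ is $f(0)=\mathfrak{F}(\mathcal{R})$, $f(p)=F(p)$, and then to deduce (2) from the way chief factors of a normal subgroup sit inside chief factors of the whole group.

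First I would check straight from Definition~\ref{MD1} that $\mathfrak{F}(\mathcal{R})$ is a formation: for homomorphic images one uses that a chief factor of $G/N$ is, as a $G/N$-group, $G$-isomorphic to a chief factor of $G$ above $N$, and that $\mathfrak{F}$-eccentricity, the rank $r(\cdot,\cdot)$, and the inner-automorphism clause of ``$gr(\cdot,\cdot)\in\mathcal{R}(\cdot)$'' depend only on the $G$-isomorphism type of a chief factor; for subdirect products one invokes the standard fact that if $M\cap N=1$ then every chief factor of $G$ is $G$-isomorphic to a chief factor of $G/M$ or of $G/N$, and transfers the two defining conditions back. Next comes the key remark, which is where $\mathfrak{N}\subseteq\mathfrak{F}$ enters: an abelian chief factor of a group in $\mathfrak{F}(\mathcal{R})$ is never $\mathfrak{F}$-eccentric, since it lies in $\mathfrak{N}\subseteq\mathfrak{F}$. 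Combined with the characterization ``$\overline{H}$ abelian $p$-chief factor of $X$ is $\mathfrak{F}$-central $\iff X/C_X(\overline{H})\in F(p)$'' — which holds because $F$ is the canonical composition definition of $\mathfrak{F}$, so $F(p)=\mathfrak{N}_pF(p)\subseteq\mathfrak{F}$ — this yields $\mathfrak{F}(\mathcal{R})\subseteq\mathfrak{M}$, using $G/G_\mathfrak{S}\in\mathfrak{F}(\mathcal{R})$ from the homomorphism closure just proved. For $\mathfrak{M}\subseteq\mathfrak{F}(\mathcal{R})$, a group $G\in\mathfrak{M}$ again has all abelian chief factors $\mathfrak{F}$-central, so any $\mathfrak{F}$-eccentric chief factor of $G$ is non-abelian, hence $G$-isomorphic to a chief factor of $G/G_\mathfrak{S}\in\mathfrak{F}(\mathcal{R})$, and the two conditions transfer back. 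Finally $f(0)=\mathfrak{F}(\mathcal{R})=\mathfrak{M}$, $f(p)=F(p)=\mathfrak{N}_pf(p)$, and $F(p)\subseteq\mathfrak{F}\subseteq\mathfrak{F}(\mathcal{R})$ because a group in $\mathfrak{F}=(G\,|\,Z_\mathfrak{F}(G)=G)$ has no $\mathfrak{F}$-eccentric chief factors at all; by the uniqueness of the canonical composition definition, $f=F_\mathcal{R}$ has the asserted shape, which proves (1).

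For (2), let $G\in\mathfrak{F}(\mathcal{R})$, $N\trianglelefteq G$, and let $H/K$ be an $\mathfrak{F}$-eccentric chief factor of $N$. Refining a $G$-chief series through $N$ to an $N$-chief series and applying the Jordan--H\"older theorem, I would produce a $G$-chief factor $A/B$ with $B<A\le N$ such that $H/K$ is $N$-isomorphic to a minimal $N$-invariant normal subgroup $C'$ of $A/B$. Since $N\trianglelefteq G$, the group $A/B$ is a direct product of $G$-conjugate minimal $N$-invariant subgroups (Clifford's theorem when $A/B$ is abelian; the $N$-orbit products of the simple direct factors when $A/B$ is non-abelian); in particular $C'$ has an $N$-invariant complement $C''$ in $A/B$, these conjugate pieces all have the same ``size'', so $r(H/K,N)$ divides $r(A/B,G)$, and $H/K$ has the same simple composition factor $S$ as $A/B$. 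If $A/B$ were $\mathfrak{F}$-central in $G$, then $A/B\rtimes N/C_N(A/B)$, being a normal subgroup of $A/B\rtimes G/C_G(A/B)\in\mathfrak{F}$, would lie in $\mathfrak{F}$ by normal heredity, and factoring out $C''$ would give $H/K\rtimes N/C_N(H/K)\in\mathfrak{F}$, contradicting the $\mathfrak{F}$-eccentricity of $H/K$; hence $A/B$ is $\mathfrak{F}$-eccentric in $G$.

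Applying $G\in\mathfrak{F}(\mathcal{R})$ to $A/B$ now gives $A/B\notin\mathfrak{F}$, whence $S\notin\mathfrak{F}$ and so $H/K\cong S^{\,r(H/K,N)}\notin\mathfrak{F}$ by closure of formations under finite direct products and quotients (and, as in (1), $A/B$ and $H/K$ are non-abelian); moreover $gr(A/B,G)\in\mathcal{R}(S)$. Writing $n=r(A/B,G)$ and $d=r(H/K,N)$ with $d\mid n$: if $n\in A_\mathcal{R}(S)$, goodness forces $d\in A_\mathcal{R}(S)$; if $n\in B_\mathcal{R}(S)$ together with the inner-automorphism clause for $A/B$ in $G$, goodness forces $d\in A_\mathcal{R}(S)\cup B_\mathcal{R}(S)$, and in the subcase $d\in B_\mathcal{R}(S)$ the inner-automorphism clause for $H/K$ in $N$ follows because a composition factor of $C'\trianglelefteq A/B$ is a composition factor of $A/B$ and $N\le G$. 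In every case $gr(H/K,N)\in\mathcal{R}(H/K)$, so $N\in\mathfrak{F}(\mathcal{R})$. The step I expect to be the main obstacle is precisely the module-theoretic bookkeeping of the preceding paragraph: locating a $G$-chief factor of the same simple type that ``contains'' $H/K$, controlling its rank through the orbit structure of the normal subgroup $N$ so that exactly the divisibility form in the definition of ``good'' is what is needed, and checking that $\mathfrak{F}$-centrality descends from $A/B$ in $G$ to $H/K$ in $N$ using only normal heredity of $\mathfrak{F}$ and the $N$-invariant complement.
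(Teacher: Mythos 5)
Your argument is, in substance, a proof of Theorem~\ref{MT1} (and essentially a correct one: the direct two-inclusion identification of $\mathfrak{F}(\mathcal{R})$ with $CLF(f)$, and the Clifford-type orbit bookkeeping for normal heredity, both work and run parallel to the paper's minimal-counterexample and chief-series arguments). But the statement you were asked to prove is Corollary~\ref{c13}, which concerns the class $\mathfrak{F}^*$ of quasi-$\mathfrak{F}$-groups, and your proposal never mentions $\mathfrak{F}^*$, never chooses a particular generalized rank function, and never shows that $\mathfrak{F}^*$ has the form $\mathfrak{F}(\mathcal{R})$. That identification is precisely what the paper's proof of the corollary supplies, and it is not a formality. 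Taking $\mathcal{R}(H)\equiv(\emptyset,\{1\})$, the inclusion $\mathfrak{F}(\mathcal{R})\subseteq\mathfrak{F}^*$ is easy (rank $1$ together with the inner-automorphism clause on the unique composition factor is exactly the quasi-$\mathfrak{F}$ condition), but the reverse inclusion requires showing that an $\mathfrak{F}$-eccentric chief factor $\overline{H}$ of an $\mathfrak{F}^*$-group automatically satisfies $\overline{H}\notin\mathfrak{F}$ --- a condition that Definition~\ref{MD1}(3) imposes but the definition of $\mathfrak{F}^*$ does not. The paper's argument for this: if $\overline{H}\in\mathfrak{F}$ were $\mathfrak{F}$-eccentric, then the fact that every element of $G$ induces an inner automorphism on $\overline{H}$ forces $r(\overline{H},G)=1$ and realizes $\overline{H}\rtimes G/C_G(\overline{H})$ as a quotient of $\overline{H}\times\overline{H}\in\mathfrak{F}$, so $\overline{H}$ would be $\mathfrak{F}$-central, a contradiction.

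Two smaller omissions in the same vein: you should record that $\mathcal{R}(H)\equiv(\emptyset,\{1\})$ is good (trivially so, since the only member of $B_\mathcal{R}(H)$ is $1$), which is the hypothesis your part (2) needs; and the corollary is phrased in terms of the canonical \emph{local} definition $F$ of the saturated formation $\mathfrak{F}$, so you also need the standard remark that the canonical composition definition of a local formation is obtained from its canonical local definition by setting $D(0)=\mathfrak{F}$ and $D(p)=F(p)$ for all $p$. With these three points added, your general argument does yield the corollary.
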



In the proof of Theorem \ref{MT1} we will need the following lemmas:

\begin{lemma}\label{Lgr}
  Let $H/K$ and $M/N$ be $G$-isomorphic chief factors of $G$.

  $(a)$ Then they have the same generalized rank.

  $(b)$ \cite[1, 1.14]{s5} $H/K\rtimes G/C_G(H/K)\simeq M/N\rtimes G/C_G(M/N)$.
\end{lemma}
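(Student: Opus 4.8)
The plan is to treat part $(b)$ as already in hand, since it is quoted verbatim from \cite[1,1.14]{s5}, and to concentrate on part $(a)$, which I would argue directly from the given $G$-isomorphism rather than through the semidirect products. Fix a $G$-isomorphism $\phi\colon H/K\to M/N$, that is, an isomorphism of groups with $\phi(u^g)=\phi(u)^g$ for all $u\in H/K$ and $g\in G$. First I would harvest the cheap consequences: being in particular an abstract isomorphism, $\phi$ shows that $H/K$ and $M/N$ are direct products of the same number of copies of the same simple group $S$, whence $r(H/K,G)=r(M/N,G)$ and $\mathcal{R}(H/K)=\mathcal{R}(M/N)$ (so $A_\mathcal{R}$ and $B_\mathcal{R}$ agree for the two factors). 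The $G$-equivariance also gives $C_G(H/K)=C_G(M/N)$, since $g$ centralizes $H/K$ iff $u^g=u$ for all $u$, iff $\phi(u)^g=\phi(u)$ for all $u$, iff $g$ centralizes $M/N$ by surjectivity of $\phi$.

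In view of the displayed definition of $gr(\overline{N},G)\in\mathcal{R}(\overline{N})$, after the above it remains only to transport the inner-automorphism condition, namely: every $x\in G$ fixing a composition factor $\overline{H}/\overline{K}$ of $H/K$ induces an inner automorphism on it, if and only if the same holds for $M/N$. Here I would use that an isomorphism carries composition series to composition series, so $\phi$ sends a composition factor $\overline{H}/\overline{K}$ of $H/K$ to the composition factor $\phi(\overline{H})/\phi(\overline{K})$ of $M/N$, and this is a bijection between the composition factors of the two modules; moreover $\overline{H}^{\,x}=\overline{H}$ and $\overline{K}^{\,x}=\overline{K}$ hold iff $\phi(\overline{H})^{x}=\phi(\overline{H})$ and $\phi(\overline{K})^{x}=\phi(\overline{K})$, again by $G$-equivariance, so $x$-invariance matches up on both sides.

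The only point requiring care is that the induced automorphisms correspond under $\phi$ in a way that preserves innerness. When $x$ fixes $\overline{H}$ and $\overline{K}$ it induces an automorphism $\alpha$ of $\overline{H}/\overline{K}\cong S$; the restriction $\bar\phi\colon\overline{H}/\overline{K}\to\phi(\overline{H})/\phi(\overline{K})$ is an isomorphism, and the identity $\phi(u^x)=\phi(u)^x$ yields the commuting relation $\bar\phi\circ\alpha=\beta\circ\bar\phi$, where $\beta$ is the automorphism induced by $x$ on $\phi(\overline{H})/\phi(\overline{K})$. Hence $\beta=\bar\phi\,\alpha\,\bar\phi^{-1}$, and since conjugation by an isomorphism sends the inner automorphism effected by $s$ to the inner automorphism effected by $\bar\phi(s)$, I conclude that $\alpha$ is inner iff $\beta$ is inner. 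Running this in both directions shows the inner-automorphism condition for $H/K$ holds exactly when it holds for $M/N$.

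Combining the three ingredients — equal rank, equal value of $\mathcal{R}$, and equivalence of the inner-automorphism conditions — with the definition of $gr$, I obtain that $gr(H/K,G)\in\mathcal{R}(H/K)$ iff $gr(M/N,G)\in\mathcal{R}(M/N)$, i.e.\ the two $G$-isomorphic factors have the same generalized rank, which is $(a)$. I expect no genuine obstacle; the sole delicate step is the bookkeeping of the last paragraph, making sure that ``inner automorphism of a composition factor'' transports correctly along the $G$-isomorphism, and the commuting-square identity $\beta=\bar\phi\,\alpha\,\bar\phi^{-1}$ is exactly what settles it.
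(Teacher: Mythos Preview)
Your proof is correct and follows essentially the same approach as the paper's: both fix a $G$-isomorphism, note that the ordinary rank and hence $\mathcal{R}$-value agree, transport composition factors and their $x$-invariance via $G$-equivariance, and then check that the automorphism induced by $x$ on a corresponding composition factor is inner precisely when the original one is (the paper states this last step tersely as ``$x$ induces an inner automorphism $\alpha(aB)$'', whereas you spell out the conjugation identity $\beta=\bar\phi\,\alpha\,\bar\phi^{-1}$). Your additional remark that $C_G(H/K)=C_G(M/N)$ is correct but not needed for part~$(a)$.
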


\begin{proof}
  Let $\alpha: H/K\to M/N$ be a $G$-isomorphism.  Since $H/K$ and $M/N$  are isomorphic groups, they have the same rank.  Assume that    $x\in G$ fixes a composition factor $A/B$ of $H/K$ and   induces an inner automorphism $aB$ on it. Note that $\alpha(A/B)$ is a composition factor of $M/N$. From $\alpha(A/B)^x=\alpha(A/B^x)=\alpha(A/B)$ it follows that $x$ fixes $\alpha(A/B)$ and it is straightforward to check that $x$ induces an inner  automorphism $\alpha(aB)$ on it. Since $\alpha^{-1}$ is also $G$-isomorphism, we see that the generalized ranks of  $H/K$ and $M/N$  are the same.
\end{proof}

\begin{lemma}[{\cite[1, 1.15]{s5}}]\label{l1}
Let $\overline{H}$ be a chief factor of  $G$. Then

$(1)$ If $\mathfrak{F}$ is a composition formation and $F$ is its canonical composition definition, then $\overline{H}$   is $\mathfrak{F}$-central if and only if $G/C_G(\overline{H})\in F(p)$ for all $p\in\pi(\overline{H})$  in the case when $\overline{H}$ is abelian, and $G/C_G(\overline{H})\in\mathfrak{F}$ when $\overline{H}$ is non-anelian.

$(2)$ If $\mathfrak{F}$ is a local formation and $F$ is its canonical local definition, then $\overline{H}$   is $\mathfrak{F}$-central if and only if $G/C_G(\overline{H})\in F(p)$ for all $p\in\pi(\overline{H})$.
\end{lemma}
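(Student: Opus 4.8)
The plan is to carry the whole argument on the single group $W = \overline{H} \rtimes (G/C_G(\overline{H}))$, writing $A = G/C_G(\overline{H})$ for the faithfully and irreducibly acting complement, so that by definition $\overline{H}$ is $\mathfrak{F}$-central in $G$ exactly when $W \in \mathfrak{F}$. First I would record the chief structure of $W$: since $A$ acts irreducibly, $\overline{H}$ is a minimal normal subgroup of $W$ and $W/\overline{H} \cong A$, so refining $1 \lhd \overline{H} \lhd W$ to a chief series makes $\overline{H}$ one chief factor while every other chief factor lies above $\overline{H}$ and is $W$-isomorphic to a chief factor $\overline{L}/\overline{M}$ of $A$. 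For such a factor one has $\overline{H} \le \overline{M} \le C_W(\overline{L}/\overline{M})$ (because $[\overline{M},\overline{L}]\le\overline{M}$), whence $W/C_W(\overline{L}/\overline{M}) \cong A/C_A(\overline{L}/\overline{M})$. Thus the conditions that the composition (resp. local) definition of $\mathfrak{F}$ imposes on the factors above $\overline{H}$ coincide verbatim with the conditions characterising $A \in \mathfrak{F}$.

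The second preparatory step, which I expect to be the main obstacle, is to pin down $C_W(\overline{H})$ and the quotient $W/C_W(\overline{H})$. When $\overline{H}$ is abelian, faithfulness gives $C_W(\overline{H}) = \overline{H}$, hence $W/C_W(\overline{H}) = W/\overline{H} \cong A$. When $\overline{H}$ is non-abelian I would exploit that, because $\overline{H}=H/K$ is a chief factor, the image of $H$ in $A$ realises $\mathrm{Inn}(\overline{H})$, so $\mathrm{Inn}(\overline{H}) \le A \le \mathrm{Aut}(\overline{H})$. Running the conjugation action $\varphi\colon W \to \mathrm{Aut}(\overline{H})$ through this, $\ker\varphi = C_W(\overline{H})$ and $\mathrm{Im}\,\varphi = \mathrm{Inn}(\overline{H})\cdot A = A$, so again $W/C_W(\overline{H}) \cong A$; moreover $\overline{H}\cap C_W(\overline{H}) = \mathrm{Z}(\overline{H}) = 1$. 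Concretely $C_W(\overline{H})$ is the diagonal copy $\{\,h^{-1}\,\mathrm{inn}_h : h\in\overline{H}\,\}\cong\overline{H}$, and verifying this description is the computational heart of the proof.

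With these two steps both parts follow quickly from the relevant definition together with the defining inclusion $F(p)=\mathfrak{N}_pF(p)\subseteq\mathfrak{F}$ of the canonical definition. For part $(1)$ with $\overline{H}$ abelian (an elementary abelian $p$-group, so $\pi(\overline{H})=\{p\}$): if $W\in\mathfrak{F}$, applying the composition definition to the abelian $p$-chief factor $\overline{H}$ gives $A = W/C_W(\overline{H}) \in F(p)$; conversely $A \in F(p)\subseteq\mathfrak{F}$ lets me verify the composition definition for $W$ directly, since $W/W_{\mathfrak{S}}$ is an image of $W/\overline{H}\cong A$, the factor $\overline{H}$ contributes precisely $A\in F(p)$, and the remaining abelian factors transfer from $A$ by the first step. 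For part $(1)$ with $\overline{H}$ non-abelian: $W\in\mathfrak{F}$ forces $A\cong W/\overline{H}\in\mathfrak{F}$ by quotient-closure, while conversely, having $W/\overline{H}\cong A\in\mathfrak{F}$ and $W/C_W(\overline{H})\cong A\in\mathfrak{F}$ with $\overline{H}\cap C_W(\overline{H})=1$, the residual closure (property $(b)$ of a formation) yields $W = W/(\overline{H}\cap C_W(\overline{H})) \in\mathfrak{F}$.

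Part $(2)$ is the identical argument read through the Gasch\"utz--Lubeseder--Schmid local characterisation: $W\in\mathfrak{F}$ iff $W/C_W(\overline{K})\in F(q)$ for every chief factor $\overline{K}$ and every $q\in\pi(\overline{K})$. The factors above $\overline{H}$ again reproduce exactly the conditions for $A\in\mathfrak{F}$, while $\overline{H}$ itself contributes the conditions $A = W/C_W(\overline{H}) \in F(q)$ for all $q\in\pi(\overline{H})$. Since $F(q)\subseteq\mathfrak{F}$ for the canonical local definition, these already force $A\in\mathfrak{F}$ and hence subsume the conditions from the higher factors; therefore $W\in\mathfrak{F}$ iff $A\in F(q)$ for every $q\in\pi(\overline{H})$, as asserted. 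The point distinguishing the non-abelian case of $(2)$ from that of $(1)$ is that the local definition tests a non-abelian factor prime-by-prime, so here every prime of $\pi(\overline{H})$ genuinely enters.
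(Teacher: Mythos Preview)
The paper does not prove this lemma at all; it is quoted verbatim from \cite[1, 1.15]{s5} and used as a black box throughout, so there is no proof in the paper to compare against. Your argument is correct and is essentially the standard one found in the literature for this result: reduce everything to the single group $W=\overline{H}\rtimes A$ with $A=G/C_G(\overline{H})$, compute $C_W(\overline{H})$ (equal to $\overline{H}$ in the abelian case, the diagonal copy of $\overline{H}$ in the non-abelian case, giving $W/C_W(\overline{H})\cong A$ either way), observe that the chief factors of $W$ above $\overline{H}$ reproduce exactly the chief factors of $A$, and then read off the membership condition $W\in\mathfrak{F}$ from the canonical definition together with $F(p)\subseteq\mathfrak{F}$. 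The formation intersection argument you use in the non-abelian composition case (two normal subgroups $\overline{H}$ and $C_W(\overline{H})$ with trivial intersection and both quotients in $\mathfrak{F}$) is the clean way to close that direction.
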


\begin{lemma}[{\cite[1, 2.6]{s5}}]\label{lz} Let $\mathfrak{F}$ be a solubly saturated formation. Then
 $\mathfrak{F}=(G\,|\, G=\mathrm{Z}_\mathfrak{F}(G))$.
\end{lemma}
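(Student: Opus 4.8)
The plan is to establish both inclusions of the equality $\mathfrak{F}=(G\mid G=\mathrm{Z}_\mathfrak{F}(G))$, working throughout with the canonical composition definition $F$ of $\mathfrak{F}$ (available since a solubly saturated formation is precisely a nonempty composition formation) and with the centrality criterion of Lemma \ref{l1}. Note first that $G=\mathrm{Z}_\mathfrak{F}(G)$ holds if and only if every chief factor of $G$ is $\mathfrak{F}$-central: one direction is immediate from the definition of the hypercenter, and for the other a full chief series of $G$ exhibits $G$ itself as a normal subgroup all of whose chief factors are $\mathfrak{F}$-central.

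The inclusion $\mathfrak{F}\subseteq(G\mid G=\mathrm{Z}_\mathfrak{F}(G))$ is the routine one. Let $\overline H$ be a chief factor of a group $G\in\mathfrak{F}$. Since $C_G(\overline H)\trianglelefteq G$, property $(a)$ of the formation $\mathfrak{F}$ gives $G/C_G(\overline H)\in\mathfrak{F}$ when $\overline H$ is non-abelian, and then $\overline H$ is $\mathfrak{F}$-central by Lemma \ref{l1}(1); when $\overline H$ is an abelian $p$-chief factor, the defining property of the composition formation $\mathfrak{F}$ yields $G/C_G(\overline H)\in F(p)$, so again $\overline H$ is $\mathfrak{F}$-central by Lemma \ref{l1}(1). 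Hence $G=\mathrm{Z}_\mathfrak{F}(G)$.

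For the converse I would induct on $|G|$, assuming that every chief factor of $G$ is $\mathfrak{F}$-central and aiming at $G\in\mathfrak{F}$ (the case $G=1$ being clear as $\mathfrak{F}\neq\emptyset$). A preliminary observation is that $\mathfrak{F}$-centrality of a chief factor is inherited by every quotient $G/N$, $N\trianglelefteq G$, because the chief factors of $G/N$ are the chief factors of $G$ lying over $N$ and both the centralizer and the semidirect product attached to such a factor are unchanged modulo $N$. The crucial point is then to verify the first clause $G/G_\mathfrak{S}\in F(0)=\mathfrak{F}$ of the composition definition. If $G_\mathfrak{S}\neq 1$, then $|G/G_\mathfrak{S}|<|G|$ and every chief factor of $G/G_\mathfrak{S}$ is $\mathfrak{F}$-central, so induction gives $G/G_\mathfrak{S}\in\mathfrak{F}$; together with the second clause $G/C_G(\overline H)\in F(p)$ for each abelian $p$-chief factor $\overline H$ (furnished by Lemma \ref{l1}(1)), the composition definition then yields $G\in\mathfrak{F}$.

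It remains to treat the case $G_\mathfrak{S}=1$, where the first clause is itself the goal so that the composition definition gives no leverage; here every minimal normal subgroup of $G$ is non-abelian. If $G$ has two distinct minimal normal subgroups $N_1,N_2$, then $N_1\cap N_2=1$, induction gives $G/N_1,G/N_2\in\mathfrak{F}$, and property $(b)$ of $\mathfrak{F}$ applied to $G=G/(N_1\cap N_2)$ gives $G\in\mathfrak{F}$. If $G$ is monolithic with unique minimal normal subgroup $N$, then $N$ is non-abelian, so $C_G(N)\cap N=\mathrm{Z}(N)=1$ and uniqueness force $C_G(N)=1$; hence $\mathfrak{F}$-centrality of $N$ reads $N\rtimes G\in\mathfrak{F}$, and factoring out $N$ gives $G\in\mathfrak{F}$ by property $(a)$. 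I expect the real obstacle to be precisely this reduction: a naive induction using only the raw solubly saturated property, that $G/\Phi(G_\mathfrak{S})\in\mathfrak{F}$ implies $G\in\mathfrak{F}$, fails on an abelian minimal normal subgroup $N\leq\Phi(G)$ with $N\not\leq\Phi(G_\mathfrak{S})$ (as for the central subgroup of order $2$ in $\mathrm{SL}(2,5)$, where $\Phi(G_\mathfrak{S})=1$). Passing instead through the first clause of the canonical composition definition reduces everything to $G_\mathfrak{S}=1$, where only non-abelian minimal normal subgroups occur and the centralizer argument closes the induction.
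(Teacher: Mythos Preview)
The paper does not prove this lemma at all: it is stated with a citation to \cite[1, 2.6]{s5} and used as a black box. So there is no proof in the paper to compare against.

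Your argument is correct. The forward inclusion is immediate from Lemma~\ref{l1}(1) and the closure of $\mathfrak{F}$ under quotients, exactly as you write. For the reverse inclusion your induction works: when $G_\mathfrak{S}\neq 1$ you feed $G/G_\mathfrak{S}\in\mathfrak{F}=F(0)$ and the abelian-factor conditions $G/C_G(\overline H)\in F(p)$ straight into the canonical composition definition; when $G_\mathfrak{S}=1$ you split into the non-monolithic case (handled by formation property~$(b)$) and the monolithic case, where $C_G(N)=1$ for the unique non-abelian minimal normal $N$, so $\mathfrak{F}$-centrality reads $N\rtimes G\in\mathfrak{F}$ and taking the quotient by the base gives $G\in\mathfrak{F}$. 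Your closing remark about why one cannot simply use the raw solubly-saturated hypothesis on an abelian minimal normal subgroup is a nice diagnostic touch, though strictly unnecessary once you have committed to the composition-definition route.
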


Recall that $C^p(G)$ is the intersection of the centralizers of all abelian $p$-chief factors of   $G$ ($C^p(G)=G$ if $G$ has no such chief factors). Let $f$ be a composition definition of a composition formation $\mathfrak{F}$. It is known that  $\mathfrak{F}=(G\,|\, G/G_\mathfrak{S}\in f(0)$ and $G/C^p(G)\in f(p)$ for all $p\in\pi(G)$ such that $G$ has an abelian $p$-chief factor).

\begin{lemma}[{\cite[X, 13.16(a)]{Hup}}]\label{ls5}
Suppose that $G =G_1\times\dots\times G_n$, where each $G_i$  is a simple
non-abelian normal subgroup of $G$  and $G_i\neq G_j$  for $i\neq j$. Then   any subnormal subgroup $H$ of $G$ is the direct product
of certain $G_i$.
\end{lemma}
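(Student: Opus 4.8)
The plan is to handle the case of a normal subgroup directly by a commutator argument, and then reduce the general subnormal case to it by induction on the subnormal defect. Two facts about $G=G_1\times\dots\times G_n$ drive everything. First, since each $G_i$ is non-abelian simple we have $Z(G_i)=1$, and because the factors of a direct product commute elementwise this gives $C_G(G_i)=\prod_{j\neq i}G_j$. Second, any subgroup of $G_i$ that is normal in $G_i$ is either trivial or equal to $G_i$, by simplicity.

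Next I would settle the normal case: suppose $H\trianglelefteq G$. For each $i$, both $H$ and $G_i$ are normal in $G$, so $[H,G_i]\le H\cap G_i$, while $H\cap G_i\trianglelefteq G_i$ forces $H\cap G_i\in\{1,G_i\}$. Set $I=\{i:G_i\le H\}$ and $K=\prod_{i\in I}G_i\le H$. For each $i\notin I$ we then have $H\cap G_i=1$, hence $[H,G_i]=1$, so $H\le C_G(G_i)=\prod_{j\neq i}G_j$; that is, the $i$-th coordinate of every element of $H$ is trivial. Intersecting these conditions over all $i\notin I$ yields $H\le K$, whence $H=K=\prod_{i\in I}G_i$, which is the asserted form for normal subgroups.

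Finally I would treat the subnormal case by induction on the defect $k$ in a chain $H=H_0\trianglelefteq H_1\trianglelefteq\dots\trianglelefteq H_k=G$; the inductive claim is that the conclusion holds for every group that is a direct product of pairwise distinct non-abelian simple normal subgroups and every subnormal subgroup of defect at most $k$. The base $k\le 1$ is the normal case above. For the step, $M:=H_{k-1}$ is normal in $G$, so by the normal case $M=\prod_{i\in J}G_i$ for some $J\subseteq\{1,\dots,n\}$; then $M$ is again a direct product of pairwise distinct non-abelian simple normal subgroups (namely the $G_i$ with $i\in J$), and $H$ is subnormal in $M$ of defect $k-1$. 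Applying the induction hypothesis inside $M$ gives $H=\prod_{i\in I}G_i$ for some $I\subseteq J$, as required.

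The only delicate point is the coordinate-killing step in the normal case, where one deduces from $H\cap G_i=1$ that the entire $i$-th projection of $H$ vanishes; this is precisely where the triviality of $Z(G_i)$, and hence the identification $C_G(G_i)=\prod_{j\neq i}G_j$, is essential, and where the non-abelian hypothesis on the factors is used. Everything else is routine bookkeeping with the defect induction.
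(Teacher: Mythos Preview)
Your proof is correct. The paper does not give its own proof of this lemma; it simply quotes the result from Huppert--Blackburn \cite[X, 13.16(a)]{Hup} without argument, so there is no paper proof to compare against. Your approach---handling the normal case via the commutator inclusion $[H,G_i]\le H\cap G_i$ together with $C_G(G_i)=\prod_{j\neq i}G_j$, then inducting on the subnormal defect---is the standard one and essentially matches the classical argument in the cited reference.
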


The following lemma directly  follows from  previous lemma

\begin{lemma}\label{l3}
Let a normal subgroup   $N$ of  $G$ be a direct product of isomorphic simple non-abelian groups. Then  $N$ is a direct product of minimal normal subgroups of $G$.
\end{lemma}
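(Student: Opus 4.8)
The plan is to derive this immediately from Lemma~\ref{ls5} by applying it twice: once inside $N$ to identify the minimal normal subgroups of $N$, and once more to control those normal subgroups of $G$ that lie inside $N$. First I would write $N=S_1\times\dots\times S_m$ with the $S_i$ simple non-abelian and pairwise isomorphic; since nontrivial direct factors are pairwise distinct as subgroups, Lemma~\ref{ls5}, applied with $N$ playing the role of the ambient group, tells us that every subnormal subgroup of $N$ is the direct product of some of the $S_i$. In particular the minimal normal subgroups of $N$ are exactly $S_1,\dots,S_m$.

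Next I would let $G$ act by conjugation. Since $N\trianglelefteq G$, conjugation by any $g\in G$ restricts to an automorphism of $N$ and hence permutes the minimal normal subgroups of $N$; so $G$ permutes the set $\Omega=\{S_1,\dots,S_m\}$. For each $G$-orbit $\mathcal{O}\subseteq\Omega$ I would put $M_\mathcal{O}=\prod_{S_i\in\mathcal{O}}S_i$: the factors commute and intersect trivially, so $M_\mathcal{O}$ is their internal direct product; it is $G$-invariant by the choice of $\mathcal{O}$, hence $M_\mathcal{O}\trianglelefteq G$; and ranging over the distinct orbits gives $N=\prod_{\mathcal{O}}M_\mathcal{O}$.

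Finally I would check that each $M_\mathcal{O}$ is a minimal normal subgroup of $G$. If $1\neq L\trianglelefteq G$ with $L\leq M_\mathcal{O}$, then $L\leq N$ and $L\trianglelefteq N$, so $L$ is subnormal in $N$ and, by Lemma~\ref{ls5} again, $L=\prod_{i\in I}S_i$ for some non-empty $I$ with every such $S_i$ lying in $\mathcal{O}$; choosing one $S_i\leq L$ with $S_i\in\mathcal{O}$ and using that $L\trianglelefteq G$ forces $S_i^{\,g}\leq L$ for all $g\in G$, that is, $M_\mathcal{O}\leq L$, whence $L=M_\mathcal{O}$. Thus $N=\prod_{\mathcal{O}}M_\mathcal{O}$ exhibits $N$ as a direct product of minimal normal subgroups of $G$. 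I do not expect any real obstacle here; the only points deserving a moment's care are keeping the two applications of Lemma~\ref{ls5} straight (inside $N$ when identifying its minimal normal subgroups, and inside $N$ again but with ambient group $G$ when analysing $L$) and the elementary observation that conjugation by an element of $G$ restricts to an automorphism of the normal subgroup $N$ and therefore preserves subnormality within $N$.
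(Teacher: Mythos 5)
Your proof is correct and follows the same route as the paper, which simply asserts that the lemma ``directly follows'' from Lemma~\ref{ls5}; your orbit argument (identifying the minimal normal subgroups of $N$ as the simple direct factors, letting $G$ permute them, and checking that each orbit product is minimal normal in $G$) is exactly the natural way to make that deduction explicit. All steps check out.
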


\begin{proof}[{\bf Proof of Theorem \ref{MT1}}] $(1)$
 From $(b)$ and $(c)$ of the Isomorphism Theorems \cite[2.1A]{s8} and Lemma \ref{Lgr} it follows that $\mathfrak{X}(\mathcal{R})$ is a formation for any class of groups $\mathfrak{X}$.
So, $\mathfrak{F}(\mathcal{R})$ is a formation.
  Let $\mathfrak{H}=CLF(F_{\mathcal{R}})$.

  Assume  $\mathfrak{H}\setminus\mathfrak{F}({\mathcal{R}})\neq\emptyset$. Let chose a minimal order group $G$ from $\mathfrak{H}\setminus\mathfrak{F}({\mathcal{R}})$.
  Since $\mathfrak{F}({\mathcal{R}})$ is a formation, $G$ has an unique minimal normal subgroup $N$ and $G/N\in\mathfrak{F}({\mathcal{R}})$.

  Suppose that $N$ is abelian. Then it is a $p$-group. Since $N$ is $\mathfrak{H}$-central in $G$ by Lemma \ref{lz}, $G/C_G(N)\in F_{\mathcal{R}}(p)$ by Lemma \ref{l1}. From $F_{\mathcal{R}}(p)=F(p)$ and Lemma \ref{l1} it follows that $N$ is an $\mathfrak{F}$-central chief factor of $G$. Hence $G\in\mathfrak{F}(\mathcal{R})$, a contradiction.

  So  $N$ is non-abelian. Note that $G_\mathfrak{S}\leq C_G(N)$ by \cite[1, 1.5]{s5}. Hence $G\simeq G/C_G(N)\in F_{\mathcal{R}}(0)=\mathfrak{F}(\mathcal{R})$, the contradiction. Thus $\mathfrak{H}\subseteq\mathfrak{F}(\mathcal{R})$.

Assume  $\mathfrak{F}(\mathcal{R})\setminus\mathfrak{H}\neq\emptyset$. Let chose a minimal order group $G$ from $\mathfrak{F}(\mathcal{R})\setminus\mathfrak{H}$. Since $\mathfrak{H}$ is a formation, $G$ has an unique minimal normal subgroup $N$ and $G/N\in\mathfrak{H}$.

If $N$ is abelian, then $G/C_G(N)\in F(p)$ for some $p$ by Lemmas \ref{l1} and \ref{lz}. From $F_{\mathcal{R}}(p)=F(p)$ and Lemma \ref{l1} it follows that $N$ is $\mathfrak{H}$-central in $G$. So $G\in\mathfrak{H}$, a contradiction.

 Hence $N$ is non-abelian.  It means that $G_\mathfrak{S}=1$. Therefore $G/G_\mathfrak{S}\simeq G\in\mathfrak{F}(\mathcal{R})=F_\mathcal{R}(0)$. Note that $N\leq C^p(G)$ for all primes $p$. So $C^p(G)/N=C^p(G/N)$. From $G/N\in\mathfrak{H}$ it follows that $G/C^p(G)\simeq (G/N)/C^p(G/N)\in F_\mathcal{R}(p)$ for any $p$ such that $G$ has an abelian chief $p$-factor. Therefore $G\in\mathfrak{H}$, the contradiction. So $\mathfrak{F}(\mathcal{R})\subseteq \mathfrak{H}$. Thus   $\mathfrak{F}(\mathcal{R})=\mathfrak{H}$.

 $(2)$ Let  $F$ be the canonical composition definition of $\mathfrak{F}$, $G$ be an $\mathfrak{F}$-group and     $1=N_0\trianglelefteq N_1\trianglelefteq\dots\trianglelefteq N_n=N\trianglelefteq G$ be the part of chief series of $G$ below $N$. Let $H/K$ be a chief factor of $N$ such that
 $N_{i-1}\leq K\leq H\leq N_i$ for some $i$.

 If $N_i/N_{i-1}\not\in \mathfrak{F}$, then it is a non-abelian group. According to Lemma \ref{l3} $N_i/N_{i-1}$ is a direct product of minimal normal subgroups of $N/N_{i-1}$.  Let $L/N_{i-1}$ be one of them and $L_1/N_{i-1}$ be a direct simple factor of it. Note that $r(N_i/N_{i-1}, G)=|G:N_G(L_1/N_{i-1})|$, $N_G(L_1/N_{i-1})\cap N=N_N(L_1/N_{i-1})$ and $|G:N_N(L_1/N_{i-1})|$ is a divisor of $|G:N_G(L_1/N_{i-1})|$ by \cite[\S1, Lemma 1]{h1}.    It means  that $r(L/N_{i-1}, N)$ divides $  r(N_i/N_{i-1}, G)$ and every composition factor of $L/N_{i-1}$ is a composition factor of $N_i/N_{i-1}$. Since $\mathcal{R}$ is a good generalized rank function, $gr(L/N_{i-1}, N)\in \mathcal{R}(L/N_{i-1})$   for any chief factor $L/N_{i-1}$ of $N$ between  $N_{i-1}$ and $N_i$.

If $N_i/N_{i-1}\in \mathfrak{F}$, then it is $\mathfrak{F}$-central in $G$.   Note that $H/K\in\mathfrak{F}$.

 Assume that $N_i/N_{i-1}$ is abelian.  Then $G/C_G(N_i/N_{i-1})\in F(p)$ for some $p$  by Lemma \ref{l1}. Note that $F(p)$ is a normally hereditary formation by \cite[IV, 3.16]{s8}. Since $$NC_G(N_i/N_{i-1})/C_G(N_i/N_{i-1})\trianglelefteq G/C_G(N_i/N_{i-1}),$$ we see that $$NC_G(N_i/N_{i-1})/C_G(N_i/N_{i-1})\simeq N/C_N(N_i/N_{i-1})\in F(p).$$   From $C_N(N_i/N_{i-1})\leq C_N(H/K)$ it follows that $N/C_N(H/K)$ is a quotient group of \linebreak $N/C_N(N_i/N_{i-1})$. Thus $N/C_N(H/K)\in F(p)$.
Now $H/K$ is an $\mathfrak{F}$-central chief factor of $N$ by Lemma \ref{l1}.

Assume that $N_i/N_{i-1}$ is non-abelian. Then $G/C_G(N_i/N_{i-1})\in\mathfrak{F}$ by Lemma \ref{l1}. Hence $NC_G(N_i/N_{i-1})/C_G(N_i/N_{i-1})\in\mathfrak{F}$. By analogy $N/C_N(H/K)\in \mathfrak{F}$. So $H/K$  is an $\mathfrak{F}$-central chief factor of $N$ by Lemma \ref{l1}.

  Thus every chief $\mathfrak{F}$-factor of $N$ is $\mathfrak{F}$-central and  $gr(\overline{H}, N)\in \mathcal{R}(\overline{H})$  for other chief factors $\overline{H}$ of $N$   by Jordan-H\"{o}lder theorem. Thus $N\in\mathfrak{F}(\mathcal{R})$.
\end{proof}

\begin{proof}[{\bf Proof of Corollaries \ref{c11}, \ref{c12} and \ref{c13}}]
Recall that every local formation is composition. It is known that if $F$ is the canonical local definition of a local formation $\mathfrak{F}$, then $D$ is the canonical composition definition of $\mathfrak{F}$ where $D(0)=\mathfrak{F}$ and $F(p)=D(p)$ for all prime $p$.

Let $\mathcal{R}(H)\equiv(\{1\},\emptyset)$. Note that $\mathcal{R}$ is good. Recall that the classes of all supersoluble and widely supersoluble groups are hereditary local formations with the canonical local definitions $F(p)=\mathfrak{N}_p\mathfrak{A}(p-1)$ and $D(p)=\mathfrak{N}_p(G | G\in w\mathfrak{U}\cap \mathfrak{N}_p\mathcal{A}(p-1))$ (see \cite[Lemma 3.2]{j4}) for every prime $p$ respectively. Note that $\mathfrak{U}_c=\mathfrak{U}(\mathcal{R})$ and $\mathfrak{U}_{cw}=w\mathfrak{U}(\mathcal{R})$. Now Corollaries \ref{c11} and \ref{c12} directly follows from Theorem \ref{MT1}.

Let $\mathcal{R}(H)\equiv(\emptyset, \{1\})$ and $\mathfrak{F}$ be a hereditary local formation. Again $\mathcal{R}$ is good. Let $\overline{H}\in\mathfrak{F}$ be an $\mathfrak{F}$-eccentric chief factor of an $\mathfrak{F}^*$-group $G$. Note that $r(\overline{H}, G)=1$ and $\overline{H}\rtimes G/C_G(\overline{H})$ is a quotient of $\overline{H}\times\overline{H}\in\mathfrak{F}$. Thus, $\overline{H}$ is $\mathfrak{F}$-central in $G$, a contradiction. It means that $\mathfrak{F}^*=\mathfrak{F}(\mathcal{R})$. Now Corollary \ref{c13} directly follows from Theorem \ref{MT1}.
\end{proof}

\subsection{The structure of an $\mathfrak{F}(\mathcal{R})$-group}

The aim of this subsection is to obtain the characterization of an $\mathfrak{F}(\mathcal{R})$-group.

\begin{definition}
  Let $Z(G, \mathcal{R}, \mathfrak{F}, n)$ be the greatest $G$-invariant subgroup of $G$ such that $\overline{H}\not\in\mathfrak{F}$, $r(\overline{H}, G)>n$  and $gr(\overline{H}, G)\in \mathcal{R}(\overline{H})$   for  every its $G$-composition $\mathfrak{F}$-eccentric in $G$  factor\,$\overline{H}$.
\end{definition}

Let $C$ be a set  and $\mathcal{R}$ be a generalized rank function. We say that $\mathcal{R}(\overline{H})\subseteq C$ if $A_\mathcal{R}(\overline{H})\cup B_\mathcal{R}(\overline{H})\subseteq C$. By $\mathcal{R}(\overline{H})\cap C$ we mean $(A_\mathcal{R}(\overline{H})\cap C, B_\mathcal{R}(\overline{H})\cap C)$.

\begin{remark}
 $(1)$ Let $N$ and $M$ be  normal subgroups of $G$. According to (b) of the Isomorphism Theorems \cite[2.1A]{s8} every $G$-composition factor of $NM$ is $G$-isomorphic to a $G$-composition factor of $N$ or $M$. Hence  $Z(G, \mathcal{R}, \mathfrak{F}, n)$ exists in every group    by Lemma \ref{Lgr}.

 $(2)$ It is clear that $G\in\mathfrak{F}(\mathcal{R})$ iff $G=Z(G, \mathcal{R}, \mathfrak{F}, 0)$.

 $(3)$ If $\mathcal{R}(S)\subseteq[0, 1]$ for every simple group $S$, then  $Z(G, \mathcal{R}, \mathfrak{F}, n)=\mathrm{Z}_\mathfrak{F}(G)$ for $n>1$.
\end{remark}


\begin{theorem}\label{T2}
Let $\mathfrak{F}$ be a   solubly saturated  formation containing all nilpotent groups such that $\mathfrak{F}$ contains every composition factor of every $\mathfrak{F}$-group and $\mathcal{R}$ be a generalized rank function.   Then
the following statements are equivalent:

$(1)$ $G$ is an $\mathfrak{F}(\mathcal{R})$-group.

$(2)$
Let $Z=Z(G, \mathcal{R}, \mathfrak{F}, 4)$. Then $gr(N/Z, G)\in \mathcal{R}(N/Z)\cap [1, 4]$ for every minimal normal subgroup $N/Z$ of $G/Z$   and   $(G/Z)/\mathrm{Soc}(G/Z)$ is a soluble $\mathfrak{F}$-group.

$(3)$ The following holds:

\quad $(a)$ $G^\mathfrak{F}=G^{E\mathfrak{F}}$.

\quad $(b)$ If $N\trianglelefteq G$ and $N\leq G^\mathfrak{F}$, then $(G^\mathfrak{F}/N)_{E\mathfrak{F}}=\mathrm{Z}(G^\mathfrak{F}/N)$.

\quad $(c)$ Let $n$ be the least number such that there is a simple  non-$\mathfrak{F}$-section in $S_{n+1}$ and  $T=G^\mathfrak{F}\cap Z(G, \mathcal{R}, \mathfrak{F}, n)$.  Then $G^\mathfrak{F}/T\leq \mathrm{Soc}(G/T)$ and $N/T\not\in\mathfrak{F}$, $r(N/T, G)\leq n$ and $gr(N/T, G)\in\mathcal{R}(N/T)$ for every minimal normal subgroup $N/T$ of $G/T$ from $G^\mathfrak{F}/T$.
\end{theorem}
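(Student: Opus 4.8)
The plan is to prove the equivalences $(1)\Leftrightarrow(2)$ and $(1)\Leftrightarrow(3)$, all four implications relying on the same preliminaries. Since $\mathfrak{N}\subseteq\mathfrak{F}$, every abelian group is an $\mathfrak{F}$-group, so the clause ``$\overline H\notin\mathfrak{F}$'' in the definition of $\mathfrak{F}(\mathcal{R})$ forces every $\mathfrak{F}$-eccentric chief factor of an $\mathfrak{F}(\mathcal{R})$-group to be non-abelian; moreover the $gr$-clause of Definition~\ref{MD1}, applied to the composition factor $\overline N/1$ of a simple chief factor $\overline N$ (which every element of $G$ stabilises), gives $G/C_G(\overline N)\le\mathrm{Inn}(\overline N)$, so a simple abelian chief factor with $gr(\overline N,G)\in\mathcal{R}(\overline N)$ is $G$-central, and irreducibility excludes abelian chief factors of rank $\ge2$ with an admissible $gr$-value. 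Hence, for $G\in\mathfrak{F}(\mathcal{R})$ (and, as the converses will show, also under $(2)$ or $(3)$), every abelian chief factor and every $\mathfrak{F}$-chief factor of $G$ is $\mathfrak{F}$-central. Next, since $\mathfrak{F}$ is solubly saturated, $G/G^{\mathfrak{F}}\in\mathfrak{F}=\mathrm{Z}_\mathfrak{F}(G/G^{\mathfrak{F}})$ by Lemma~\ref{lz}, so all chief factors above $G^{\mathfrak{F}}$ are $\mathfrak{F}$-central, while $\mathfrak{F}$-centrality of $\overline H$ yields $G/C_G(\overline H)\in\mathfrak{F}$ and hence $G^{\mathfrak{F}}\le C_G(\overline H)$; thus every $\mathfrak{F}$-eccentric chief factor lies below $G^{\mathfrak{F}}$ and $G^{\mathfrak{F}}$ centralises every $\mathfrak{F}$-central chief factor. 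Finally, being a formation, $\mathfrak{F}$ is closed under subdirect products, so $S^k\in\mathfrak{F}$ whenever $S\in\mathfrak{F}$ is simple; with the hypothesis on composition factors this gives $\mathfrak{F}\subseteq E\mathfrak{F}$, shows $E\mathfrak{F}$ is an extension-closed formation (so $X^{E\mathfrak{F}}$ is always $E\mathfrak{F}$-perfect), and shows that a chief factor all of whose composition factors lie in $\mathfrak{F}$ is itself an $\mathfrak{F}$-group.

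For $(1)\Rightarrow(3)$, part $(a)$ follows because $G/G^{E\mathfrak{F}}$ lies in $E\mathfrak{F}\cap\mathfrak{F}(\mathcal{R})$, hence has all chief factors in $\mathfrak{F}$, hence $\mathfrak{F}$-central, hence lies in $\mathfrak{F}$ by Lemma~\ref{lz}, so $G^{\mathfrak{F}}\le G^{E\mathfrak{F}}$, the reverse inclusion being clear. For part~$(b)$, put $R/N=(G^{\mathfrak{F}}/N)_{E\mathfrak{F}}$; its $G$-chief factors lie in $\mathfrak{F}$, hence are $\mathfrak{F}$-central, hence are centralised by $G^{\mathfrak{F}}$, and a non-abelian such factor would be centralised by its own preimage inside $R$, which is impossible; so $R/N$ is nilpotent, $G^{\mathfrak{F}}/N$ stabilises a chief series of it, whence $R/N\le\mathrm{Z}_\infty(G^{\mathfrak{F}}/N)$, and since $\mathrm{Z}_\infty(G^{\mathfrak{F}}/N)$ is nilpotent and normal it lies in $R/N$, giving $(G^{\mathfrak{F}}/N)_{E\mathfrak{F}}=\mathrm{Z}_\infty(G^{\mathfrak{F}}/N)=\mathrm{F}(G^{\mathfrak{F}}/N)$; one then has to push this down to $\mathrm{Z}(G^{\mathfrak{F}}/N)$ by showing that $G^{\mathfrak{F}}/N$, being $E\mathfrak{F}$-perfect, has components and that the diagonal-inner data of $gr\in\mathcal{R}$ on its non-abelian chief factors force its generalised Fitting subgroup to be the product of its centre with a quasisemisimple normal subgroup. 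For part~$(c)$, with $n$ and $T=G^{\mathfrak{F}}\cap Z(G,\mathcal{R},\mathfrak{F},n)$ as stated, maximality of $Z(G,\mathcal{R},\mathfrak{F},n)$ absorbs every $\mathfrak{F}$-central minimal normal subgroup of $G/Z(G,\mathcal{R},\mathfrak{F},n)$, and absorbs into $T$ every $\mathfrak{F}$-eccentric chief factor of rank $>n$ immediately above $T$; so every chief factor of $G$ between $T$ and $G^{\mathfrak{F}}$ is $\mathfrak{F}$-eccentric, hence non-abelian of the form $S^k$ with $S\notin\mathfrak{F}$, $k=r(S^k,G)\le n$, $gr(S^k,G)\in\mathcal{R}(S^k)$, and by Lemma~\ref{l3} these split off as minimal normal subgroups of $G/T$, giving $G^{\mathfrak{F}}/T\le\mathrm{Soc}(G/T)$ with the asserted data (carried up by Lemma~\ref{Lgr}). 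The value of $n$ prevents two ``stacked'' eccentric layers: a would-be second layer is acted on by $G$ through a subgroup of some $S\wr S_k$ with $k\le n$, whose composition factors all lie in $\mathfrak{F}$, which — via Lemma~\ref{ls5} applied to the image of the first layer — forces the relevant quotient into $\mathfrak{F}$ and collapses the layer.

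The converses are obtained by reading the structure backwards with Lemma~\ref{Lgr}. Under $(3)$: chief factors above $G^{\mathfrak{F}}$ are $\mathfrak{F}$-central; chief factors below $T$ are $\mathfrak{F}$-central or non-$\mathfrak{F}$ with $gr\in\mathcal{R}$ since $T\le Z(G,\mathcal{R},\mathfrak{F},n)$; and a chief factor inside $G^{\mathfrak{F}}/T\le\mathrm{Soc}(G/T)$ is $G$-isomorphic to one of the prescribed minimal normal subgroups, hence non-$\mathfrak{F}$ with $gr\in\mathcal{R}$; so $G\in\mathfrak{F}(\mathcal{R})$ by Definition~\ref{MD1}. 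Under $(2)$, with $Z=Z(G,\mathcal{R},\mathfrak{F},4)$: chief factors below $Z$ and above $\mathrm{Soc}(G/Z)$ are immediate, and a chief factor inside $\mathrm{Soc}(G/Z)$ is $G$-isomorphic to a minimal normal subgroup $N/Z$ of $G/Z$, which is $\mathfrak{F}$-eccentric (else absorbed into $Z$) and, since $gr(N/Z,G)\in\mathcal{R}(N/Z)$, non-abelian by the first remark, hence non-$\mathfrak{F}$ — so again $G\in\mathfrak{F}(\mathcal{R})$. Finally $(1)\Rightarrow(2)$ runs as in $(c)$ with the universal bound $4\le n$ (valid since $S_2,S_3,S_4$ have only $C_2,C_3\in\mathfrak{N}\subseteq\mathfrak{F}$ as simple sections): every minimal normal subgroup of $G/Z(G,\mathcal{R},\mathfrak{F},4)$ is $\mathfrak{F}$-eccentric of rank $\le4$ with $gr\in\mathcal{R}\cap[1,4]$, the (same) no-stacking fact puts all eccentric chief factors at the socle level, and $(G/Z)/\mathrm{Soc}(G/Z)$ — an $\mathfrak{F}$-group with all composition factors in $\mathfrak{F}$ and no $\mathfrak{F}$-eccentric chief factor — is a soluble $\mathfrak{F}$-group.

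The main obstacle is part~$(b)$ together with the no-stacking / semisimplicity assertion in $(c)$: passing from ``$G^{\mathfrak{F}}/N$ stabilises a chief series of its $E\mathfrak{F}$-radical'' to ``centralises it'' (i.e.\ $\mathrm{F}(G^{\mathfrak{F}}/N)=\mathrm{Z}(G^{\mathfrak{F}}/N)$), and showing $G^{\mathfrak{F}}/T$ is literally a direct product of minimal normal subgroups of $G/T$ rather than merely carrying a chief series of eccentric factors. This is exactly where the diagonal-inner clause of the generalised rank function is combined with Lemmas~\ref{ls5} and~\ref{l3} and the hypothesis that $\mathfrak{F}$ contains all composition factors of its groups; the rest is routine bookkeeping with chief series, Lemma~\ref{l1}, the canonical composition definition $F_{\mathcal{R}}$ of Theorem~\ref{MT1}, and maximality of the subgroups $Z(G,\mathcal{R},\mathfrak{F},\cdot)$.
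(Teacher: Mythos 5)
Your overall architecture matches the paper's: the same preliminary reductions (abelian and $\mathfrak{F}$-chief factors of an $\mathfrak{F}(\mathcal{R})$-group are $\mathfrak{F}$-central, eccentric factors live below $G^{\mathfrak{F}}$), the converses $(2),(3)\Rightarrow(1)$ by Jordan--H\"older plus Lemma~\ref{Lgr}, and the forward implications via the action of $G$ on the socle of the relevant quotient through $\mathrm{Aut}(\overline{K}_1)\wr S_k$, Schreier's conjecture and the $E\mathfrak{F}$-perfectness of $G^{\mathfrak{F}}=G^{E\mathfrak{F}}$. But there is a genuine gap exactly where you flag ``the main obstacle'', namely $(1)\Rightarrow(3)(b)$. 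Your argument shows that $G^{\mathfrak{F}}/N$ stabilises a chief series of $R/N=(G^{\mathfrak{F}}/N)_{E\mathfrak{F}}$, whence $R/N=\mathrm{Z}_\infty(G^{\mathfrak{F}}/N)=\mathrm{F}(G^{\mathfrak{F}}/N)$; but stabilising a series yields only nilpotent, not trivial, action, so this does not descend to $\mathrm{Z}(G^{\mathfrak{F}}/N)$. The sketched repair via ``components'' and a ``quasisemisimple normal subgroup'' does not work: $R/N$ is nilpotent and abelian chief factors carry no $gr$-data (they are $\mathfrak{F}$-central because $\mathfrak{N}\subseteq\mathfrak{F}$, and $\mathcal{R}$ says nothing about them), so the diagonal-inner clause gives you no leverage on $\mathrm{F}(G^{\mathfrak{F}}/N)$. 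The paper closes this with one classical input you are missing: $G^{\mathfrak{F}}\leq C_G(\mathrm{Z}_{\mathfrak{F}}(G))$ (Shemetkov, \cite[Corollary 2.3.1]{z}). Combined with your own preliminary observation $H_{E\mathfrak{F}}\leq\mathrm{Z}_{\mathfrak{F}}(H)$ for every $\mathfrak{F}(\mathcal{R})$-group $H$, applied to $H=G/N$, it gives $(G^{\mathfrak{F}}/N)_{E\mathfrak{F}}\leq (G/N)_{E\mathfrak{F}}\leq\mathrm{Z}_{\mathfrak{F}}(G/N)$, which is centralised by $(G/N)^{\mathfrak{F}}=G^{\mathfrak{F}}/N$; the reverse inclusion $\mathrm{Z}(G^{\mathfrak{F}}/N)\leq(G^{\mathfrak{F}}/N)_{E\mathfrak{F}}$ is trivial.

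Two smaller points. In the no-stacking step of $(c)$ the quotient $\overline{G^{\mathfrak{F}}}/\tilde{\mathrm{F}}(\overline{G^{\mathfrak{F}}})$ lands in $E\mathfrak{F}$ (via $\mathrm{Out}(\overline{N}_1)\wr S_k$ with $k\leq n$, the definition of $n$ and Schreier), not in $\mathfrak{F}$ as you write; it is then killed by $G^{\mathfrak{F}}=G^{E\mathfrak{F}}$, i.e.\ by $E\mathfrak{F}$-perfectness. Likewise, in $(1)\Rightarrow(2)$ the solubility of $(G/Z)/\mathrm{Soc}(G/Z)$ is not a consequence of ``all composition factors in $\mathfrak{F}$''; it requires the explicit embedding of $G/K$ into $\mathrm{Out}(\overline{K}_1)\times\dots\times\mathrm{Out}(\overline{K}_n)$, which is soluble because each rank is at most $4$ and by Schreier. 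With the Shemetkov centralisation lemma supplied and these points made precise, your proof coincides with the paper's.
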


Recall \cite[p. 13]{s5} that a group is called \emph{semisimple} provided it is either identity or the direct product of some simple non-abelian groups.

\begin{corollary}[{\cite[X, 13.6]{s5}}]\label{c21}
  A group $G$ is quasinilpotent if and only if $G/\mathrm{Z}_\infty(G)$ is semi\-simple.
\end{corollary}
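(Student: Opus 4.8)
The plan is to derive Corollary~\ref{c21} as the special case of Theorem~\ref{T2} with $\mathfrak{F}=\mathfrak{N}$ (the class of all nilpotent groups) and $\mathcal{R}$ the generalized rank function given by $\mathcal{R}(H)\equiv(\emptyset,\{1\})$. First I would check that $\mathfrak{F}=\mathfrak{N}$ meets the hypotheses of Theorem~\ref{T2}: $\mathfrak{N}$ is a saturated, hence solubly saturated, formation containing all nilpotent groups, and every composition factor of a nilpotent group is cyclic of prime order and so lies in $\mathfrak{N}$. Next one identifies $\mathfrak{F}(\mathcal{R})=\mathfrak{N}(\mathcal{R})$ with $\mathfrak{N}^{*}$: by Lemma~\ref{lz} a group lies in $\mathfrak{N}^{*}$ iff each of its chief factors is $\mathfrak{N}^{*}$-central, i.e.\ is either a central abelian factor or a simple non-abelian factor on which $G$ induces only inner automorphisms, and this is exactly the membership condition of $\mathfrak{N}(\mathcal{R})$ (for an $\mathfrak{N}$-eccentric, equivalently non-central, chief factor $\overline{H}$ the requirement ``$\overline{H}\notin\mathfrak{N}$ and $gr(\overline{H},G)\in(\emptyset,\{1\})$'' forbids abelian ones and forces the others to be simple with inner action); cf.\ Example~\ref{ex1}(6). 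Finally, since $\mathcal{R}(S)\subseteq[0,1]$ for every simple group $S$, part $(3)$ of the Remark preceding Theorem~\ref{T2} gives $Z:=Z(G,\mathcal{R},\mathfrak{N},4)=\mathrm{Z}_{\mathfrak{N}}(G)=\mathrm{Z}_{\infty}(G)$.

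It then remains to translate statement~$(2)$ of Theorem~\ref{T2} into ``$G/Z$ is semisimple''. Here $\mathcal{R}(N/Z)\cap[1,4]=(\emptyset,\{1\})$, so $gr(N/Z,G)\in\mathcal{R}(N/Z)\cap[1,4]$ means precisely that $r(N/Z,G)=1$ — that is, $N/Z$ is a simple group — and that $G$ induces only inner automorphisms on $N/Z$. As $Z=\mathrm{Z}_{\infty}(G)$, the group $G/Z$ has no central chief factor, and an abelian non-central $N/Z$ would carry a nontrivial, hence non-inner, automorphism induced by some $x\in G$; so every minimal normal subgroup of $G/Z$ is a non-abelian simple group and $\mathrm{Soc}(G/Z)$ is semisimple, while condition~$(2)$ further asserts that $(G/Z)/\mathrm{Soc}(G/Z)$ is nilpotent.

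For the implication ``$G$ quasinilpotent $\Rightarrow$ $G/Z$ semisimple'' I would argue: write $\mathrm{Soc}(G/Z)=S_{1}\times\dots\times S_{k}$ with each $S_{i}$ a simple non-abelian minimal normal subgroup of $G/Z$. Every nontrivial normal subgroup of $G/Z$ meets $\mathrm{Soc}(G/Z)$ nontrivially and $\mathrm{Soc}(G/Z)$ has trivial centre, so $C_{G/Z}(\mathrm{Soc}(G/Z))=1$ and $G/Z$ embeds in $\mathrm{Aut}(\mathrm{Soc}(G/Z))$. Each $S_{i}$ is normal in $G/Z$, hence is not permuted, and by the inner-automorphism clause $G/Z$ induces on each $S_{i}$ only inner automorphisms; therefore the image of $G/Z$ lies in $\prod_{i}\mathrm{Inn}(S_{i})=\mathrm{Inn}(\mathrm{Soc}(G/Z))\cong\mathrm{Soc}(G/Z)$, and together with $\mathrm{Soc}(G/Z)\leq G/Z$ this forces $G/Z=\mathrm{Soc}(G/Z)$, which is semisimple. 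Conversely, if $G/Z$ is semisimple then $\mathrm{Soc}(G/Z)=G/Z$, so $(G/Z)/\mathrm{Soc}(G/Z)=1$ is a soluble nilpotent group; by Lemma~\ref{ls5} the minimal normal subgroups of $G/Z$ are exactly its simple direct factors, each of rank $1$, and the $G$-action on such a factor $N/Z$ factors through $G/Z$ acting by conjugation, hence by inner automorphisms, so $gr(N/Z,G)\in(\emptyset,\{1\})$. Thus condition~$(2)$ of Theorem~\ref{T2} holds and $G\in\mathfrak{N}^{*}$.

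The step I expect to be the main obstacle is the one in the forward direction that strengthens ``$(G/Z)/\mathrm{Soc}(G/Z)$ is nilpotent'' — which is all Theorem~\ref{T2}$(2)$ gives outright — to ``$(G/Z)/\mathrm{Soc}(G/Z)=1$''. This genuinely uses the inner-automorphism part of $gr(N/Z,G)\in\mathcal{R}(N/Z)$: without it one would only know $G/Z\hookrightarrow\mathrm{Aut}(\mathrm{Soc}(G/Z))$ with nilpotent image modulo inner automorphisms, which does not collapse the image onto $\mathrm{Inn}(\mathrm{Soc}(G/Z))$. A minor point to be careful about is that the simple factors $S_{i}$ of $\mathrm{Soc}(G/Z)$ are individually normal in $G/Z$, so that $\mathrm{Aut}$ may be taken coordinatewise as $\prod_{i}\mathrm{Aut}(S_{i})$; this is where $r(N/Z,G)=1$ — each minimal normal subgroup being simple — is used.
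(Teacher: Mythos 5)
Your proposal is correct and follows essentially the same route as the paper: the paper derives Corollary \ref{c21} from Corollary \ref{c22} (the quasi-$\mathfrak{F}$ case), whose proof is exactly your argument — take $\mathcal{R}(H)\equiv(\emptyset,\{1\})$, identify $\mathfrak{F}(\mathcal{R})=\mathfrak{F}^*$ and $Z(G,\mathcal{R},\mathfrak{F},4)=\mathrm{Z}_\mathfrak{F}(G)$, and use the embedding of $(G/Z)/\mathrm{Soc}(G/Z)$ into the outer automorphism group together with the inner-automorphism clause to force $G/Z=\mathrm{Soc}(G/Z)$. You merely specialize to $\mathfrak{F}=\mathfrak{N}$ directly and spell out a few details (such as $C_{G/Z}(\mathrm{Soc}(G/Z))=1$) that the paper leaves to the proof of Theorem \ref{T2}.
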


\begin{corollary}[{\cite[Theorem 2.8]{sk2}}]\label{c22}
  Let $\mathfrak{F}$ be a normally hereditary saturated formation containing all nilpotent groups. A group $G$ is a quasi-$\mathfrak{F}$-group  if and only if $G/\mathrm{Z}_\mathfrak{F}(G)$ is semisimple.
\end{corollary}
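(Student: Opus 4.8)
The plan is to specialize Theorem~\ref{T2} to the generalized rank function $\mathcal{R}$ with $\mathcal{R}(H)\equiv(\emptyset,\{1\})$, for which Example~\ref{ex1}(8) (proved in the Proof of Corollaries above) gives $\mathfrak{F}(\mathcal{R})=\mathfrak{F}^*$, so that $\mathfrak{F}(\mathcal{R})$ is precisely the class of quasi-$\mathfrak{F}$-groups. First I would check that $\mathfrak{F}$ meets the hypotheses of Theorem~\ref{T2}: a saturated formation is solubly saturated, $\mathfrak{N}\subseteq\mathfrak{F}$ holds by assumption, and a normally hereditary formation is closed under subnormal subgroups (descend any subnormal series one normal step at a time), whence every composition factor $H/K$ of an $\mathfrak{F}$-group is a quotient of a subnormal $\mathfrak{F}$-subgroup $H$ and so lies in $\mathfrak{F}$. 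Since $A_\mathcal{R}(S)\cup B_\mathcal{R}(S)=\{1\}\subseteq[0,1]$ for every simple $S$, Remark~(3) would give $Z:=Z(G,\mathcal{R},\mathfrak{F},4)=\mathrm{Z}_\mathfrak{F}(G)$, and moreover $\mathcal{R}(N/Z)\cap[1,4]=(\emptyset,\{1\})$.

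For the forward implication I would assume $G$ is a quasi-$\mathfrak{F}$-group, so $G\in\mathfrak{F}(\mathcal{R})$ and statement~(2) of Theorem~\ref{T2} holds. For each minimal normal subgroup $N/Z$ of $G/Z$ the condition $gr(N/Z,G)\in(\emptyset,\{1\})$ forces $r(N/Z,G)=1$, so $N/Z$ is a simple group. As $N/Z$ is a chief factor lying immediately above the $\mathfrak{F}$-hypercenter it is $\mathfrak{F}$-eccentric, hence $N/Z\notin\mathfrak{F}$ by the definition of $\mathfrak{F}(\mathcal{R})$; since every abelian simple group is cyclic of prime order and thus lies in $\mathfrak{N}\subseteq\mathfrak{F}$, the factor $N/Z$ must be non-abelian simple. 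Therefore $\mathrm{Soc}(G/Z)$ is a direct product of non-abelian simple groups, i.e. semisimple. By statement~(2) the quotient $(G/Z)/\mathrm{Soc}(G/Z)$ is a soluble $\mathfrak{F}$-group; but its chief factors are chief factors of $G$ above $Z$, all non-abelian simple, so it has no abelian chief factor and must be trivial. Hence $G/Z=\mathrm{Soc}(G/Z)$ is semisimple.

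For the converse I would suppose $G/Z$ is semisimple, say $G/Z=S_1\times\dots\times S_k$ with each $S_i$ non-abelian simple. By Lemma~\ref{l3} the $S_i$ are exactly the minimal normal subgroups of $G/Z$, and $\mathrm{Soc}(G/Z)=G/Z$, so $(G/Z)/\mathrm{Soc}(G/Z)=1$ is trivially a soluble $\mathfrak{F}$-group. It remains to verify $gr(S_i,G)\in(\emptyset,\{1\})$ for each $i$: each $S_i$ has rank $1$ and its only composition factor is $S_i$ itself, and since $S_i\trianglelefteq G/Z$ every $x\in G$ fixes it and induces on it conjugation by the image $\bar{x}=(s_1,\dots,s_k)\in G/Z$, whose $S_i$-component acts as the inner automorphism determined by $s_i$ while the remaining components centralize $S_i$; thus $x$ induces an inner automorphism. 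Then statement~(2) of Theorem~\ref{T2} holds, and the implication $(2)\Rightarrow(1)$ yields $G\in\mathfrak{F}(\mathcal{R})=\mathfrak{F}^*$.

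The specialization $\mathcal{R}\equiv(\emptyset,\{1\})$ and the bookkeeping with Remark~(3) are routine. I expect the main obstacle to be the inner-automorphism check in the converse: one must argue from the \emph{global} decomposition $G/Z=\prod S_i$ that conjugation by an arbitrary $x\in G$ restricts to an \emph{inner} automorphism of each simple factor, not merely to an automorphism fixing it. The second delicate point is the final step of the forward direction, where solubility of $(G/Z)/\mathrm{Soc}(G/Z)$ combined with the absence of abelian chief factors above $Z$ forces that quotient to be trivial; this is exactly where the hypothesis $\mathfrak{N}\subseteq\mathfrak{F}$, which places every abelian simple group in $\mathfrak{F}$, does the essential work.
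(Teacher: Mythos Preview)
Your converse direction is fine and matches the paper's implicit use of $(2)\Rightarrow(1)$ in Theorem~\ref{T2}. The forward direction, however, has a genuine gap at the step where you conclude $(G/Z)/\mathrm{Soc}(G/Z)=1$.

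You argue that the chief factors of $(G/Z)/\mathrm{Soc}(G/Z)$ are ``chief factors of $G$ above $Z$, all non-abelian simple''. But you have only shown that the \emph{minimal normal subgroups} of $G/Z$ are non-abelian simple; chief factors of $(G/Z)/\mathrm{Soc}(G/Z)$ lie above $\mathrm{Soc}(G/Z)$, not immediately above $Z$, and nothing you have proved forces those to be non-abelian. Indeed, since Theorem~\ref{T2}(2) tells you this quotient is soluble, its chief factors (if any) are abelian, hence lie in $\mathfrak{N}\subseteq\mathfrak{F}$ and are therefore $\mathfrak{F}$-central in $G$. That is perfectly compatible with $Z=\mathrm{Z}_\mathfrak{F}(G)$: the $\mathfrak{F}$-hypercenter only controls chief factors \emph{below} it, and $\mathfrak{F}$-central chief factors can sit above $\mathfrak{F}$-eccentric ones. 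So the contradiction you are reaching for does not materialize.

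What you have not yet used is the inner-automorphism half of $gr(N/Z,G)\in(\emptyset,\{1\})$: every $x\in G$ induces an \emph{inner} automorphism on each simple minimal normal subgroup of $G/Z$. The paper's proof invokes exactly this. In the proof of $(1)\Rightarrow(2)$ of Theorem~\ref{T2}, the monomorphism $\varphi$ embeds $G/Z$ into $\prod_i\mathrm{Aut}(\overline{K}_i)$ with $\varphi(\mathrm{Soc}(G/Z))=\prod_i\mathrm{Inn}(\overline{K}_i)$; since here $G$ induces only inner automorphisms on each $\overline{K}_i$, the image of $\varphi$ already lies in $\prod_i\mathrm{Inn}(\overline{K}_i)$, so $(G/Z)/\mathrm{Soc}(G/Z)$ embeds trivially in $\prod_i\mathrm{Out}(\overline{K}_i)$ and is therefore $1$. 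Replace your solubility argument by this observation and the proof goes through.
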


\begin{corollary}[{\cite[Theorem A]{j7}}]\label{c23}
 Let $\mathfrak{N}\subseteq\mathfrak{F}$ be a saturated formation of soluble groups. Then $G\in\mathfrak{F}_{ca}$ if and only if $G^\mathfrak{F}=G^\mathfrak{S}$, $\mathrm{Z}(G^\mathfrak{F})\leq\mathrm{Z}_\mathfrak{F}(G)$ and $G^\mathfrak{F}/\mathrm{Z}(G^\mathfrak{F})$ is a direct product of $G$-invariant simple non-abelian groups.
\end{corollary}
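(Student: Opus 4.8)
The plan is to derive the corollary from Theorem~\ref{T2} specialised to the generalised rank function $\mathcal{R}$ of Example~\ref{ex1}(9), namely $\mathcal{R}(H)=(\emptyset,\emptyset)$ for abelian $H\notin\mathfrak{F}$ and $\mathcal{R}(H)=(\{1\},\emptyset)$ for non-abelian $H$. For this $\mathcal{R}$ one has $\mathfrak{F}(\mathcal{R})=\mathfrak{F}_{ca}$ whenever $\mathfrak{N}\subseteq\mathfrak{F}\subseteq\mathfrak{S}$: membership in $\mathfrak{F}(\mathcal{R})$ only constrains $\mathfrak{F}$-eccentric chief factors, and with this $\mathcal{R}$ it excludes abelian $\mathfrak{F}$-eccentric ones and forces non-abelian $\mathfrak{F}$-eccentric ones to have rank $1$, i.e. to be simple (the heredity assumed in Example~\ref{ex1}(9) plays no role for this equality of classes). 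First I would assemble the translation dictionary. Since $\mathfrak{N}\subseteq\mathfrak{F}\subseteq\mathfrak{S}$, the simple $\mathfrak{F}$-groups are exactly the groups of prime order, so every composition factor of an $\mathfrak{F}$-group lies in $\mathfrak{N}\subseteq\mathfrak{F}$ (hence the hypotheses of Theorem~\ref{T2} hold) and $E\mathfrak{F}=\mathfrak{S}$, so $G^{E\mathfrak{F}}=G^\mathfrak{S}$ and the $E\mathfrak{F}$-radical of any group coincides with its soluble radical. The least $n$ with a simple non-$\mathfrak{F}$-section in $S_{n+1}$ is $n=4$, since $S_2,S_3,S_4$ are soluble while $A_5\leq S_5$. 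As $\mathcal{R}(S)\subseteq\{1\}$ for every simple $S$, no chief factor $\overline{H}$ can satisfy both $r(\overline{H},G)>4$ and $gr(\overline{H},G)\in\mathcal{R}(\overline{H})$, so $Z(G,\mathcal{R},\mathfrak{F},4)$ is just the largest $G$-invariant subgroup with no $\mathfrak{F}$-eccentric composition factor, i.e. $Z(G,\mathcal{R},\mathfrak{F},4)=\mathrm{Z}_\mathfrak{F}(G)$; moreover $\mathrm{Z}_\mathfrak{F}(G)$ is soluble, its $G$-chief factors being $\mathfrak{F}$-central and hence, as $\mathfrak{F}\subseteq\mathfrak{S}$, abelian.

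The crux is the identity $T:=G^\mathfrak{F}\cap Z(G,\mathcal{R},\mathfrak{F},4)=G^\mathfrak{F}\cap\mathrm{Z}_\mathfrak{F}(G)=\mathrm{Z}(G^\mathfrak{F})$, to be established whenever statement~$(3)$ of Theorem~\ref{T2} holds, i.e. whenever $G\in\mathfrak{F}(\mathcal{R})$. On the one hand, clause~$(3)(b)$ with $N=1$ gives $(G^\mathfrak{F})_\mathfrak{S}=\mathrm{Z}(G^\mathfrak{F})$, and $T$, lying inside the soluble group $\mathrm{Z}_\mathfrak{F}(G)$, is a soluble normal subgroup of $G^\mathfrak{F}$, so $T\leq\mathrm{Z}(G^\mathfrak{F})$. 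On the other hand $\mathrm{Z}(G^\mathfrak{F})$ is abelian and normal in $G$, so every $G$-chief factor below it is abelian, hence $\mathfrak{F}$-central (by the definition of $\mathfrak{F}_{ca}$, or, in the converse direction of the corollary, by the hypothesis $\mathrm{Z}(G^\mathfrak{F})\leq\mathrm{Z}_\mathfrak{F}(G)$), giving $\mathrm{Z}(G^\mathfrak{F})\leq G^\mathfrak{F}\cap\mathrm{Z}_\mathfrak{F}(G)=T$. This is the one genuinely delicate point, as it is the only place where solubility of $\mathrm{Z}_\mathfrak{F}(G)$ and clause~$(3)(b)$ must be combined and both inclusions tracked with care; the rest is transcription.

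For the forward implication $G\in\mathfrak{F}_{ca}\Rightarrow$ the three conditions I would use $(1)\Rightarrow(3)$ of Theorem~\ref{T2}. Clause~$(3)(a)$ reads $G^\mathfrak{F}=G^{E\mathfrak{F}}=G^\mathfrak{S}$, the first condition. The inclusion $\mathrm{Z}(G^\mathfrak{F})\leq\mathrm{Z}_\mathfrak{F}(G)$ is immediate from the definition of $\mathfrak{F}_{ca}$, $\mathrm{Z}(G^\mathfrak{F})$ being abelian and $G$-normal. Finally, clause~$(3)(c)$ together with $T=\mathrm{Z}(G^\mathfrak{F})$ gives $G^\mathfrak{F}/\mathrm{Z}(G^\mathfrak{F})\leq\mathrm{Soc}(G/\mathrm{Z}(G^\mathfrak{F}))$, so $G^\mathfrak{F}/\mathrm{Z}(G^\mathfrak{F})$ is a product of minimal normal subgroups $N/\mathrm{Z}(G^\mathfrak{F})$ of $G/\mathrm{Z}(G^\mathfrak{F})$, each of which is not an $\mathfrak{F}$-group --- hence non-abelian, since abelian minimal normal subgroups lie in $\mathfrak{N}\subseteq\mathfrak{F}$ --- and satisfies $gr(N/\mathrm{Z}(G^\mathfrak{F}),G)\in\mathcal{R}(N/\mathrm{Z}(G^\mathfrak{F}))=(\{1\},\emptyset)$, forcing $r(N/\mathrm{Z}(G^\mathfrak{F}),G)=1$; thus each is a $G$-invariant simple non-abelian group, and since distinct such subgroups intersect trivially and commute, the product is direct, which is the third condition.

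For the converse I would verify clauses~$(3)(a)$--$(c)$ of Theorem~\ref{T2} from the three hypotheses and conclude $G\in\mathfrak{F}(\mathcal{R})=\mathfrak{F}_{ca}$. Clause~$(a)$ is the first hypothesis. For $(b)$ put $R=G^\mathfrak{F}$, $Z=\mathrm{Z}(R)$; by the third hypothesis $R/Z=\prod_i\overline{S}_i$ with the $\overline{S}_i$ pairwise distinct $G$-invariant simple non-abelian subgroups, and for $N\trianglelefteq G$ with $N\leq R$ the image $NZ/Z$ is subnormal in $R/Z$, hence equals $\prod_{i\in I}\overline{S}_i$ by Lemma~\ref{ls5}; then $R/N$ is a central extension of the semisimple group $R/NZ$ by the central subgroup $NZ/N\cong Z/(Z\cap N)$, whence $(R/N)_\mathfrak{S}=NZ/N=\mathrm{Z}(R/N)$, which is clause~$(3)(b)$. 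Clause~$(b)$ with $N=1$ yields $(G^\mathfrak{F})_\mathfrak{S}=\mathrm{Z}(G^\mathfrak{F})$, so $T=\mathrm{Z}(G^\mathfrak{F})$ by the second paragraph; hence $G^\mathfrak{F}/T=\prod_i\overline{S}_i\leq\mathrm{Soc}(G/T)$, and every minimal normal subgroup of $G/T$ inside $G^\mathfrak{F}/T$ is one of the $\overline{S}_i$, which is not an $\mathfrak{F}$-group, has rank $1\leq 4$ and satisfies $gr\in\mathcal{R}(\overline{S}_i)=(\{1\},\emptyset)$ --- precisely clause~$(c)$. Theorem~\ref{T2} then gives $G\in\mathfrak{F}(\mathcal{R})=\mathfrak{F}_{ca}$, completing the equivalence.
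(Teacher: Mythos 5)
Your proof is correct and follows essentially the same route as the paper: both specialise Theorem \ref{T2} via a generalized rank function whose value on non-abelian simple groups is $(\{1\},\emptyset)$, identify $E\mathfrak{F}=\mathfrak{S}$, $n=4$ and $Z(G,\mathcal{R},\mathfrak{F},4)=\mathrm{Z}_\mathfrak{F}(G)$, and then translate condition $(3)$. The paper compresses the whole translation into the single observation $G^\mathfrak{F}\cap \mathrm{Z}_\mathfrak{F}(G)\leq (G^\mathfrak{F})_{E\mathfrak{F}}$ followed by ``directly follows,'' whereas you spell out the key identification $T=\mathrm{Z}(G^\mathfrak{F})$ and verify both implications explicitly; the mathematical content is the same.
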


\begin{corollary}[{\cite[Proposition 2.4]{j1}}]\label{c24}
  A group $G$ is  $c$-supersoluble  if and only if there is a perfect
normal subgroup $D$ such that $G/D$ is supersoluble, $D/\mathrm{Z}(D)$ is a direct product of
$G$-invariant simple groups, and $\mathrm{Z}(D)$ is supersolubly embedded in $G$.
\end{corollary}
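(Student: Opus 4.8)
The plan is to read this corollary off from Theorem~\ref{T2} specialised to $\mathfrak{F}=\mathfrak{U}$ and $\mathcal{R}(H)\equiv(\{1\},\emptyset)$, for which $\mathfrak{U}(\mathcal{R})=\mathfrak{U}_c$ by the proof of Corollary~\ref{c11}. First I would check that $\mathfrak{U}$ satisfies the hypotheses of Theorem~\ref{T2} (it is a hereditary saturated, hence solubly saturated, formation, $\mathfrak{N}\subseteq\mathfrak{U}$, and every composition factor of a supersoluble group is cyclic of prime order, hence supersoluble), and record three facts special to this case: $E\mathfrak{U}=\mathfrak{S}$ (a simple supersoluble group is cyclic of prime order, so a group lies in $E\mathfrak{U}$ iff all its composition factors do iff it is soluble); the least $n$ with a simple non-$\mathfrak{U}$-section in $S_{n+1}$ is $n=4$ (as $A_5\leq S_5$ whereas $S_1,\dots,S_4$ have only $C_2,C_3$ as simple sections); and, since $\mathcal{R}(S)=\{1\}\subseteq[0,1]$ for every simple $S$, part $(3)$ of the Remark above gives $Z(G,\mathcal{R},\mathfrak{U},4)=\mathrm{Z}_\mathfrak{U}(G)$.

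For the forward implication I would take $G\in\mathfrak{U}_c$ and set $D=G^\mathfrak{U}$. By Theorem~\ref{T2}$(3)(a)$ and $E\mathfrak{U}=\mathfrak{S}$ we get $D=G^\mathfrak{U}=G^{E\mathfrak{U}}=G^\mathfrak{S}$; the soluble residual is always perfect (if $D\neq[D,D]$ then $G/[D,D]$ would be abelian-by-soluble, hence soluble, against minimality of $G^\mathfrak{S}$), so $D$ is a perfect normal subgroup of $G$ and $G/D\in\mathfrak{U}$ by definition of the $\mathfrak{U}$-residual. By Theorem~\ref{T2}$(3)(b)$ with $N=1$ and $E\mathfrak{U}=\mathfrak{S}$, the soluble radical $D_\mathfrak{S}$ of $D$ equals $\mathrm{Z}(D)$. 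Put $T=D\cap\mathrm{Z}_\mathfrak{U}(G)$, which by the third fact above equals $D\cap Z(G,\mathcal{R},\mathfrak{U},4)$; since $\mathrm{Z}_\mathfrak{U}(G)$ is supersoluble, $T$ is a soluble normal subgroup of $D$, so $T\leq D_\mathfrak{S}=\mathrm{Z}(D)$. Theorem~\ref{T2}$(3)(c)$ then gives $D/T\leq\mathrm{Soc}(G/T)$ and says that every minimal normal subgroup $N/T$ of $G/T$ contained in $D/T$ satisfies $N/T\notin\mathfrak{U}$, $r(N/T,G)\leq4$ and $gr(N/T,G)\in\mathcal{R}(N/T)=(\{1\},\emptyset)$; the last condition forces $r(N/T,G)=1$, so each such $N/T$ is a single $G$-invariant simple group, non-abelian because it is not supersoluble. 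Since $D$, hence $D/T$, is perfect, the image of $D/T$ under the projection of $\mathrm{Soc}(G/T)$ onto its abelian part is perfect and abelian, so trivial; thus $D/T$ lies in the product of the non-abelian minimal normal subgroups of $G/T$, and by Lemma~\ref{ls5} it is the direct product of those of them it contains --- each, by the previous sentence, a $G$-invariant simple non-abelian group. Consequently $D/T$ has trivial centre, so the (central) image of $\mathrm{Z}(D)$ in $D/T$ is trivial, i.e. $\mathrm{Z}(D)\leq T$; together with $T\leq\mathrm{Z}(D)$ this gives $T=\mathrm{Z}(D)$. Hence $D/\mathrm{Z}(D)=D/T$ is a direct product of $G$-invariant simple groups, while $\mathrm{Z}(D)=T\leq\mathrm{Z}_\mathfrak{U}(G)$, i.e. $\mathrm{Z}(D)$ is supersolubly embedded in $G$.

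For the converse, given a perfect normal $D$ with $G/D$ supersoluble, $D/\mathrm{Z}(D)=S_1\times\dots\times S_m$ a direct product of $G$-invariant simple groups, and $\mathrm{Z}(D)\leq\mathrm{Z}_\mathfrak{U}(G)$, I would verify $G\in\mathfrak{U}_c$ by taking a chief series of $G$ through $\mathrm{Z}(D)$ and $D$ and checking its factors in three cases: a factor above $D$ is a chief factor of the supersoluble group $G/D$, hence cyclic of prime order; a factor inside $\mathrm{Z}(D)$ is $\mathfrak{U}$-central (as $\mathrm{Z}(D)\leq\mathrm{Z}_\mathfrak{U}(G)$), hence cyclic of prime order; and a factor strictly between $\mathrm{Z}(D)$ and $D$ is a $G$-chief factor of $S_1\times\dots\times S_m$ with each $S_i$ normal in $G/\mathrm{Z}(D)$ and simple, hence $G$-isomorphic to some $S_i$ and thus simple. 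So every chief factor of $G$ is simple, i.e. $G\in\mathfrak{U}_c$.

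The one genuinely delicate point is the equality $T=\mathrm{Z}(D)$, which couples $D_\mathfrak{S}=\mathrm{Z}(D)$ from Theorem~\ref{T2}$(3)(b)$ with the semisimplicity of $D/T$ drawn from Theorem~\ref{T2}$(3)(c)$, and where perfectness of $D$ is exactly what excludes abelian simple direct factors of $D/\mathrm{Z}(D)$; the rest --- perfectness of the soluble residual, $E\mathfrak{U}=\mathfrak{S}$, $n=4$, and the three-case chief-factor analysis --- is routine. Corollaries~\ref{c21}--\ref{c23} come out of the same machine with $(\mathfrak{F},\mathcal{R})$ taken as in Example~\ref{ex1}$(6),(8),(9)$, the only extra ingredient there being the elementary remark that when $\mathcal{R}(S)$ activates the inner-automorphism clause the relevant quotient of $G/Z$ by its socle is already trivial (semisimple).
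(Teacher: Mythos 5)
Your proof is correct and follows essentially the same route as the paper: specialise Theorem~\ref{T2}(3) to $\mathfrak{F}=\mathfrak{U}$ and $\mathcal{R}(H)\equiv(\{1\},\emptyset)$, using $E\mathfrak{U}=\mathfrak{S}$, $n=4$, the perfectness of $G^{\mathfrak{S}}$, and the fact that supersoluble embedding means lying in $\mathrm{Z}_{\mathfrak{U}}(G)$. The only cosmetic difference is that the paper passes through Corollary~\ref{c23} (the $\mathfrak{F}_{ca}$ case with $\mathfrak{F}=\mathfrak{U}$) as an intermediate step and leaves the translation as ``directly follows,'' whereas you carry out that translation (in particular the identification $T=\mathrm{Z}(D)$) explicitly.
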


\begin{corollary}[{\cite[Theorem B]{j4}}]\label{c25}
  A group $G$ is widely $c$-supersoluble if and only if   $G^{w\mathfrak{U}}=G^\mathfrak{S}$, $\mathrm{Z}(G^{w\mathfrak{U}})\leq\mathrm{Z}_\mathfrak{F}(G)$ and $G^{w\mathfrak{U}}/\mathrm{Z}(G^{w\mathfrak{U}})$ is a direct product of $G$-invariant simple non-abelian groups.
\end{corollary}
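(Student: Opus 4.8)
The plan is to deduce Corollary \ref{c25} from Theorem \ref{T2} applied to $\mathfrak{F}:=w\mathfrak{U}$ and the generalized rank function $\mathcal{R}(H)\equiv(\{1\},\emptyset)$, for which $\mathfrak{F}(\mathcal{R})=\mathfrak{U}_{cw}$ as recorded in the proof of Corollary \ref{c12}. First I would check that $w\mathfrak{U}$ meets the hypotheses of Theorem \ref{T2}: it is saturated (hence solubly saturated), it contains $\mathfrak{N}$, and since every $w\mathfrak{U}$-group is soluble its composition factors are the groups $\mathbb{Z}_p$, all of which lie in $\mathfrak{N}\subseteq w\mathfrak{U}$. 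Next I would specialise the data occurring in Theorem \ref{T2}(3): because $\mathfrak{N}\subseteq w\mathfrak{U}\subseteq\mathfrak{S}$ one has $E(w\mathfrak{U})=\mathfrak{S}$, so $G^{E\mathfrak{F}}=G^{\mathfrak{S}}$; a simple section lies outside $w\mathfrak{U}$ iff it is non-abelian, and $S_2,S_3,S_4$ are soluble while $S_5\supseteq A_5$, so the constant $n$ of part (3)(c) equals $4$; and since $A_{\mathcal{R}}(S)\cup B_{\mathcal{R}}(S)=\{1\}\subseteq[0,1]$, the remark following the definition of $Z(G,\mathcal{R},\mathfrak{F},n)$ gives $Z(G,\mathcal{R},w\mathfrak{U},4)=\mathrm{Z}_{w\mathfrak{U}}(G)$. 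I would also isolate two facts used repeatedly: a $w\mathfrak{U}$-central chief factor is abelian (as $w\mathfrak{U}\subseteq\mathfrak{S}$), so $\mathrm{Z}_{w\mathfrak{U}}(G)$ is soluble; and a semisimple group has trivial centre and trivial soluble radical. (Here $\mathrm{Z}_{\mathfrak{F}}$ in the statement is to be read as $\mathrm{Z}_{w\mathfrak{U}}$.)

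Writing $D:=G^{w\mathfrak{U}}$, the task then reduces to showing that, for such $G$, statement (3) of Theorem \ref{T2} is equivalent to the three displayed conditions. For the forward direction assume $G\in\mathfrak{U}_{cw}$, so (3)(a)--(c) hold. Part (a) says $D=G^{E(w\mathfrak{U})}=G^{\mathfrak{S}}$, the first condition. Set $T:=D\cap\mathrm{Z}_{w\mathfrak{U}}(G)$, which is soluble and normal in $G$; by (3)(c), $D/T\leq\mathrm{Soc}(G/T)$ and each minimal normal subgroup $N/T$ of $G/T$ inside $D/T$ satisfies $N/T\notin w\mathfrak{U}$ (hence is non-abelian) and $gr(N/T,G)\in(\{1\},\emptyset)$, which forces $r(N/T,G)=1$, i.e.\ $N/T$ is a $G$-invariant simple non-abelian group; therefore $D/T$ is semisimple. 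Since $\mathrm{Z}(D)$ maps into $\mathrm{Z}(D/T)=1$ we get $\mathrm{Z}(D)\leq T$, while (3)(b) with $N=1$ gives $D_{\mathfrak{S}}=\mathrm{Z}(D)$, and $T\leq D$ soluble normal in $G$ yields $T\leq D_{\mathfrak{S}}=\mathrm{Z}(D)$; hence $T=\mathrm{Z}(D)$. Consequently $\mathrm{Z}(D)=T\leq\mathrm{Z}_{w\mathfrak{U}}(G)$ and $D/\mathrm{Z}(D)=D/T$ is a direct product of $G$-invariant simple non-abelian groups --- the remaining two conditions.

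For the converse I would verify (3)(a)--(c) from the three conditions and invoke Theorem \ref{T2}. Part (a) is immediate. For (c): $\mathrm{Z}(D)\leq\mathrm{Z}_{w\mathfrak{U}}(G)\cap D=T$, and $T/\mathrm{Z}(D)$ is a soluble normal subgroup of the semisimple group $D/\mathrm{Z}(D)$, hence trivial, so $T=\mathrm{Z}(D)$; then $D/T$ is the given direct product of $G$-invariant simple non-abelian groups, each a minimal normal subgroup of $G/T$ of rank $1$, outside $w\mathfrak{U}$, with generalised rank in $(\{1\},\emptyset)$, so (3)(c) holds. For (b): given $N\trianglelefteq G$ with $N\leq D$, note that $[D,N\mathrm{Z}(D)]\leq N$, so $N\mathrm{Z}(D)/N$ is central in $D/N$, while $D/N\mathrm{Z}(D)$, being a quotient of the semisimple group $D/\mathrm{Z}(D)$, is semisimple; hence the soluble radical of $D/N$ is $N\mathrm{Z}(D)/N$, and since a central element of $D/N$ maps to a central element of the centreless group $D/N\mathrm{Z}(D)$ we also get $\mathrm{Z}(D/N)=N\mathrm{Z}(D)/N$, giving (3)(b). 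Thus (3) holds and Theorem \ref{T2} yields $G\in\mathfrak{U}_{cw}$.

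The only step needing real care is the passage, inside the forward direction, from ``$D/T\leq\mathrm{Soc}(G/T)$ together with the pointwise hypotheses of (3)(c)'' to ``$D/T$ is the direct product of the simple $G$-invariant subgroups it contains'': this is the statement that a normal subgroup of $G/T$ contained in the socle is a product of minimal normal subgroups of $G/T$, which for the non-abelian part is Lemma \ref{l3} (together with Lemma \ref{ls5}) and for the abelian part is complete reducibility of the socle as a $G$-module --- but in the present setting it is cleanest to quote it from the structure already exhibited in the proof of Theorem \ref{T2}. Everything else is routine unwinding of the definitions of $G$-invariant simple section, generalized rank, soluble radical and $\mathfrak{F}$-hypercentre, together with the two rigidity facts noted above.
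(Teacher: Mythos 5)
Your proposal is correct and follows essentially the same route as the paper: the paper deduces Corollary \ref{c25} from Corollary \ref{c23} (noting $\mathfrak{U}_{cw}=(w\mathfrak{U})_{ca}$), which is itself obtained by specializing Theorem \ref{T2}(3) to $\mathcal{R}(H)\equiv(\{1\},\emptyset)$ with $E\mathfrak{F}=\mathfrak{S}$, $n=4$ and $Z(G,\mathcal{R},\mathfrak{F},4)=\mathrm{Z}_{\mathfrak{F}}(G)$ --- exactly the specialization you carry out directly for $\mathfrak{F}=w\mathfrak{U}$. You merely supply in full the unwinding that the paper compresses into ``directly follows,'' including the correct reading of $\mathrm{Z}_{\mathfrak{F}}$ as $\mathrm{Z}_{w\mathfrak{U}}$.
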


\begin{proof}[{\bf Proof of Theorem \ref{T2}}]

$(1)\Rightarrow(2)$ Let $G\in\mathfrak{F}(\mathcal{R})$, $Z=Z(G, \mathcal{R}, \mathfrak{F}, 4)$ and $K/Z=\overline{K}=\mathrm{Soc}(G/Z)$.   From the definition of $\mathfrak{F}(\mathcal{R})$ it follows  that $\overline{G}=G/Z$ does not have minimal normal   $\mathfrak{F}$-subgroups. Note that $r(\overline{K}_i, \overline{G})\leq 4$ for every minimal normal subgroup $\overline{K}_i$  of $\overline{G}$ $(i=1,\dots, n)$ by the definition of $Z(G, \mathcal{R}, \mathfrak{F}, 4)$. Note that $\overline{K}_i=\overline{K}_{i,1}\times\dots\times \overline{K}_{i,k}$ is the direct product of isomorphic simple groups and $1\leq k\leq 4$. Hence $\mathrm{Aut}(\overline{K}_i)\simeq \mathrm{Aut}(\overline{K}_{i,1})\wr S_k$ by \cite[1.1.20]{s9}. Note that $S_k$ is soluble and $\mathrm{Out}(\overline{K}_{i,1})$ is soluble by Schreier conjecture. It means that $\mathrm{Out}(\overline{K}_i)$ is soluble.

  Since $\mathfrak{N}\subseteq \mathfrak{F}$, $\Phi(\overline{G})\simeq 1$. Hence $\overline{K}=\tilde{\mathrm{F}}(\overline{G})$. Recall that $\overline{K}=\overline{K}_1\times\dots\times \overline{K}_n$.
Note that every element $xZ$ induces an automorphism $\alpha_{x, i}$ on $\overline{K}_i$ for $i=1,\dots, n$. Let $$\varphi: xZ\to(\alpha_{x, 1},\dots, \alpha_{x, n})$$
It is clear that $\varphi(xZ)\varphi(yZ)=\varphi(xyZ)$. Also note that if $\varphi(xZ)=\varphi(yZ)$, then $y^{-1}xZ$ acts trivially on every $\overline{K}_i$. According to  \cite[\S7, 7.11]{d8}
 $C_G(\mathrm{\tilde{F}}(G))\subseteq\mathrm{\tilde{F}}(G)$. So  $$y^{-1}xZ\in \cap_{i=1}^nC_{\overline{G}}(\overline{K}_i)=C_{\overline{G}}(\overline{K})=
C_{\overline{G}}(\tilde{\mathrm{F}}(\overline{G}))\subseteq \tilde{\mathrm{F}}(\overline{G})=\overline{K}. $$
Hence $y^{-1}xZ=1Z$. Now $yZ=xZ$ and $\varphi$ is injective. Hence $\varphi$ is the monomorphism from $\overline{G}$ to $\mathrm{Aut}(\overline{K}_1)\times\dots\times\mathrm{Aut}(\overline{K}_n)$.  Note that $\varphi(\overline{K})=\mathrm{Inn}(\overline{K}_1)\times\dots\times\mathrm{Inn}(\overline{K}_n)$. It is straightforward to check that
$$(\mathrm{Aut}(\overline{K}_1)\times\dots\times\mathrm{Aut}(\overline{K}_n))
/(\mathrm{Inn}(\overline{K}_1)\times\dots\times\mathrm{Inn}(\overline{K}_n))
\simeq\mathrm{Out}(\overline{K}_1)\times\dots\times\mathrm{Out}(\overline{K}_n)$$
Now $G/K\simeq\overline{G}/\overline{K}\simeq\varphi(\overline{G})/\varphi(\overline{K}) $ can be viewed as subgroup of    $\mathrm{Out}(\overline{K}_1)\times\dots\times\mathrm{Out}(\overline{K}_n)\in\mathfrak{S}$. Hence every chief factor of $G$ above $K$ is soluble and, hence, $\mathfrak{F}$-central in $G$.

  $(2), (3)\Rightarrow(1)$ From $(2)$ or $(c)$ of $(3)$ it follows that
  a group $G$ has a chief series such that  $\overline{H}\not\in\mathfrak{F}$ and $gr(\overline{H}, G)\in\mathcal{R}(\overline{H})$ for  every its $\mathfrak{F}$-eccentric chief factor $\overline{H}$.
  By Jordan-H\"{o}lder theorem and Lemma \ref{Lgr} it follows that every chief series of $G$ has this property. Thus $G\in\mathfrak{F}$.

   $(1)\Rightarrow(3)$  Assume now that $G\in\mathfrak{F}(\mathcal{R})$. $(a)$ Note that every chief factor of $G$ above $G^{E\mathfrak{F}}$ is an $\mathfrak{F}$-group and hence $\mathfrak{F}$-central in $G$ by the definition of $\mathfrak{F}(\mathcal{R})$. So $\mathrm{Z}_\mathfrak{F}(G/G^{E\mathfrak{F}})=G/G^{E\mathfrak{F}}$. Therefore $G/G^{E\mathfrak{F}}\in\mathfrak{F}$ and $G^\mathfrak{F}\leq G^{E\mathfrak{F}}$. From $\mathfrak{F}\subseteq{E\mathfrak{F}}$ it follows that $G^{E\mathfrak{F}}\leq G^{\mathfrak{F}}$. Thus $G^\mathfrak{F}= G^{E\mathfrak{F}}$.

$(b)$ Note that $H_{E\mathfrak{F}}\leq \mathrm{Z}_\mathfrak{F}(H)$ for every $\mathfrak{F}(\mathcal{R})$-group $H$.   By \cite[Corollary 2.3.1]{z}, $G^\mathfrak{F}\leq C_G(\mathrm{Z}_\mathfrak{F}(G))$. Hence $G^\mathfrak{F}\cap\mathrm{Z}_\mathfrak{F}(G)=\mathrm{Z}(G^\mathfrak{F})$. So if $N\trianglelefteq G$ and $N\leq G^\mathfrak{F}$, then $(G^\mathfrak{F}/N)_{E\mathfrak{F}}=\mathrm{Z}(G^\mathfrak{F}/N)$.

$(c)$ Let $n$ be the least number such that there is a simple  non-$\mathfrak{F}$-group in $S_{n+1}$ and  $T=G^\mathfrak{F}\cap Z(G, \mathcal{R}, \mathfrak{F}, n)$. Let $N/T=\overline{N}$ be a minimal normal subgroup  of $\overline{G}=G/T$ that lies in $\overline{G^\mathfrak{F}}=G^\mathfrak{F}/T$.  From the definition of $Z(G, \mathcal{R}, \mathfrak{F}, n)$ it follows that $\overline{N}\not\in\mathfrak{F}$ and $r(\overline{N}, G)\leq n$. Note that $(\overline{G^\mathfrak{F}})_\mathfrak{S}\simeq 1 $ by $\mathfrak{N}\subseteq\mathfrak{F}$. Hence $\Phi(\overline{G^\mathfrak{F}})\simeq 1$. From Lemma \ref{l3} it follows that $\mathrm{Soc}(\overline{G})\cap \overline{G^\mathfrak{F}}=\mathrm{Soc}(\overline{G^\mathfrak{F}})=\mathrm{\tilde{F}}(\overline{G^\mathfrak{F}})$.
Note that $\overline{N}=\overline{N}_1\times\dots\times\overline{N}_k$ is a direct product of isomorphic simple non-abelian groups. Recall that $k\leq n$.

By \cite[1.1.20]{s9} $\mathrm{Aut}(\overline{N})\simeq \mathrm{Aut}(\overline{N}_1)\wr S_k$.  Now every subgroup of $\mathrm{Aut}(\overline{N})/\overline{N}$ belongs $  E\mathfrak{F}$ by the definition of $n$,   Schreier conjecture and $\mathfrak{S}\subseteq E\mathfrak{F}$.    The same arguments as in $(1)\Rightarrow(2)$ shows that $\overline{G^\mathfrak{F}}/\mathrm{\tilde{F}}(\overline{G^\mathfrak{F}})\in E\mathfrak{F}$.
From $G^\mathfrak{F}=G^{E\mathfrak{F}}$ it follows that  $\overline{G^\mathfrak{F}}/\mathrm{\tilde{F}}(\overline{G^\mathfrak{F}})=
\overline{G^\mathfrak{F}}/(\mathrm{Soc}(\overline{G})\cap \overline{G^\mathfrak{F}})\simeq 1 $.
    \end{proof}

\begin{proof}[{\bf Proof of Corollaries \ref{c21}, \ref{c22}, \ref{c23}, \ref{c24} and \ref{c25}}]

Note that Corollary \ref{c21} directly follows from Corollary \ref{c22}.

Let $\mathcal{R}(H)\equiv(\emptyset, \{1\})$ and $G\in\mathfrak{F}^*$. Then $\mathfrak{F}(\mathcal{R})=\mathfrak{F}^*$ and $Z=Z(G, \mathcal{R}, \mathfrak{F}, 4)=\mathrm{Z}_\mathfrak{F}(G)$.
According to $(1)$ of the proof of Theorem \ref{T2} $(G/Z)/\mathrm{Soc}(G/Z)$ is isomorphic to a subgroup of the outer automorphisms  group of  $\mathrm{Soc}(G/Z)$ induced by $G$. Note that in this case $G$ induces  inner automorphism groups on every minimal normal subgroup of $G/Z$. It means that $G/Z\simeq 1$. Thus Corollary \ref{c22} is proved.

  Let $\mathfrak{F}$ be a hereditary saturated formation,  $\mathfrak{N}\subseteq\mathfrak{F}\subseteq\mathfrak{S}$ and $\mathcal{R}(H)\equiv(\{1\},\emptyset)$. Then $\mathfrak{F}(\mathcal{R})=\mathfrak{F}_{ca}$. Note that $E\mathfrak{F}=\mathfrak{S}$.  Let $n$ be the least number such that there is a simple  non-$\mathfrak{F}$-section in $S_{n+1}$. It is clear that $n=4$. So $Z(G, R, \mathfrak{F}, n)=\mathrm{Z}_\mathfrak{F}(G)$. Now $G^\mathfrak{F}\cap Z(G, R, \mathfrak{F}, n)=G^\mathfrak{F}\cap\mathrm{Z}_\mathfrak{F}(G)\leq (G^\mathfrak{F})_{E\mathfrak{F}}$. Thus Corollary \ref{c23} directly follows from $(3)$ of Theorem \ref{T2}.

   Note that the class of all widely $c$-supersoluble groups $\mathfrak{U}_{cw}=(w\mathfrak{U})_{ca}$.
  So Corollary \ref{c25} directly follows from Corollary \ref{c23}.

  Recall that a normal subgroup $H$ of a group $G$ is supersoluble embedded  iff $H\leq \mathrm{Z}_\mathfrak{U}(G)$; $E\mathfrak{U}=\mathfrak{S}$ and $(G^\mathfrak{S})'=G^\mathfrak{S}$. Now  Corollary \ref{c24} directly follows from Corollary \ref{c23}.
\end{proof}

\subsection{On one question of L.\,A. Shemetkov}

The main result of this section that makes the contribution to the solution of Question \ref{q1} is:

\begin{theorem}\label{T3}
  Let $\mathfrak{F}$ be a hereditary saturated formation containing all nilpotent groups, $m$ be a natural number with $\mathfrak{G}_{\{q\in\mathbb{P}\,|\,q\leq m\}}\subseteq\mathfrak{F}$,     $\mathcal{R}$ be a very good generalized rank function such that $\mathcal{R}(N)\subseteq[0, m]$ for any simple group $N$. Then the following statements are equivalent:

  $(1)$ $\mathrm{Z}_{\mathfrak{F}}(G)= \mathrm{Int}_{\mathfrak{F}}(G)$ holds for every group $G$ and
   \begin{center} $\bigcup_{n=1}^m(\mathrm{Out}(G)\wr S_n\,|\,G\not\in\mathfrak{F}$ is a simple group and $n\in A_\mathcal{R}(G))\subseteq\mathfrak{F}$.
   \end{center}

  $(2)$ $\mathrm{Z}_{\mathfrak{F}(\mathcal{R})}(G)= \mathrm{Int}_{\mathfrak{F}(\mathcal{R})}(G)$ holds for every group $G$.
\end{theorem}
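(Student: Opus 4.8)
The plan is to reduce Theorem \ref{T3} to the corresponding result for hereditary saturated formations (Skiba's theorem, cited as \cite{h4, h5}) together with the structure theorem for $\mathfrak{F}(\mathcal{R})$-groups established in Theorem \ref{T2}. First I would prove $(1)\Rightarrow(2)$. Given a group $G$, I would set $Z=\mathrm{Z}_{\mathfrak{F}(\mathcal{R})}(G)$. Since $\mathfrak{F}(\mathcal{R})$ is a composition formation by Theorem \ref{MT1}(1), Lemma \ref{lz} gives $Z=\mathrm{Z}_{\mathfrak{F}(\mathcal{R})}(G)$ is the largest normal subgroup all of whose chief factors are $\mathfrak{F}(\mathcal{R})$-central; the inclusion $Z\leq\mathrm{Int}_{\mathfrak{F}(\mathcal{R})}(G)$ is the standard easy direction (every chief factor of $G$ below $Z$ lies in $\mathrm{Z}_{\mathfrak{F}(\mathcal{R})}(U)$ for any $\mathfrak{F}(\mathcal{R})$-maximal $U$, using normal heredity of $\mathfrak{F}(\mathcal{R})$, which holds since a very good rank function is good). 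For the reverse inclusion I would take an $\mathfrak{F}(\mathcal{R})$-maximal subgroup $U$ and analyze $D=\mathrm{Int}_{\mathfrak{F}(\mathcal{R})}(G)$, showing every chief factor of $G$ below $D$ is $\mathfrak{F}(\mathcal{R})$-central. An abelian chief factor $\overline{H}$ below $D$ is handled by the hereditary-saturated case: $F_{\mathcal{R}}(p)=F(p)$ where $F$ is the canonical local definition of $\mathfrak{F}$, so $\mathfrak{F}(\mathcal{R})$-centrality of $\overline{H}$ is exactly $\mathfrak{F}$-centrality, and one recovers it from $\mathrm{Z}_\mathfrak{F}(G)=\mathrm{Int}_\mathfrak{F}(G)$ by comparing the two intersections (maximal $\mathfrak{F}$-subgroups versus maximal $\mathfrak{F}(\mathcal{R})$-subgroups — here the bound $\mathfrak{G}_{\{q\leq m\}}\subseteq\mathfrak{F}$ and $\mathcal{R}(N)\subseteq[0,m]$ are used to keep the relevant sections inside $\mathfrak{F}$). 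A non-abelian chief factor $\overline{H}=\overline{H}_1\times\cdots\times\overline{H}_k$ below $D$ with $\overline{H}_i$ simple of order coprime to the structure: by Lemma \ref{l3} it is a product of minimal normal subgroups, and I would use the very-good hypothesis on $\mathcal{R}$ plus the second condition in $(1)$ — $\mathrm{Out}(\overline{H}_1)\wr S_k\in\mathfrak{F}$ whenever $\overline{H}_1\notin\mathfrak{F}$, $k\in A_\mathcal{R}(\overline{H}_1)$ — to force $G/C_G(\overline{H})\in\mathfrak{F}$ (via the embedding into $\mathrm{Aut}(\overline{H}_1)\wr S_k$ and Schreier), hence $\overline{H}$ is $\mathfrak{F}$-central, hence $\mathfrak{F}(\mathcal{R})$-central; the rank condition $r(\overline{H},G)=k\in\mathcal{R}(\overline{H})$ is automatic from membership and the composition-factor (inner-automorphism) part of $gr$ follows because $G$ induces inner automorphisms on each $\overline{H}_i$ once the outer part is killed.

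For $(2)\Rightarrow(1)$ I would argue by specialization. Taking $\mathcal{R}$ with $\mathcal{R}(S)\subseteq[0,1]$ reduces $\mathfrak{F}(\mathcal{R})$ to $\mathfrak{F}$ on the abelian part, but more carefully: Remark (3) gives $Z(G,\mathcal{R},\mathfrak{F},n)=\mathrm{Z}_\mathfrak{F}(G)$ for $n>1$ when $\mathcal{R}$ lands in $[0,1]$, and by choosing $G$ with no non-abelian chief factors of the forbidden type one extracts $\mathrm{Z}_\mathfrak{F}(G)=\mathrm{Int}_\mathfrak{F}(G)$ from $(2)$. To get the second assertion of $(1)$, I would, for each simple $S\notin\mathfrak{F}$ and each $n\in A_\mathcal{R}(S)$ with $n\leq m$, exhibit a group $G$ realizing $\mathrm{Out}(S)\wr S_n$ acting on $S^n$ and observe that if $\mathrm{Out}(S)\wr S_n\notin\mathfrak{F}$ then $G\notin\mathfrak{F}(\mathcal{R})$ yet $G$ would have to lie in $\mathrm{Int}_{\mathfrak{F}(\mathcal{R})}(\widehat G)$ for a suitable larger $\widehat G$ (or directly $G=\mathrm{Int}_{\mathfrak{F}(\mathcal{R})}(G)\neq\mathrm{Z}_{\mathfrak{F}(\mathcal{R})}(G)$), contradicting $(2)$. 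The construction of this witness group is the analogue of Skiba's Example~5.17 for supersolubility.

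The main obstacle I anticipate is the non-abelian part of $(1)\Rightarrow(2)$: controlling how an arbitrary $\mathfrak{F}(\mathcal{R})$-maximal subgroup $U$ meets a non-abelian chief factor $\overline{H}=\overline{H}_1\times\cdots\times\overline{H}_k$ of $G$ below $D=\mathrm{Int}_{\mathfrak{F}(\mathcal{R})}(G)$. One must show that $U$ (hence its normal closure, hence $G$) induces only inner automorphisms on each $\overline{H}_i$ and that the permutation action on the $k$ factors together with the outer part stays inside $\mathfrak{F}$; here the very-good (rather than merely good) hypothesis is essential, since it guarantees that every $b\leq k$ also lies in $B_\mathcal{R}(S)$ or $A_\mathcal{R}(S)$, so that sections of $\overline{H}$ seen by $U$ are still admissible, and the $S_n$-wreath condition in $(1)$ is exactly what is needed to make $G/C_G(\overline{H})$ an $\mathfrak{F}$-group. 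Assembling these pieces — the abelian case by Skiba, the non-abelian case by the wreath condition and Theorem \ref{T2}, glued by Lemma \ref{lz} and the identity $F_\mathcal{R}(p)=F(p)$ — yields $\mathrm{Int}_{\mathfrak{F}(\mathcal{R})}(G)\leq\mathrm{Z}_{\mathfrak{F}(\mathcal{R})}(G)$ and completes the equivalence.
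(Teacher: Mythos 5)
Your overall architecture (abelian chief factors via the saturated case, non-abelian ones via the wreath condition, a witness group for the converse) matches the paper's, but two steps you treat as easy are in fact the substantive content, and as written they are gaps.

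First, in $(1)\Rightarrow(2)$ you assert that for a non-abelian chief factor $\overline{H}=\overline{H}_1\times\dots\times\overline{H}_k$ below $D=\mathrm{Int}_{\mathfrak{F}(\mathcal{R})}(G)$ the condition $r(\overline{H},G)=k\in\mathcal{R}(\overline{H})$ is ``automatic from membership.'' It is not. An $\mathfrak{F}(\mathcal{R})$-maximal subgroup $M\geq H$ generally does \emph{not} act transitively on the simple components, so by Lemma \ref{l3} the factor $\overline{H}$ splits into several minimal normal subgroups of $M/K$ of strictly smaller rank; membership of $M$ in $\mathfrak{F}(\mathcal{R})$ then only constrains those smaller ranks, and the ``very good'' hypothesis propagates admissibility \emph{downward}, never up to $k$. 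The paper isolates this as Theorem \ref{t31}(2): one must manufacture a specific $\mathfrak{F}(\mathcal{R})$-maximal subgroup $Q$ in which $\overline{H}$ stays a single minimal normal subgroup, by showing $G/T\in\mathfrak{G}_\pi$ for $\pi=\{q\leq m\}$ (via \cite[1.1.40(6)]{s9} applied inside each $\mathfrak{F}(\mathcal{R})$-maximal subgroup), lifting a $\pi$-supplement $L$ of $T$ by \cite[1, 5.7]{s5}, and using $\mathfrak{G}_\pi\subseteq\mathfrak{F}$ to place $L$ inside some $Q$ with $LH\leq Q$. This is precisely where the hypotheses $\mathfrak{G}_{\{q\leq m\}}\subseteq\mathfrak{F}$ and $\mathcal{R}(N)\subseteq[0,m]$ are consumed — not, as you suggest, in the abelian case.

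Second, in $(2)\Rightarrow(1)$ your derivation of $\mathrm{Z}_{\mathfrak{F}}(G)=\mathrm{Int}_{\mathfrak{F}}(G)$ by ``choosing $G$ with no non-abelian chief factors of the forbidden type'' does not work: the conclusion must hold for \emph{every} group $G$, and for a general $G$ the $\mathfrak{F}$-maximal and $\mathfrak{F}(\mathcal{R})$-maximal subgroups are different objects. (You also cannot re-choose $\mathcal{R}$; it is fixed by the hypothesis.) The paper's route is structural: $\mathfrak{F}(\mathcal{R})_l=\mathfrak{F}$ by Proposition \ref{t32}, and then Theorem \ref{t3}(2) transfers the identity $\mathrm{Int}=\mathrm{Z}$ from a solubly saturated formation to its largest saturated subformation. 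Your witness construction for the wreath condition is essentially the paper's $T=\mathrm{Aut}(S)\wr S_n$, but note you only reach $\mathrm{Out}(S)\wr S_n\in\mathfrak{F}(\mathcal{R})$; the final downgrade to membership in $\mathfrak{F}$ needs the observation that $(T/N)^{E\mathfrak{F}}=1$ (Schreier plus $A_k,\,k\leq m$, lying in $\mathfrak{G}_{\{q\leq m\}}\subseteq\mathfrak{F}$) combined with Theorem \ref{T2}(3)(a).
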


\begin{corollary}[{\cite[Theorem 1]{ArX}}]\label{c30}
Let $\mathfrak{F}$ be a hereditary saturated formation containing all nilpotent groups. Then   $\mathrm{Int}_\mathfrak{F}(G)=\mathrm{Z}_\mathfrak{F}(G)$ holds for every group $G$  if and only if  $\mathrm{Int}_{\mathfrak{F}^*}(G)=\mathrm{Z}_{\mathfrak{F}^*}(G)$ holds for every group $G$.
\end{corollary}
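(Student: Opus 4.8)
The plan is to obtain this corollary as the special case of Theorem \ref{T3} obtained by choosing the generalized rank function $\mathcal{R}$ with $\mathcal{R}(H)\equiv(\emptyset,\{1\})$, the choice already recorded in Example \ref{ex1}(8) and in the proof of Corollary \ref{c13}, for which $\mathfrak{F}(\mathcal{R})=\mathfrak{F}^*$. Since a hereditary formation is in particular normally hereditary, this identification $\mathfrak{F}(\mathcal{R})=\mathfrak{F}^*$ is available for the formation $\mathfrak{F}$ of the statement. Consequently statement $(2)$ of Theorem \ref{T3} for this $\mathcal{R}$ reads exactly $\mathrm{Z}_{\mathfrak{F}^*}(G)=\mathrm{Int}_{\mathfrak{F}^*}(G)$ for every group $G$, which is the second assertion of the corollary.

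First I would verify that $\mathcal{R}$ meets the hypotheses of Theorem \ref{T3}. Here $A_\mathcal{R}(S)=\emptyset$ and $B_\mathcal{R}(S)=\{1\}$ for every simple group $S$, so condition $(a)$ for ``very good'' is vacuous and condition $(b)$ holds because $1$ is the only element of $B_\mathcal{R}(S)$ and the only natural number $b\leq 1$ is $b=1\in B_\mathcal{R}(S)$; hence $\mathcal{R}$ is very good. Moreover $A_\mathcal{R}(S)\cup B_\mathcal{R}(S)=\{1\}\subseteq[0,1]$, so one may take $m=1$. With $m=1$ the set $\{q\in\mathbb{P}\mid q\leq 1\}$ is empty, whence $\mathfrak{G}_{\{q\in\mathbb{P}\,|\,q\leq 1\}}=(1)\subseteq\mathfrak{F}$, and all hypotheses of Theorem \ref{T3} are satisfied.

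Next I would simplify statement $(1)$ of Theorem \ref{T3} for this $\mathcal{R}$ and $m$. The wreath-product condition appearing there is a union over those $n\in\{1,\dots,m\}$ with $n\in A_\mathcal{R}(G)$ for some simple non-$\mathfrak{F}$-group $G$; but $A_\mathcal{R}(G)=\emptyset$ for every simple $G$, so no admissible index $n$ exists and the union is the empty class, which is trivially contained in $\mathfrak{F}$. Therefore statement $(1)$ collapses to its first conjunct alone, namely $\mathrm{Z}_\mathfrak{F}(G)=\mathrm{Int}_\mathfrak{F}(G)$ for every $G$, which is the first assertion of the corollary.

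Finally, applying the equivalence $(1)\Leftrightarrow(2)$ of Theorem \ref{T3} to this specialization yields precisely the asserted equivalence between $\mathrm{Int}_\mathfrak{F}(G)=\mathrm{Z}_\mathfrak{F}(G)$ for all $G$ and $\mathrm{Int}_{\mathfrak{F}^*}(G)=\mathrm{Z}_{\mathfrak{F}^*}(G)$ for all $G$. The only point requiring care is the bookkeeping at the boundary: confirming that the wreath-product condition genuinely vanishes, so that no extra constraint survives on the $\mathfrak{F}$-side, and that the minimal admissible value $m=1$ is simultaneously compatible with $\mathcal{R}(N)\subseteq[0,m]$ and with $\mathfrak{G}_{\{q\leq m\}}\subseteq\mathfrak{F}$. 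Once these verifications are in place the corollary is immediate and needs no further computation.
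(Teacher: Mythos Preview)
Your proposal is correct and follows essentially the same route as the paper: choose $\mathcal{R}(S)\equiv(\emptyset,\{1\})$, observe that $\mathcal{R}$ is very good and that $\mathfrak{F}(\mathcal{R})=\mathfrak{F}^*$, and then invoke Theorem~\ref{T3}. You simply make explicit the bookkeeping (the choice $m=1$, the vacuity of the wreath-product condition because $A_\mathcal{R}(G)=\emptyset$) that the paper leaves to the reader.
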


\begin{corollary}\label{c31}
  Let $\mathcal{R}$ be a very good generalized rank function. Then  $\mathrm{Z}_{\mathfrak{N}(\mathcal{R})}(G)= \mathrm{Int}_{\mathfrak{N}(\mathcal{R})}(G)$ holds for every group $G$ if and only if for any simple non-abelian group $N$ holds:

   $(1)$  $\mathcal{R}(N)\subseteq[0, 2]$;

   $(2)$  if $1\in A_\mathcal{R}(N)$, then $\mathrm{Out}(N)$ is nilpotent;

   $(3)$ if $2\in A_\mathcal{R}(N)$, then $\mathrm{Out}(N)$ is a $2$-group.
\end{corollary}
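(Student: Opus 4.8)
The plan is to read off Corollary \ref{c31} from Theorem \ref{T3} with $\mathfrak{F}=\mathfrak{N}$ and $m=2$. Note first that $\mathfrak{N}$ is a hereditary saturated formation containing all nilpotent groups and that $\mathfrak{G}_{\{q\in\mathbb{P}\,|\,q\leq 2\}}=\mathfrak{G}_{\{2\}}$, the class of $2$-groups, is contained in $\mathfrak{N}$; so the hypotheses of Theorem \ref{T3} hold for $(\mathfrak{F},m)=(\mathfrak{N},2)$ as soon as $\mathcal{R}(N)\subseteq[0,2]$ for every simple group $N$. Since an abelian chief factor lies in $\mathfrak{N}$, it can never be an $\mathfrak{N}$-eccentric factor of an $\mathfrak{N}(\mathcal{R})$-group in the sense of Definition \ref{MD1}; hence the values of $\mathcal{R}$ on abelian simple groups do not affect $\mathfrak{N}(\mathcal{R})$, and replacing $\mathcal{R}$ by the very good function $\mathcal{R}'$ that agrees with $\mathcal{R}$ on non-abelian simple groups and has $\mathcal{R}'(\mathbb{Z}_p)=(\emptyset,\emptyset)$ changes neither $\mathfrak{N}(\mathcal{R})$ nor $A_{\mathcal{R}}(S)$ for non-abelian $S$. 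Thus it is enough to treat the case $\mathcal{R}(N)\subseteq[0,2]$ for all simple $N$, where Theorem \ref{T3} tells us that the equality $\mathrm{Z}_{\mathfrak{N}(\mathcal{R})}(G)=\mathrm{Int}_{\mathfrak{N}(\mathcal{R})}(G)$ for all $G$ is equivalent to the conjunction of $(i)$ $\mathrm{Z}_{\mathfrak{N}}(G)=\mathrm{Int}_{\mathfrak{N}}(G)$ for all $G$ and $(ii)$ $\bigcup_{n=1}^{2}(\mathrm{Out}(G)\wr S_n\,|\,G\not\in\mathfrak{N}$ simple and $n\in A_{\mathcal{R}}(G))\subseteq\mathfrak{N}$. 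Condition $(i)$ is Baer's theorem quoted in the Introduction, so everything reduces to analysing $(ii)$.

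For the implication ``$(1)$--$(3)\Rightarrow$ equality'' I would assume $(1)$--$(3)$, pass to $\mathcal{R}'$ using $(1)$, and check $(ii)$: by $(1)$ the index $n$ is $1$ or $2$; for $n=1$, $\mathrm{Out}(G)\wr S_1\cong\mathrm{Out}(G)\in\mathfrak{N}$ by $(2)$, and for $n=2$, $(3)$ makes $\mathrm{Out}(G)$ a $2$-group, so $\mathrm{Out}(G)\wr S_2$ is a $2$-group and hence nilpotent. Then Theorem \ref{T3} yields the equality. For the converse I would split the argument into two steps. Step B is easy once $(1)$ is known: if the equality holds and $\mathcal{R}(N)\subseteq[0,2]$ for all $N$, Theorem \ref{T3} gives $(ii)$; evaluating $(ii)$ at $n=1$ gives $\mathrm{Out}(N)\in\mathfrak{N}$ whenever $1\in A_{\mathcal{R}}(N)$, which is $(2)$, and at $n=2$ gives $\mathrm{Out}(N)\wr S_2\in\mathfrak{N}$ whenever $2\in A_{\mathcal{R}}(N)$; here I invoke the elementary fact that $A\wr\mathbb{Z}_2$ nilpotent forces $A$ to be a $2$-group (an element of odd prime order $p$ in $A$ would produce a section $\mathbb{Z}_p\wr\mathbb{Z}_2$, a non-nilpotent group of order $2p^2$), which gives $(3)$.

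Step A is the real content: one must show that $(1)$ is forced. Suppose some non-abelian simple $N$ has $k\in A_{\mathcal{R}}(N)\cup B_{\mathcal{R}}(N)$ with $k\geq 3$; by very-goodness $1,2,3$ all lie in $A_{\mathcal{R}}(N)\cup B_{\mathcal{R}}(N)$ (in the same one of the two sets, with the inner-automorphism clause preserved when it is $B$). I would take $W=N\wr S_3=N^3\rtimes S_3$ and argue: $(a)$ the chief factor of $W$ isomorphic to $\mathbb{Z}_3$ that lies between $N^3$ and $W$ is abelian but $\mathfrak{N}$-eccentric, since $\mathbb{Z}_3\rtimes(W/C_W(\mathbb{Z}_3))\cong S_3\notin\mathfrak{N}$; hence $W\notin\mathfrak{N}(\mathcal{R})$, the same holds for $N^3\rtimes(W/C_W(N^3))=N^3\rtimes W$, so $N^3$ is $\mathfrak{N}(\mathcal{R})$-eccentric in $W$, and since the only normal subgroups of $W$ are $1,N^3,N^3\langle(123)\rangle,W$ we get $\mathrm{Z}_{\mathfrak{N}(\mathcal{R})}(W)=1$; and $(b)$ every $\mathfrak{N}(\mathcal{R})$-maximal subgroup $M$ of $W$ contains $N^3$: the quotient $M/(M\cap N^3)\cong MN^3/N^3$ embeds in $S_3$ and cannot equal $S_3$ (otherwise $S_3$, which is not an $\mathfrak{N}(\mathcal{R})$-group, would be a quotient of $M\in\mathfrak{N}(\mathcal{R})$), so $MN^3$ is one of $N^3$, $N\wr\mathbb{Z}_3$, $N^3\rtimes\langle\tau\rangle$ with $\tau$ a transposition; each of these lies in $\mathfrak{N}(\mathcal{R})$ because its chief factors $N^3$ (rank $3$), $N_1N_2$ (rank $2$), $N_i$ (rank $1$) have admissible generalized rank by $1,2,3\in A_{\mathcal{R}}(N)\cup B_{\mathcal{R}}(N)$ while the abelian top factors are $\mathfrak{N}$-central; by $\mathfrak{N}(\mathcal{R})$-maximality $M=MN^3\supseteq N^3$. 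Then $\mathrm{Int}_{\mathfrak{N}(\mathcal{R})}(W)\supseteq N^3>1=\mathrm{Z}_{\mathfrak{N}(\mathcal{R})}(W)$, contradicting the hypothesis, so $(1)$ must hold.

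The main obstacle is Step A, and inside it the verification in $(b)$ that the three proper subgroups $N\wr\mathbb{Z}_3$, $N^3\rtimes\mathbb{Z}_2$ and $N^3$ belong to $\mathfrak{N}(\mathcal{R})$ while $W$ itself does not: this is precisely where the numerical hypotheses $1,2,3\in A_{\mathcal{R}}(N)\cup B_{\mathcal{R}}(N)$ (obtained from $k\geq 3$ via very-goodness) are used, and it is the point at which the bound $m=2$ enters through the fact that $S_3$ fails to be nilpotent while each of its proper subgroups of order larger than $1$ is a $p$-group.
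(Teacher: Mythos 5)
Your proposal is correct, and its overall skeleton (read everything off from Theorem \ref{T3} with $\mathfrak{F}=\mathfrak{N}$, $m=2$, using Baer's theorem for $\mathrm{Int}_{\mathfrak{N}}=\mathrm{Z}_{\mathfrak{N}}$ and the elementary observation that $A\wr S_2$ nilpotent forces $A$ to be a $2$-group) matches the paper's. The one place where you genuinely diverge is your Step A, the proof that the equality forces $\mathcal{R}(N)\subseteq[0,2]$. The paper gets this by quoting Theorem \ref{t31}$(1)$: it observes that $S_3$ is a minimal non-nilpotent group with $\mathrm{F}(S_3)=\tilde{\mathrm{F}}(S_3)$ whose largest maximal subgroup has order $3$, computes the constant $C_1=2$, and concludes. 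You instead build the counterexample by hand: assuming some $k\geq 3$ lies in $\mathcal{R}(N)$, you use very-goodness to pull $1,2,3$ into $\mathcal{R}(N)$ and show that $W=N\wr S_3$ has $\mathrm{Z}_{\mathfrak{N}(\mathcal{R})}(W)=1$ while $N^3\leq\mathrm{Int}_{\mathfrak{N}(\mathcal{R})}(W)$, because every $\mathfrak{N}(\mathcal{R})$-maximal subgroup must cover $N^3$ (its image in $S_3$ cannot be all of $S_3$, and each of $N^3$, $N\wr\mathbb{Z}_3$, $N^3\rtimes\langle\tau\rangle$ is an $\mathfrak{N}(\mathcal{R})$-group by the rank and inner-automorphism checks you carry out). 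This is essentially a degree-$3$ specialization of the regular wreath product $S\wr_{reg}G$ used inside the paper's proof of Theorem \ref{t31}, so it buys self-containedness and concreteness at the cost of redoing work the paper has already packaged into a lemma; I verified your rank computations and the eccentricity of $N^3$ in $W$, and they are sound. A small additional merit of your write-up is the explicit reduction replacing $\mathcal{R}$ on abelian simple groups by $(\emptyset,\emptyset)$, which reconciles the hypothesis $\mathcal{R}(N)\subseteq[0,m]$ of Theorem \ref{T3} (stated for all simple $N$) with the corollary's conditions, which only constrain non-abelian $N$; the paper passes over this point silently.
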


\begin{example}
  Let $\mathfrak{F}_1$ (resp. $\mathfrak{F}_2$) be a class of groups whose abelian chief factors are central and non-abelian chief factors are arbitrary (resp. are directs products of at most 2 alternating groups). Then $\mathrm{Z}_{\mathfrak{N}^*}(G)= \mathrm{Int}_{\mathfrak{N}^*}(G)$ and $\mathrm{Z}_{\mathfrak{F}_2}(G)= \mathrm{Int}_{\mathfrak{F}_2}(G)$  hold for every group $G$ and there exist groups $G_1$ and $G_2$ with   $\mathrm{Z}_{\mathfrak{N}_{ca}}(G_1)\neq \mathrm{Int}_{\mathfrak{N}_{ca}}(G_1)$ and $\mathrm{Z}_{\mathfrak{F}_1}(G_2)\neq \mathrm{Int}_{\mathfrak{F}_1}(G_2)$.
\end{example}

It is important to mention that if $\mathrm{Z}_{\mathfrak{F}(\mathcal{R})}(G)= \mathrm{Int}_{\mathfrak{F}(\mathcal{R})}(G)$ holds for every group $G$, then $\mathcal{R}$ is bounded:

\begin{theorem}\label{t31}
   Let $\mathfrak{F}\neq\mathfrak{G}$ be a hereditary saturated formation containing all nilpotent groups and $\mathcal{R}$ be a very good generalized rank function.

   $(1)$ Assume that    $\mathrm{Z}_{\mathfrak{F}(\mathcal{R})}(G)= \mathrm{Int}_{\mathfrak{F}(\mathcal{R})}(G)$ holds for every group $G$. Let
   $$ C_1=\min_{G\in\mathcal{M}(\mathfrak{F})\textrm{ with }\mathrm{F}(G)=\mathrm{\tilde{F}}(G)}\max_{M \textrm{ is a maximal subgroup of }G}|M|-1.$$ Then $\mathcal{R}(S)\subseteq [0, C_1]$ for every simple group $S\not\in\mathfrak{F}$.

   $(2)$ Let
   $$C_2=\max{\{m\in\mathbb{N}\,|\,\mathfrak{G}_{\{q\in\mathbb{P}\,|\,q\leq m\}}\subseteq\mathfrak{F}\}}. $$
    If $\mathcal{R}(S)\subseteq [0, C_2]$ for every simple group $S\not\in\mathfrak{F}$, then $gr(\overline{H}, G)\in \mathcal{R}(\overline{H})$ for every $G$-composition factor $\overline{H}\not\in\mathfrak{F}$
    below $\mathrm{Int}_{\mathfrak{F}(\mathcal{R})}(G)$.
\end{theorem}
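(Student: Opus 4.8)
The plan is to study the $\mathfrak{X}$-maximal subgroups of a few explicit groups, where $\mathfrak{X}:=\mathfrak{F}(\mathcal{R})$. Since a very good generalized rank function is good, Theorem~\ref{MT1} makes $\mathfrak{X}$ a hereditary solubly saturated formation, so $\mathrm{Z}_{\mathfrak{X}}(G)\le\mathrm{Int}_{\mathfrak{X}}(G)$ for every $G$, $\mathfrak{X}$-covering subgroups exist in every group, and Theorem~\ref{T2} is available. The uniform device is an \emph{enlargement step}: if $U\le G$ lies in $\mathfrak{X}$ and $L\trianglelefteq\langle U,L\rangle$ has the property that every $\langle U,L\rangle$-chief factor of $L$ is a direct product of at most $C$ isomorphic simple groups, each of them either $\mathfrak{F}$-central in $\langle U,L\rangle$ or outside $\mathfrak{F}$ with $\langle U,L\rangle$ acting on it only by permuting the coordinate factors, then $UL\in\mathfrak{X}$, provided each rank so arising already lies in the relevant value of $\mathcal{R}$ and $\mathfrak{G}_{\{q\le C\}}\subseteq\mathfrak{F}$ — very-goodness forces the $gr$-data of the new chief factors into $\mathcal{R}$ (the inner-automorphism clause being free, a coordinate permutation fixing a coordinate acting trivially there), while the $\mathfrak{F}$-central factors stay $\mathfrak{F}$-central since their automisers embed in $\mathrm{Out}(\text{simple})\wr S_{\le C}$.

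\emph{Part (1).} Argue by contraposition: suppose $\mathcal{R}(S)\not\subseteq[0,C_1]$ for some simple $S$; then $S\notin\mathfrak{F}$ is non-abelian and, by very-goodness, $t:=C_1+1\in A_\mathcal{R}(S)\cup B_\mathcal{R}(S)$. Pick $G_0\in\mathcal{M}(\mathfrak{F})$ with $\mathrm{F}(G_0)=\tilde{\mathrm{F}}(G_0)$ attaining the minimum in the definition of $C_1$; then $\Phi(G_0)=1$ (otherwise $G_0/\Phi(G_0)$ gives a strictly smaller value), $G_0$ is monolithic with socle $N=G_0^{\mathfrak{F}}=C_{G_0}(N)$ elementary abelian and $G_0/N\in\mathfrak{F}$, and $G_0$ has, by the very choice of $t$ as its largest maximal-subgroup order, a transitive permutation representation of a degree $d$ with $C_1<d\le t$ (on the cosets of a largest maximal subgroup, or on the underlying set of $N$). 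Form $G=D\rtimes G_0$ with $D=S^{\,d}$, $G_0$ acting on $D$ purely by permuting coordinates through this representation. Then $D$ is a rank-$d$ chief factor of $G$; since $d\in\mathcal{R}(S)$, the enlargement step (with $L=D$, $C=d$) shows every $\mathfrak{X}$-subgroup missing $D$ is non-maximal, so $D\le\mathrm{Int}_{\mathfrak{X}}(G)$; and the chief factor of $G$ lying just above $D$ (coming from $N$) is $\mathfrak{X}$-eccentric, reducing by $G_0\notin\mathfrak{X}$ — and, if the minimising $G_0$ were itself in $\mathfrak{X}$, by instead making $G_0$ act on $D$ with an outer automorphism on one coordinate, which renders $D$ eccentric via the $B$-clause. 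A direct computation of $\mathrm{Int}_{\mathfrak{X}}(G)$ then gives $\mathrm{Int}_{\mathfrak{X}}(G)\supsetneq D\ge\mathrm{Z}_{\mathfrak{X}}(G)$, against the hypothesis.

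\emph{Part (2).} Let $\overline{H}=H/K\notin\mathfrak{F}$ be a $G$-chief factor with $H\le I:=\mathrm{Int}_{\mathfrak{X}}(G)$; it is non-abelian, so $\overline{H}=\overline{S}_1\times\dots\times\overline{S}_r$ with $\overline{S}_i\cong S\notin\mathfrak{F}$ and $r=r(\overline{H},G)$. I would prove $gr(\overline{H},G)\in\mathcal{R}(\overline{H})$ by contraposition. If $r\notin\mathcal{R}(S)$ then, $\mathcal{R}$ being very good, $\mathcal{R}(S)=\{1,\dots,m_S\}$ with $m_S<r\le C_2$: using that the non-abelian factor $\overline{H}$ is non-Frattini, split $G/K=\overline{H}\rtimes(W/K)$, and produce inside $G$ an $\mathfrak{X}$-subgroup whose image in $\mathrm{Sym}(\{\overline{S}_1,\dots,\overline{S}_r\})$ has an orbit of length $>m_S$ — a Sylow $p$-subgroup of $G$ (nilpotent, hence in $\mathfrak{X}$) does the job when some prime $p\mid r$ satisfies $p^{v_p(r)}>m_S$; in the remaining case, where all such prime powers are $\le m_S$, the permutation image $\overline{G}\le S_r$ is controlled by $C_2$ and, via $\mathfrak{G}_{\{q\le C_2\}}\subseteq\mathfrak{F}$ and an $\mathfrak{X}$-covering subgroup of $G$, one again produces an $\mathfrak{X}$-maximal subgroup whose image is not \emph{orbit-small}. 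Extending to an $\mathfrak{X}$-maximal $V$, one sees $V$ cannot contain $\overline{H}$, since then its chief factors inside $\overline{H}$ would be products of at most $m_S$ copies of $S$, contradicting the long orbit; hence $H\not\le V$, against $H\le I$. The inner-automorphism clause is handled the same way, with an element acting as an outer automorphism on a normalised $\overline{S}_j$ and the rank-$1$ factor $\overline{S}_j$ playing the roles of the long-orbit element and $\overline{H}$. Here $\mathcal{R}(S)\subseteq[0,C_2]$ bounds the admissible orbit lengths and $\mathfrak{G}_{\{q\le C_2\}}\subseteq\mathfrak{F}$ legitimises the enlargements.

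The step I expect to be the real obstacle, in both parts, is turning the enlargement heuristic into a rigorous description of the $\mathfrak{X}$-maximal subgroups of the semidirect products $S^{\,n}\rtimes(\text{group})$ in play — proving in part~(1) that an $\mathfrak{X}$-subgroup can always be enlarged to swallow the whole of $D$, and in part~(2) the opposite, that an $\mathfrak{X}$-subgroup carrying a long orbit stays $\mathfrak{X}$-maximal without being forced to swallow $\overline{H}$ — together with pinning down precisely where the numerical hypotheses (very-goodness, the bounds $\mathcal{R}(S)\subseteq[0,C_i]$, and $\mathfrak{G}_{\{q\le C_2\}}\subseteq\mathfrak{F}$) are consumed. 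The two flagged sub-cases — the minimising $G_0$ already lying in $\mathfrak{X}$, and $r$ having no prime power divisor exceeding $m_S$ — are where a naive argument breaks down and an extra idea is needed; the whole analysis is essentially a ``coordinate-orbit'' refinement of the subgroup techniques of \cite{h4,ArX}.
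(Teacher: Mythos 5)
Your overall strategy (coordinate--permutation wreath products for part (1), long orbits of $\mathfrak{X}$-maximal subgroups for part (2)) points in the right direction, but both parts have concrete gaps. In part (1), the construction hinges on a faithful transitive permutation representation of the minimizing group $G_0$ of degree $d$ with $C_1<d\le C_1+1$, i.e.\ of degree exactly $\max|M|$. Such a representation does not exist in general, and neither of your two proposed realizations produces it: the action on the cosets of a largest maximal subgroup has degree $|G_0:M|$ (the \emph{smallest} index, not $|M|$), and the action on the underlying set of $N$ is neither transitive nor of the right degree. The paper avoids this entirely by taking the \emph{regular} wreath product $T=S\wr_{reg}G_0$: the base $N=S^{|G_0|}$ is a chief factor of rank $|G_0|$, which is irrelevant, because what matters is that for each maximal subgroup $K$ of $G_0$ the group $NK$ splits $N$ into minimal normal subgroups of rank exactly $|K|\le C_1+1$ (orbit lengths of $K$ acting by translation on $G_0$), forcing $N\le\mathrm{Int}_{\mathfrak{F}(\mathcal{R})}(T)$, while $N$ is $\mathfrak{F}(\mathcal{R})$-eccentric simply because $T/C_T(N)\cong T$ surjects onto $G_0\notin\mathfrak{F}(\mathcal{R})$. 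That last fact ($G_0\notin\mathfrak{F}(\mathcal{R})$ for \emph{every} generalized rank function) is immediate from the abelian $\mathfrak{F}$-eccentric chief factor of $G_0/\Phi(G_0)$ together with $\mathfrak{N}\subseteq\mathfrak{F}$; you miss this, which is why you resort to the unconvincing ``chief factor just above $D$'' and ``outer automorphism on one coordinate'' workarounds --- note also that eccentricity of a factor \emph{above} $D$ does not by itself prevent $D\le\mathrm{Z}_{\mathfrak{X}}(G)$, whereas eccentricity of $D$ itself settles the matter at once.

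In part (2), your Sylow argument is fine as far as it goes (a Sylow $p$-subgroup of a transitive group of degree $r$ has all orbit lengths divisible by $p^{v_p(r)}$), but it only covers the case where some $p^{v_p(r)}>m_S$, and you concede that the complementary case is where ``an extra idea is needed'' --- that extra idea is precisely the content of the proof. The paper argues directly rather than by contraposition: for any $\mathfrak{F}(\mathcal{R})$-maximal $M\ge H$, the minimal normal subgroups of $\overline{M}$ inside $\overline{H}$ have ranks in $\mathcal{R}(S)\subseteq[0,C_2]$, so $\overline{M}$ permutes the simple factors in blocks of size at most $C_2$ and $\overline{M}\,\overline{T}/\overline{T}\in\mathfrak{G}_\pi$ with $\pi=\{q\le C_2\}$; since every element of $G$ lies in some such $M$, $G/T\in\mathfrak{G}_\pi$, whence there is a $\pi$-subgroup $L$ with $LT=G$, $L\in\mathfrak{G}_\pi\subseteq\mathfrak{F}$, and any $\mathfrak{F}(\mathcal{R})$-maximal $Q\ge L$ (which automatically contains $H\le\mathrm{Int}$) acts transitively on the simple factors, making $\overline{H}$ a minimal normal subgroup of $\overline{Q}$ and forcing $r(\overline{H},G)=r(\overline{H},Q)\in\mathcal{R}(\overline{H})$; the $B$-clause then follows from very-goodness applied to arbitrary $\mathfrak{F}(\mathcal{R})$-maximal subgroups through a given element. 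Without the step ``$G/T$ is a $\pi$-group, hence $T$ has a $\pi$-supplement lying in $\mathfrak{F}$'', your sketch does not close.
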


Recall \cite[3.4.5]{s9} that every  solubly saturated  formation $\mathfrak{F}$ contains the greatest saturated subformation $\mathfrak{F}_l$ with respect to set inclusion.

\begin{theorem}[{\cite[3.4.5]{s9}}]\label{t4}
       Let  $F$ be the canonical composition definition of a non-empty solubly saturated formation $\mathfrak{F}$. Then $f$ is a  local definition of   $\mathfrak{F}_l$, where $f(p)=F(p)$ for all $p\in\mathbb{P}$.\end{theorem}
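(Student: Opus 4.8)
The plan is to show that the local formation $\mathfrak{H}:=LF(f)$ with local definition $f$ (where $f(p)=F(p)$ for all $p\in\mathbb{P}$) is exactly the greatest saturated subformation $\mathfrak{F}_l$ of $\mathfrak{F}$. First I would note that $\mathfrak{H}$ is non-empty (it contains the trivial group, whose chief-factor condition is vacuous) and local, hence saturated by the Gasch\"utz--Lubeseder--Schmid theorem; being local, $\mathfrak{H}$ is also solubly saturated, so Lemma~\ref{lz} is available for $\mathfrak{H}$, for $\mathfrak{F}$, and for every saturated subformation of $\mathfrak{F}$. Since $\mathfrak{F}_l$ is characterised as the unique greatest saturated subformation of $\mathfrak{F}$, it then suffices to establish two inclusions: $(\mathrm{i})$ $\mathfrak{H}\subseteq\mathfrak{F}$, so that $\mathfrak{H}$ is one of the saturated subformations of $\mathfrak{F}$; and $(\mathrm{ii})$ $\mathfrak{G}\subseteq\mathfrak{H}$ for every saturated subformation $\mathfrak{G}$ of $\mathfrak{F}$.

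For $(\mathrm{i})$ I would take $G\in\mathfrak{H}$ and verify, via Lemma~\ref{l1}$(1)$, that every chief factor $\overline{H}$ of $G$ is $\mathfrak{F}$-central. If $\overline{H}$ is an abelian $p$-chief factor, this is immediate since $G/C_G(\overline{H})\in f(p)=F(p)$ by definition of $\mathfrak{H}$. If $\overline{H}$ is non-abelian, pick any $p\in\pi(\overline{H})$; then $G/C_G(\overline{H})\in F(p)\subseteq\mathfrak{F}$, the inclusion $F(p)\subseteq\mathfrak{F}$ being part of the definition of the canonical composition definition, so $\overline{H}$ is again $\mathfrak{F}$-central. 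Hence $G=\mathrm{Z}_\mathfrak{F}(G)$, and $G\in\mathfrak{F}$ by Lemma~\ref{lz}.

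The core of the argument is $(\mathrm{ii})$, which I would reduce to the pointwise inclusion $g(p)\subseteq F(p)$ for all primes $p$, where $g$ denotes the canonical (integrated and full) local definition of $\mathfrak{G}$; indeed, granting this, any $G\in\mathfrak{G}$ has all its chief factors $\mathfrak{G}$-central (Lemma~\ref{lz}), so $G/C_G(\overline{H})\in g(p)\subseteq F(p)=f(p)$ for every $p\in\pi(\overline{H})$ and every chief factor $\overline{H}$ by Lemma~\ref{l1}$(2)$, which gives $G\in LF(f)=\mathfrak{H}$. To prove $g(p)\subseteq F(p)$, I would fix $X\in g(p)$, consider the faithful regular module $\mathbb{F}_p[X]$, and let $W_1,\dots,W_k$ be representatives of its irreducible composition factors, so that $\bigcap_{i=1}^k C_X(W_i)=\mathrm{O}_p(X)$ (the standard description of $\mathrm{O}_p(X)$). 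For each $i$ form $G_i=W_i\rtimes(X/C_X(W_i))$, where $X/C_X(W_i)$ acts faithfully and irreducibly on $W_i$. Then $W_i$ is an abelian $p$-chief factor of $G_i$ with $C_{G_i}(W_i)=W_i$, hence $G_i/C_{G_i}(W_i)\simeq X/C_X(W_i)$, a quotient of $X\in g(p)$ and thus in $g(p)$; and the chief factors of $G_i$ above $W_i$ are $G_i$-isomorphic to chief factors of $X/C_X(W_i)\in g(p)\subseteq\mathfrak{G}$, hence $\mathfrak{G}$-central. By Lemma~\ref{l1}$(2)$ every chief factor of $G_i$ is $\mathfrak{G}$-central, so $G_i\in\mathfrak{G}\subseteq\mathfrak{F}$ by Lemma~\ref{lz}. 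Applying Lemma~\ref{l1}$(1)$ to the abelian $p$-chief factor $W_i$ of the $\mathfrak{F}$-group $G_i$ yields $X/C_X(W_i)=G_i/C_{G_i}(W_i)\in F(p)$. Finally, since $F(p)$ is a formation and $\bigcap_i C_X(W_i)=\mathrm{O}_p(X)$, the group $X/\mathrm{O}_p(X)$ is (isomorphic to) a subdirect product of the groups $X/C_X(W_i)\in F(p)$ and so lies in $F(p)$; as $F(p)=\mathfrak{N}_pF(p)$, this forces $X\in F(p)$.

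Combining $(\mathrm{i})$ and $(\mathrm{ii})$, $\mathfrak{H}$ is a saturated subformation of $\mathfrak{F}$ that contains every saturated subformation of $\mathfrak{F}$, hence $\mathfrak{H}=\mathfrak{F}_l$; that is, $f$ with $f(p)=F(p)$ is a local definition of $\mathfrak{F}_l$. I expect the only real obstacle to be the inclusion $g(p)\subseteq F(p)$ in step $(\mathrm{ii})$: the point is to realise an arbitrary member of $g(p)$ as the group induced on an abelian $p$-chief factor inside a genuine $\mathfrak{G}$-group $G_i$, then to reassemble $X$ from the quotients $X/C_X(W_i)$ using the identity $\bigcap_i C_X(W_i)=\mathrm{O}_p(X)$ and closure of the formation $F(p)$ under subdirect products. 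Checking that each $G_i$ actually belongs to $\mathfrak{G}$ (and the analogous verification in step $(\mathrm{i})$) is routine bookkeeping with $\mathfrak{F}$- and $\mathfrak{G}$-centrality of chief factors.
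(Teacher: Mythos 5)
The paper offers no proof of this statement: it is imported wholesale from Ballester-Bolinches and Ezquerro \cite[3.4.5]{s9}, so there is no in-paper argument to measure yours against. On its own merits your proof is correct and self-contained. Step $(\mathrm{i})$ is the routine verification that $LF(f)\subseteq\mathfrak{F}$ via Lemma~\ref{l1}$(1)$ and Lemma~\ref{lz}, using $F(p)\subseteq\mathfrak{F}$ from the definition of the canonical composition definition. The substantive content is your step $(\mathrm{ii})$: realising an arbitrary $X\in g(p)$ through the single-chief-factor groups $G_i=W_i\rtimes(X/C_X(W_i))$ built from the composition factors $W_i$ of the faithful regular module $\mathbb{F}_p[X]$, checking $G_i\in\mathfrak{G}\subseteq\mathfrak{F}$ by Lemmas~\ref{l1} and~\ref{lz}, reading off $X/C_X(W_i)\in F(p)$ from Lemma~\ref{l1}$(1)$, and reassembling $X$ from the identity $\bigcap_i C_X(W_i)=\mathrm{O}_p(X)$ together with closure of $F(p)$ under subdirect products and fullness $F(p)=\mathfrak{N}_pF(p)$. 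This is essentially the standard argument in the cited monograph (and in Doerk--Hawkes for the saturated case); every auxiliary fact you invoke (the stability-group argument giving $\bigcap_i C_X(W_i)=\mathrm{O}_p(X)$, the existence of the canonical local definition $g$ of $\mathfrak{G}$, subdirect-product closure of formations) is standard and correctly applied. The only cosmetic omission is the trivial case $\mathfrak{G}=\emptyset$ in step $(\mathrm{ii})$, which needs no canonical local definition.
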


The following result directly follows from   Theorems \ref{MT1} and \ref{t4}.

\begin{proposition}\label{t32}
       Let  $\mathfrak{F}$ be a local formation containing all nilpotent groups and $\mathcal{R}$ be a generalized rank function. Then $\mathfrak{F}(\mathcal{R})_l=\mathfrak{F}$.\end{proposition}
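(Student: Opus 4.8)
The plan is to chase canonical definitions through Theorems \ref{MT1} and \ref{t4}; no new idea is needed. First I would record two routine facts. (i) A local formation is solubly saturated, hence composition; and if $F^{\circ}$ denotes the canonical \emph{local} definition of $\mathfrak{F}$, then the canonical \emph{composition} definition $F$ of $\mathfrak{F}$ satisfies $F(0)=\mathfrak{F}$ and $F(p)=F^{\circ}(p)$ for every prime $p$. This is exactly the passage between the two kinds of screens already used in the proof of Corollaries \ref{c11}--\ref{c13}. (ii) $\mathfrak{F}(\mathcal{R})\neq\emptyset$: if $G\in\mathfrak{F}$ then $\mathrm{Z}_{\mathfrak{F}}(G)=G$ by Lemma \ref{lz} (as $\mathfrak{F}$ is solubly saturated), so $G$ has no $\mathfrak{F}$-eccentric chief factor and $G\in\mathfrak{F}(\mathcal{R})$; thus $\mathfrak{F}\subseteq\mathfrak{F}(\mathcal{R})$.

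Next I would invoke Theorem \ref{MT1}$(1)$ with this $\mathfrak{F}$ (which contains $\mathfrak{N}$): $\mathfrak{F}(\mathcal{R})$ is a composition formation whose canonical composition definition $F_{\mathcal{R}}$ satisfies $F_{\mathcal{R}}(0)=\mathfrak{F}(\mathcal{R})$ and $F_{\mathcal{R}}(p)=F(p)=F^{\circ}(p)$ for every prime $p$. Since $\mathfrak{F}(\mathcal{R})$ is a non-empty composition (i.e.\ solubly saturated) formation, Theorem \ref{t4} applies and yields a local definition $f$ of $\mathfrak{F}(\mathcal{R})_l$ with $f(p)=F_{\mathcal{R}}(p)$ for every prime $p$. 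Combining, $f(p)=F^{\circ}(p)$ for all $p$, so $f$ and $F^{\circ}$ are the same formation function; as the local formation defined by $F^{\circ}$ is by construction $\mathfrak{F}$, we get $\mathfrak{F}(\mathcal{R})_l=\mathfrak{F}$. (As a sanity check the inclusion $\mathfrak{F}\subseteq\mathfrak{F}(\mathcal{R})_l$ also follows from (ii) together with the fact that $\mathfrak{F}(\mathcal{R})_l$ is the largest saturated subformation of $\mathfrak{F}(\mathcal{R})$ and $\mathfrak{F}$ is saturated.)

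There is essentially no obstacle here; the only points requiring a word of care are the identification in (i) of the canonical composition definition of a local formation with its canonical local definition (extended by $\mathfrak{F}$ at $0$), and the non-emptiness check in (ii) needed to apply Theorem \ref{t4}. Both are standard.
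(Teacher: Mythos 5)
Your proposal is correct and follows exactly the route the paper intends: the paper proves this proposition by simply citing Theorems \ref{MT1} and \ref{t4}, and your argument fills in precisely those details (the identification of the canonical composition definition of a local formation with its canonical local definition extended by $\mathfrak{F}$ at $0$, and the non-emptiness of $\mathfrak{F}(\mathcal{R})$ via $\mathfrak{F}\subseteq\mathfrak{F}(\mathcal{R})$). No discrepancy to report.
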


In the view of Theorem \ref{t4} A.F. Vasil'ev asked if it is possible to reduce Question \ref{q1} for solubly saturated formations to the case of saturated formations.
Recall that $D_0\mathfrak{X}$ is the class of groups which are the direct products $\mathfrak{X}$-groups. Partial answer on A.F. Vasil'ev's   question is given in

  \begin{theorem}\label{t3}
Let  $F$ be the canonical composition definition of   a non-empty solubly saturated formation $\mathfrak{F}$. Assume that  $F(p)\subseteq\mathfrak{F}_l$ for all $p\in\mathbb{P}$ and   $\mathfrak{F}_l$ is hereditary.


$(1)$ Assume that $\mathrm{Int}_{\mathfrak{F}_l}(G)=\mathrm{Z}_{\mathfrak{F}_l}(G)$ holds for every group $G$. Let \vspace{-3mm}
$$\mathfrak{H}=(S\textrm{ is a simple group }| \textrm{ every } \mathfrak{F}\textrm{-central chief }  D_0(S)\textrm{-factor  is } \mathfrak{F}_l\textrm{-central}). \vspace{-3mm}$$
Then every   chief $D_0\mathfrak{H}$-factor of $G$ below    $\mathrm{Int}_{\mathfrak{F}}(G)$ is $\mathfrak{F}_l$-central in $G$.

$(2)$ \cite[Theorem 3]{ArX} If $\mathrm{Int}_\mathfrak{F}(G)=\mathrm{Z}_\mathfrak{F}(G)$ holds for every group $G$, then $\mathrm{Int}_{\mathfrak{F}_l}(G)=\mathrm{Z}_{\mathfrak{F}_l}(G)$ holds for every group $G$.
\end{theorem}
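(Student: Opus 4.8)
The plan is to prove part $(1)$, part $(2)$ being quoted from \cite{ArX}. Fix a chief factor $H/K$ of $G$ with $K\leq H\leq\mathrm{Int}_\mathfrak{F}(G)$ and $H/K\in D_0\mathfrak{H}$; I aim to show $H/K$ is $\mathfrak{F}_l$-central in $G$. First I would record the two translations the hypotheses buy us. By Theorem \ref{t4} together with $F(p)\subseteq\mathfrak{F}_l$, the function $p\mapsto F(p)$ is the canonical local definition of $\mathfrak{F}_l$; hence by Lemma \ref{l1} a chief factor $\overline{L}$ of an arbitrary group $X$ is $\mathfrak{F}_l$-central in $X$ iff $X/C_X(\overline{L})\in F(q)$ for every $q\in\pi(\overline{L})$, whereas $\overline{L}$ is $\mathfrak{F}$-central in $X$ iff this holds when $\overline{L}$ is abelian and iff $X/C_X(\overline{L})\in\mathfrak{F}$ when $\overline{L}$ is non-abelian. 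In particular every abelian simple group lies in $\mathfrak{H}$, and if $\overline{L}$ is an $\mathfrak{F}$-central chief factor with $\overline{L}\in D_0\mathfrak{H}$, then $\overline{L}$ is $\mathfrak{F}_l$-central: for abelian $\overline{L}$ this is automatic, and for non-abelian $\overline{L}\simeq S^t$ it is precisely the defining property of $S\in\mathfrak{H}$.

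Next I would reduce to $K=1$ by induction on $|G|$. If $K\neq1$, choose a minimal normal subgroup $N$ of $G$ with $N\leq K$ (so $N\leq\mathrm{Int}_\mathfrak{F}(G)$); then $(H/N)/(K/N)\simeq H/K$ is a chief $D_0\mathfrak{H}$-factor of $G/N$ lying below $\mathrm{Int}_\mathfrak{F}(G/N)$, because $\mathrm{Int}_\mathfrak{F}(G)N/N\leq\mathrm{Int}_\mathfrak{F}(G/N)$. Since $K\leq C_G(H/K)$ we have $C_{G/N}((H/N)/(K/N))=C_G(H/K)/N$, so $\mathfrak{F}_l$-centrality of the factor in $G/N$ is equivalent to $\mathfrak{F}_l$-centrality of $H/K$ in $G$; the inductive hypothesis applied to $G/N$ finishes this case. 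So assume $H$ is a minimal normal subgroup of $G$ with $H\leq\mathrm{Int}_\mathfrak{F}(G)$ and $H\in D_0\mathfrak{H}$.

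Now the core step. Let $V$ be an arbitrary $\mathfrak{F}_l$-maximal subgroup of $G$ and extend $V$ to an $\mathfrak{F}$-maximal subgroup $W$ of $G$ (possible since $\mathfrak{F}_l\subseteq\mathfrak{F}$). Then $H\leq\mathrm{Int}_\mathfrak{F}(G)\leq W$, so $H\trianglelefteq W$, and $W\in\mathfrak{F}$, so by Lemma \ref{lz} every $W$-chief factor is $\mathfrak{F}$-central in $W$. Refining $1<H$ to a $W$-chief series $1=H_0<H_1<\dots<H_r=H$, each $H_i/H_{i-1}$ is either elementary abelian or (by Lemmas \ref{ls5} and \ref{l3}) a direct power of a non-abelian simple group, and in both cases it lies in $D_0\mathfrak{H}$ because $H$ does; by the first paragraph each $H_i/H_{i-1}$ is $\mathfrak{F}_l$-central in $W$, whence $H\leq\mathrm{Z}_{\mathfrak{F}_l}(W)$. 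Since $\mathfrak{F}_l$ is a hereditary saturated formation, $\mathrm{Z}_{\mathfrak{F}_l}(W)\cap HV\leq\mathrm{Z}_{\mathfrak{F}_l}(HV)$, so $H\leq\mathrm{Z}_{\mathfrak{F}_l}(HV)$; as $HV/H\simeq V/(V\cap H)\in\mathfrak{F}_l$ and $\mathfrak{F}_l=(X\,|\,\mathrm{Z}_{\mathfrak{F}_l}(X)=X)$ (Lemma \ref{lz}), we get $HV\in\mathfrak{F}_l$, and $\mathfrak{F}_l$-maximality of $V$ forces $HV=V$, i.e. $H\leq V$. Since $V$ was arbitrary, $H\leq\mathrm{Int}_{\mathfrak{F}_l}(G)=\mathrm{Z}_{\mathfrak{F}_l}(G)$ by the hypothesis of $(1)$, so the chief factor $H=H/K$ is $\mathfrak{F}_l$-central in $G$.

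The step I expect to be most delicate is the reduction to $K=1$, specifically the inequality $\mathrm{Int}_\mathfrak{F}(G)N/N\leq\mathrm{Int}_\mathfrak{F}(G/N)$: since $\mathfrak{F}$ is only assumed solubly saturated (not hereditary) one cannot simply cite the familiar hereditary-formation lemmas, and this quotient-compatibility of $\mathrm{Int}_\mathfrak{F}$ — or an equivalent device, such as replacing $G$ by the semidirect product $(H/K)\rtimes(G/C_G(H/K))$ and showing the base subgroup still lies inside the corresponding $\mathrm{Int}_\mathfrak{F}$ — needs its own argument. The other point that must be handled carefully is the claim that every $W$-chief factor inside $H$ really is governed by the $\mathfrak{H}$-condition; this is exactly what the definition of $\mathfrak{H}$ and the hypothesis $F(p)\subseteq\mathfrak{F}_l$ (which forces $F$ to restrict to the canonical local definition of $\mathfrak{F}_l$, so that $\mathfrak{F}$-centrality and $\mathfrak{F}_l$-centrality agree on abelian factors) are tailored to guarantee.
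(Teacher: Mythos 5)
Your argument is essentially correct but follows a genuinely different route from the paper. The paper works entirely in $G/C_G(\overline{H})$: it first shows that $MC_G(\overline{H})/C_G(\overline{H})\in F(p)$ for every $\mathfrak{F}$-maximal subgroup $M$ (using the definition of $\mathfrak{H}$ exactly as you do, but phrased through the canonical definition and $\mathfrak{N}_pF(p)=F(p)$), deduces that every $\mathfrak{F}_l$-subgroup of $G/C_G(\overline{H})$ lies in $F(p)$, and then invokes Skiba's characterization \cite[Theorem A]{h4}: under the hypothesis $\mathrm{Int}_{\mathfrak{F}_l}=\mathrm{Z}_{\mathfrak{F}_l}$ every $s$-critical group for $F(p)$ lies in $\mathfrak{F}_l$, so $G/C_G(\overline{H})$ can contain no $F(p)$-critical subgroup and must itself be in $F(p)$. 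You instead work "upstairs": you show $H/K$ lies below $\mathrm{Z}_{\mathfrak{F}_l}(W)$ for every $\mathfrak{F}$-maximal $W$, push this into $\mathfrak{F}_l$-maximal subgroups via the hereditary-hypercenter inequality $\mathrm{Z}_{\mathfrak{F}_l}(W)\cap HV\leq \mathrm{Z}_{\mathfrak{F}_l}(HV)$, conclude $H/K\leq\mathrm{Int}_{\mathfrak{F}_l}$, and apply the hypothesis literally. Your version is more elementary in that it uses the hypothesis of $(1)$ only as stated and avoids the critical-group machinery; the paper's version has the advantage of never leaving the quotient $G/C_G(\overline{H})$ and so never needs to relate $\mathrm{Int}_{\mathfrak{F}}$ of $G$ to that of a quotient. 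Both proofs use the definition of $\mathfrak{H}$ and Lemma \ref{l3} at the same place and in the same way.

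The one step you correctly flag as delicate, $\mathrm{Int}_\mathfrak{F}(G)N/N\leq\mathrm{Int}_\mathfrak{F}(G/N)$, is in fact true for solubly saturated $\mathfrak{F}$ but does need the argument you have not supplied: given an $\mathfrak{F}$-maximal $U/N$ of $G/N$, a minimal supplement $V$ to $N$ in $U$ has $V\cap N\leq\Phi(V)$, hence $V\cap N$ is nilpotent (Gasch\"utz), hence soluble, so the Baer-local property gives $V\in\mathfrak{F}$ from $V/(V\cap N)\simeq U/N\in\mathfrak{F}$, and any $\mathfrak{F}$-maximal overgroup of $V$ maps onto $U/N$. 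Alternatively, and more in the spirit of your own proof, the reduction to $K=1$ can be avoided entirely: take an $\mathfrak{F}_l$-maximal subgroup $\overline{V}=VK/K$ of $G/K$ (lifted to an $\mathfrak{F}_l$-maximal $V\leq G$ using only the saturation of $\mathfrak{F}_l$, which is covered by \cite[1, 5.7]{s5} as used in the paper), extend $V$ to an $\mathfrak{F}$-maximal $W\supseteq K,H$, run your central computation inside $W/K$ to get $H/K\leq\mathrm{Z}_{\mathfrak{F}_l}(W/K)$, and conclude $HV/K\in\mathfrak{F}_l$, forcing $H/K\leq\overline{V}$. With either repair your proof is complete.
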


\begin{proof}[\bf{Proof of Theorem \ref{t3}}]
 From $F(p)=\mathfrak{N}_pF(p)\subseteq\mathfrak{F}_l$  and Theorem
\ref{t4} it follows that if we restrict $F$ to $\mathbb{P}$, then we obtain the canonical local definition of $\mathfrak{F}_l$.

$(1)$  Let    $\overline{H}=H/K$ be a  chief $D_0\mathfrak{H}$-factor of $G$ below $\mathrm{Int}_\mathfrak{F}(G)$.

$(a)$ \emph{If $\overline{H}$ is abelian, then $MC_G(\overline{H})/C_G(\overline{H}) \in F(p)$ for every $\mathfrak{F}$-maximal subgroup $M$ of $G$.}

If $\overline{H}$ is abelian, then it is an elementary abelian $p$-group for some  $p$ and $\overline{H}\in\mathfrak{F}$. Let $M$ be an $\mathfrak{F}$-maximal subgroup of $G$ and $K=H_0\trianglelefteq H_1\trianglelefteq\dots\trianglelefteq H_n=H$ be a part of a chief series of $M$. Note that $H_i/H_{i-1}$ is an $\mathfrak{F}$-central chief factor of $M$ for all $i=1,\dots, n$. So $M/C_M(H_i/H_{i-1})\in F(p)$ by Lemma \ref{l1} for all $i=1,\dots, n$.
Therefore $M/C_M(\overline{H})\in\mathfrak{N}_pF(p)=F(p)$ by \cite[Lemma 1]{j3}. Now $$MC_G(\overline{H})/C_G(\overline{H})\simeq M/C_M(\overline{H})\in F(p)$$ for every $\mathfrak{F}$-maximal subgroup $M$ of $G$.

$(b)$ \emph{If $\overline{H}$ is non-abelian, then $MC_G(\overline{H})/C_G(\overline{H}) \in F(p)$ for every $\mathfrak{F}$-maximal\,subgroup $M$\,of\,$G$.}

If $\overline{H}$ is non-abelian, then
 it is a direct product of isomorphic non-abelian simple groups. Let $M$ be an $\mathfrak{F}$-maximal subgroup of $G$. By Lemma \ref{l3}, $\overline{H}=\overline{H}_1\times\dots\times \overline{H}_n$ is a direct product of minimal normal subgroups $\overline{H}_i$  of $\overline{M}=M/K$.
Now $\overline{H}_i$ is $\mathfrak{F}$-central in $\overline{M}$ for all $i=1,\dots, n$. Hence $\overline{H}_i$ is $\mathfrak{F}_l$-central in $\overline{M}$ for all $i=1,\dots, n$ by the definition of $\mathfrak{H}$.  Therefore $M/C_M(\overline{H}_i)\in F(p)$ for all $p\in\pi(\overline{H}_i)$ by Lemma \ref{l1}. Note that $C_M(\overline{H})=\cap_{i=1}^n C_M(\overline{H}_i)$. Since $F(p)$ is a formation,    $$M/\cap_{i=1}^n C_M(\overline{H}_i)= M/C_M(\overline{H})\in F(p)$$ for all $p\in\pi(\overline{H})$. It means that $MC_G(\overline{H})/C_G(\overline{H})\simeq M/C_M(\overline{H})\in F(p)$ for every $p\in\pi(\overline{H})$ and every $\mathfrak{F}$-maximal subgroup $M$ of $G$.

$(c)$ \emph{All $\mathfrak{F}_l$-subgroups of $G/C_G(\overline{H})$ are $F(p)$-groups for all $p\in\pi(\overline{H})$.}

Let $Q/C_G(\overline{H})$ be an  $\mathfrak{F}_l$-maximal subgroup of $G/C_G(\overline{H})$. Then there exists an  $\mathfrak{F}_l$-maximal subgroup $N$ of $G$ with $NC_G(\overline{H})/C_G(\overline{H})=Q/C_G(\overline{H})$ by \cite[1,  5.7]{s5}. From $\mathfrak{F}_l\subseteq\mathfrak{F}$ it follows that there exists an $\mathfrak{F}$-maximal subgroup $L$ of $G$ with $N\leq L$. So  $$Q/C_G(\overline{H})\leq LC_G(\overline{H})/C_G(\overline{H})\in F(p) \textrm{ for all }p\in\pi(\overline{H})$$ by $(a)$ and $(b)$. Since $F(p)$ is hereditary by \cite[IV, 3.16]{s8},  $Q/C_G(\overline{H})\in F(p)$. It means that all $\mathfrak{F}_l$-maximal subgroups of $G/C_G(\overline{H})$ are $F(p)$-groups. Hence all $\mathfrak{F}_l$-subgroups of $G/C_G(\overline{H})$ are $F(p)$-groups.

$(d)$ \emph{$\overline{H}$  is $\mathfrak{F}_l$-central in $G$.}

Assume now that $\overline{H}$ is not  $\mathfrak{F}_l$-central in $G$. So $G/C_G(\overline{H})\not\in F(p)$ for some $p\in\pi(\overline{H})$ by Lemma \ref{l1}. It means that    $G/C_G(\overline{H})$ contains an $s$-critical for $F(p)$ subgroup  $S/C_G(\overline{H})$. Since   $\mathrm{Int}_{\mathfrak{F}_l}(G)=\mathrm{Z}_{\mathfrak{F}_l}(G)$ holds for every group $G$,    $S/C_G(\overline{H})\in\mathfrak{F}_l$ by \cite[Theorem A]{h4}. Therefore $S/C_G(\overline{H})\in F(p)$ by $(c)$, a contradiction. Thus $\overline{H}$ is    $\mathfrak{F}_l$-central in $G$.
\end{proof}

\begin{proof}[\bf{Proof of Theorem \ref{t31}}]
$(1)$ Since $\mathrm{Z}_{\mathfrak{F}(\mathcal{R})}(G)= \mathrm{Int}_{\mathfrak{F}(\mathcal{R})}(G)$ holds for every group $G$, we see that $\mathrm{Z}_{\mathfrak{F}}(G)= \mathrm{Int}_{\mathfrak{F}}(G)$ holds for every group $G$ by Theorem \ref{t31} and Proposition \ref{t32}.

$(a)$ \emph{There is $G\in\mathcal{M}(\mathfrak{F})$ with $\mathrm{F}(G)=\mathrm{\tilde{F}}(G)$. }

 From $\mathfrak{F}\neq\mathfrak{G}$ it follows that there exist $F(p)$-critical groups for some $p$. Let   $N$ be the minimal order group among them. Then $\mathrm{O}_p(N)=1$ and $N$ has the unique minimal normal subgroup. Note that $N\in\mathfrak{F}$ by \cite[Theorem A]{h4}. There exists a simple $\mathbb{F}_pN$-module $M$ which is faithful for $N$ by \cite[10.3B]{s8}. Let     $G=M\rtimes N$. Note that  $M=\mathrm{F}(G)=\mathrm{\tilde{F}}(G)$ and $G\not\in\mathfrak{F}$. From $F(p)=\mathfrak{N}_pF(p)$ it follows that $G\in\mathcal{M}(\mathfrak{F})$.

$(b)$ \emph{If $G\in\mathcal{M}(\mathfrak{F})$ with $\mathrm{F}(G)=\mathrm{\tilde{F}}(G)$, then $G\not\in\mathfrak{F}(\mathcal{R})$ for any generalized rank function $\mathcal{R}$.          }

Since $\mathfrak{F}$ is saturated, we see that $G/\Phi(G)\in\mathcal{M}(\mathfrak{F})$.  Note that $G/\Phi(G)$ has a unique minimal normal subgroup $N/\Phi(G)$ and $N/\Phi(G)$ is an abelian $\mathfrak{F}$-eccentric chief factor of $G$. From $\mathfrak{N}\subseteq\mathfrak{F}$ and the definition of $\mathfrak{F}(\mathcal{R})$ it follows that     $G\not\in\mathfrak{F}(\mathcal{R})$ for any generalized rank function $\mathcal{R}$.

$(c)$ \emph{ Let $G\in\mathcal{M}(\mathfrak{F})$ with $\mathrm{F}(G)=\mathrm{\tilde{F}}(G)$, $m$ be the greatest number\,among\,orders\,of\,maximal subgroups\,of\,$G$,\, $S$\,be\,a\,simple\,group\,with\,$m\in\mathcal{R}(S)$\,and\,$T=S\wr_{reg} G$.\,Then\,  $\mathrm{Z}_{\mathfrak{F}(\mathcal{R})}(T)\neq \mathrm{Int}_{\mathfrak{F}(\mathcal{R})}(T)$.}

Let $N$ be the base of $T$ and $M$ be an $\mathfrak{F}(\mathcal{R})$-maximal subgroup of $T$. Note that $MN/N\in \mathfrak{F}(\mathcal{R})$ is isomorphic to some subgroup of $G\not\in\mathfrak{F}(\mathcal{R})$. Let $K$ be a maximal subgroup of $G$. Then $K\in\mathfrak{F}$. Let show that $NK\in\mathfrak{F}(\mathcal{R})$. From the properties of wreath product it follows that $N$ is the direct product of $|G:K|$ minimal normal subgroups of rank $|K|$ of $NK$ and if some element on $NK$ fixes a composition factor of some of these subgroups, then it induces an inner automorphism on it.
It means that $NK\in\mathfrak{F}(\mathcal{R})$. Hence
$N\leq \mathrm{Int}_{\mathfrak{F}(\mathcal{R})}(T)$.
Assume that  $N\leq \mathrm{Z}_{\mathfrak{F}(\mathcal{R})}(T)$. So $T/C_T(N)\in\mathfrak{F}(\mathcal{R})$. Then $G\not\in\mathfrak{F}(\mathcal{R})$ is isomorphic to a quotient of $T/C_T(N)$, a contradiction. Hence $N$ is an $\mathfrak{F}(\mathcal{R})$-eccentric chief factor of $T$.

$(d)$ \emph{The final step.}

From $(c)$ it follows that if
$$ \min_{G\in\mathcal{M}(\mathfrak{F})\textrm{ with }\mathrm{F}(G)=\mathrm{\tilde{F}}(G)}\max_{M \textrm{ is a maximal subgroup of }G}|M|\in\mathcal{R}(S)\textrm{ for some simple group }S\not\in\mathfrak{F},$$
then there is a group $T$  with  $\mathrm{Z}_{\mathfrak{F}(\mathcal{R})}(T)\neq \mathrm{Int}_{\mathfrak{F}(\mathcal{R})}(T)$. The contradiction to $\mathrm{Z}_{\mathfrak{F}(\mathcal{R})}(G)= \mathrm{Int}_{\mathfrak{F}(\mathcal{R})}(G)$ holds for every group $G$.

$(2)$ Let $\pi={\{q\in\mathbb{P}\,|\,q\leq C_2\}}$ and  $\overline{H}=H/K\not\in\mathfrak{F}$ be a
$G$-composition factor     below $\mathrm{Int}_{\mathfrak{F}(\mathcal{R})}(G)$. Then $\overline{H}=\overline{H}_1\times\dots\times\overline{H}_n$ is the direct product of  isomorphic simple groups. Let $T/K=\overline{T}=\cap_{i=1}^n N_{\overline{G}}(\overline{H}_i)$.

Let $M$ be an $\mathfrak{F}(\mathcal{R})$-maximal subgroup of $G$. Then $H/K\leq M/K=\overline{M}\in\mathfrak{F}(\mathcal{R})$. Since $\overline{H}$ is non-abelian it is the direct product of minimal normal subgroups of $M$ by Lemma \ref{l3}. Let $\overline{N}_i=\overline{N}_{i,1}\times\dots\times\overline{N}_{i, k}$ be one of them and $\overline{T}_i=\cap_{j=1}^k N_{\overline{M}}(\overline{N}_{i, j})$. Then $\overline{M}/\overline{T}_i$ is isomorphic to a subgroup of the symmetric group of degree $k$ by \cite[1.1.40(6)]{s9}. From $k\leq C_2$ it follows that $\overline{M}/\overline{T}_i\in\mathfrak{G}_\pi$. Since $\mathfrak{G}_\pi$ is a formation, we see that  $\overline{M}\overline{T}/\overline{T}\simeq \overline{M}/\cap_i\overline{T}_i\in\mathfrak{G}_\pi$.

From $\mathfrak{N}\subseteq\mathfrak{F}$ it follows that every element of $G$ lies in some $\mathfrak{F}(\mathcal{R})$-maximal subgroup of $G$. Hence   $\overline{G}/\overline{T}\simeq G/T\in\mathfrak{G}_\pi$ and this group is isomorphic to a transitive group of permutations on $\{\overline{H}_1,\dots,\overline{H}_n\}$. According to \cite[1,  5.7]{s5} there is a $\pi$-subgroup $L$ of $G$ with $LT=G$. Note that $L\in\mathfrak{F}$. Hence there is an $\mathfrak{F}(\mathcal{R})$-maximal subgroup $Q$ of $G$ with $L\leq Q$. So     $LH\leq Q$.
Now $H/K$ is a minimal normal subgroup of $Q/K$. Therefore $m=r(\overline{H}, G)=gr(\overline{H}, Q)\in\mathcal{R}(\overline{H})$.

If $m\in A_\mathcal{R}(\overline{H})$, then we are done. Assume that $m\in B_\mathcal{R}(\overline{H})$. Now for every $x\in G$, there is  a $\mathfrak{F}(\mathcal{R})$-maximal subgroup $P$ of $G$ with $x\in Q$. So $H/K$ is a direct product of minimal normal subgroups of $Q/K$. Since $\mathcal{R}$ is very good generalized rank function,
if $x$ fixes a composition factor of $\overline{H}$, then it induces an inner automorphism on it.  Thus $gr(\overline{H}, G)\in\mathcal{R}(\overline{H})$.
\end{proof}

\begin{proposition}\label{p7}
  Let $\mathfrak{F}$ be a hereditary saturated formation containing all nilpotent groups and $\mathcal{R}$ be a very good generalized rank function. Then $\mathrm{Z}_{\mathfrak{F}(\mathcal{R})}(G)\leq \mathrm{Int}_{\mathfrak{F}(\mathcal{R})}(G)$.
\end{proposition}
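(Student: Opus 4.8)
The plan is to show that the generalized $\mathfrak{F}(\mathcal{R})$-hypercenter $\mathrm{Z}_{\mathfrak{F}(\mathcal{R})}(G)$ is contained in every $\mathfrak{F}(\mathcal{R})$-maximal subgroup of $G$, which gives the desired inclusion. So let $Z = \mathrm{Z}_{\mathfrak{F}(\mathcal{R})}(G)$ and let $M$ be an arbitrary $\mathfrak{F}(\mathcal{R})$-maximal subgroup of $G$; the goal is $Z \leq M$, equivalently $ZM \in \mathfrak{F}(\mathcal{R})$ (since then maximality of $M$ forces $ZM = M$). First I would argue that since $\mathfrak{F}(\mathcal{R})$ is a composition formation (Theorem \ref{MT1}) and $\mathfrak{N}\subseteq\mathfrak{F}\subseteq\mathfrak{F}(\mathcal{R})$, every subgroup of $G$ lies in some $\mathfrak{F}(\mathcal{R})$-maximal subgroup, and that it suffices to verify the claim for a chief-type reduction: by induction on $|G|$ one may pass to a minimal normal subgroup $N$ of $G$ contained in $Z$, so that $Z/N = \mathrm{Z}_{\mathfrak{F}(\mathcal{R})}(G/N)$ (using that the hypercenter behaves well under quotients) and $MN/N$ is $\mathfrak{F}(\mathcal{R})$-maximal in $G/N$; the inductive step then handles everything above $N$, leaving us to deal with the single chief factor $N$ below $M$-analysis.

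The core of the argument is then to show: if $N$ is a minimal normal subgroup of $G$ that is $\mathfrak{F}(\mathcal{R})$-central in $G$, then $NM \in \mathfrak{F}(\mathcal{R})$ for every $\mathfrak{F}(\mathcal{R})$-maximal subgroup $M$. Here I would split into the abelian and non-abelian cases for $N$. If $N$ is abelian, then $N$ is an elementary abelian $p$-group, $N$ is $\mathfrak{F}$-central in $G$, hence $G/C_G(N) \in F(p)$ by Lemma \ref{l1}, where $F = F_\mathcal{R}$ restricted to $\mathbb{P}$ is the canonical composition definition; since $F(p) = \mathfrak{N}_p F(p)$ and $F(p)$ is hereditary (\cite[IV, 3.16]{s8}), one argues as in step $(a)$ of the proof of Theorem \ref{t3} that $M/C_M(N) \in F(p)$, hence $N$ is $\mathfrak{F}$-central in $NM$, and since every chief factor of $NM$ inside $M$ already satisfies the $\mathfrak{F}(\mathcal{R})$-condition (as $M \in \mathfrak{F}(\mathcal{R})$), a Jordan–Hölder argument via Lemma \ref{Lgr} gives $NM \in \mathfrak{F}(\mathcal{R})$. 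If $N$ is non-abelian, then $N$ is a direct product of isomorphic simple non-abelian groups; being $\mathfrak{F}(\mathcal{R})$-central means $N \notin \mathfrak{F}$ is impossible unless the generalized-rank condition holds, i.e. $r(N,G) \in A_\mathcal{R}(N) \cup B_\mathcal{R}(N)$ and $G$ induces only inner automorphisms on composition factors of $N$ — wait, more carefully: $\mathfrak{F}(\mathcal{R})$-central here means $N \rtimes G/C_G(N) \in \mathfrak{F}(\mathcal{R})$, so the chief factor $N$ of that semidirect product must be $\mathfrak{F}$-central or satisfy $gr(N, \cdot)\in\mathcal{R}(N)$. By Lemma \ref{l3}, $N$ is a direct product of minimal normal subgroups $N_1,\dots,N_k$ of $M$ (here using $N \trianglelefteq NM$, $M$ normalizes each "block structure"), and since $M \in \mathfrak{F}(\mathcal{R})$ each $N_i$ is $\mathfrak{F}(\mathcal{R})$-central in $M$; because $\mathcal{R}$ is very good, the rank of $N_i$ in $M$ divides (in fact is $\leq$) that of $N$ in the reference group and the inner-automorphism condition is inherited, so combining gives that $N$ itself satisfies the $\mathfrak{F}(\mathcal{R})$-condition as a section of $NM$, and again Jordan–Hölder closes the case.

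The main obstacle I expect is the non-abelian case: one must carefully reconcile the generalized rank $r(N,G)$ appearing in the hypercentrality hypothesis (computed relative to $G$, via $N \rtimes G/C_G(N)$) with the ranks of the minimal normal subgroups of $M$ into which $N$ decomposes, and check that the "induces inner automorphism on composition factors" clause passes correctly from $G$ (or from $N\rtimes G/C_G(N)$) down to $NM$ and to $M$. This is precisely where the hypothesis that $\mathcal{R}$ is \emph{very good} (condition $(b)$: $a \in B_\mathcal{R}(S)$ and $b \leq a$ imply $b \in B_\mathcal{R}(S)$, and similarly for $A_\mathcal{R}$) is used, together with \cite[\S1, Lemma 1]{h1} to compare $r(N_i, M)$ with $r(N, G)$ as in the proof of Theorem \ref{MT1}(2). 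Once these compatibility facts are in place, the Jordan–Hölder / Lemma \ref{Lgr} bookkeeping and the conclusion $NM \in \mathfrak{F}(\mathcal{R})$ are routine, and maximality of $M$ finishes the proof.
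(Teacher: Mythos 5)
Your core argument is essentially the paper's: fix an $\mathfrak{F}(\mathcal{R})$-maximal subgroup $M$, show $M\mathrm{Z}_{\mathfrak{F}(\mathcal{R})}(G)\in\mathfrak{F}(\mathcal{R})$ by examining its chief factors below the hypercenter, splitting into the $\mathfrak{F}$-central case (handled via Lemma \ref{l1} and the heredity of $F(p)$ from \cite[IV, 3.16]{s8}) and the non-abelian eccentric case (handled via Lemma \ref{l3}, the rank-divisibility comparison from the proof of Theorem \ref{MT1}(2), and the \emph{very good} hypothesis), then invoking maximality of $M$. One remark: the induction-on-$|G|$ wrapper you put around this is both unnecessary and the weakest part of your sketch --- the claims that $MN/N$ is $\mathfrak{F}(\mathcal{R})$-maximal in $G/N$ and that $Z/N$ equals $\mathrm{Z}_{\mathfrak{F}(\mathcal{R})}(G/N)$ are not justified (images of $\mathfrak{X}$-maximal subgroups need not be $\mathfrak{X}$-maximal; \cite[1, 5.7]{s5} only gives the lifting direction). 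The paper avoids this entirely by taking a full chief series $1=Z_0\trianglelefteq\dots\trianglelefteq Z_n=\mathrm{Z}_{\mathfrak{F}(\mathcal{R})}(G)$ of $G$, refining it to a chief series of $N=M\mathrm{Z}_{\mathfrak{F}(\mathcal{R})}(G)$ by Jordan--H\"{o}lder, and running your ``core'' computation on every layer $Z_i/Z_{i-1}$ at once; you should do the same, since your single-layer argument applies verbatim to each layer.
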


\begin{proof} Let $\mathfrak{F}$ be a hereditary saturated formation with the canonical local definition $F$,
$M$ be an    $\mathfrak{F}(\mathcal{R})$-maximal subgroup of $G$ and $N=M\mathrm{Z}_{\mathfrak{F}(\mathcal{R})}(G)$. Let show that $N\in \mathfrak{F}(\mathcal{R})$. It is sufficient to show that $H/K\not\in\mathfrak{F}$ and $gr(H/K, G)\in\mathcal{R}(H/K)$  for every  $\mathfrak{F}$-eccentric chief factor $H/K$ of $N$ below $\mathrm{Z}_{\mathfrak{F}(\mathcal{R})}(G)$.

 Let $1=Z_0\trianglelefteq Z_1\trianglelefteq\dots\trianglelefteq Z_n=\mathrm{Z}_{\mathfrak{F}(\mathcal{R})}(G)$ be  a chief series of $G$ below $\mathrm{Z}_{\mathfrak{F}(\mathcal{R})}(G)$. Then we may assume that $Z_{i-1}\leq K\leq H\leq Z_i$ for some $i$ by the Jordan-H\"{o}lder theorem.
Note that $$ (Z_i/Z_{i-1})\rtimes G/C_G(Z_i/Z_{i-1})\in\mathfrak{F}(\mathcal{R}).$$
Hence if $Z_i/Z_{i-1}\not\in\mathfrak{F}$, then $gr(Z_i/Z_{i-1}, G)\in\mathcal{R}(Z_i/Z_{i-1})$ and $Z_i/Z_{i-1}$ is non-abelian. Note that every composition factor of $H/K$ is a composition factor of $Z_i/Z_{i-1}$ and $\mathcal{R}$ is a very good generalized rank function. Hence $gr(H/K, N)\in\mathcal{R}(H/K)$.

If $Z_i/Z_{i-1}\in\mathfrak{F}$, then it is   an $\mathfrak{F}$-central chief factor of $(Z_i/Z_{i-1})\rtimes G/C_G(Z_i/Z_{i-1})$. In this case $G/C_G(Z_i/Z_{i-1})\in F(p)$ for all $p\in\pi(Z_i/Z_{i-1})$ by Lemma \ref{l1}. Since $F(p)$ is hereditary by \cite[IV, 3.16]{s8} $$NC_G(Z_i/Z_{i-1})/C_G(Z_i/Z_{i-1})\simeq N/C_N(Z_i/Z_{i-1})\in F(p)$$ for all $p\in\pi(Z_i/Z_{i-1})$. Note that $N/C_N(H/K)$  is a quotient group of $N/C_N(Z_i/Z_{i-1})$. Thus $H/K$ is  an $\mathfrak{F}$-central chief factor of $N$  by Lemma \ref{l1}.

Hence $N\in\mathfrak{F}(\mathcal{R})$. So $N=M\mathrm{Z}_{\mathfrak{F}(\mathcal{R})}(G)=M$. Therefore $\mathrm{Z}_{\mathfrak{F}(\mathcal{R})}(G)\leq M$ for every $\mathfrak{F}(\mathcal{R})$-maximal subgroup $M$ of $G$.
\end{proof}

\begin{proof}[\bf Proof of Theorem \ref{T3}] $(1)\Rightarrow(2)$.  Suppose that  $\mathrm{Z}_{\mathfrak{F}}(G)= \mathrm{Int}_{\mathfrak{F}}(G)$ holds for every group $G$ and
\begin{center} $\bigcup_{n=1}^m(\mathrm{Out}(G)\wr S_n\,|\,G\not\in\mathfrak{F}$ is a simple group and $n\in A_\mathcal{R}(G))\subseteq\mathfrak{F}$.
   \end{center}
     Let show that  $\mathrm{Int}_{\mathfrak{F}(\mathcal{R})}(G)=\mathrm{Z}_{\mathfrak{F}(\mathcal{R})}(G)$ also holds for every group $G$. Let    $\overline{H}=H/K$ be a chief factor of $G$ below $\mathrm{Int}_{\mathfrak{F}(\mathcal{R})}(G)$ and $\overline{G}=G/K$. Note that $G/C_G(\overline{H})\simeq \overline{G}/C_{\overline{G}}(\overline{H})$.

  $(a)$ \emph{If $\overline{H}\in\mathfrak{F}$, then it is $\mathfrak{F}(\mathcal{R})$-central in $G$.}

Directly follows from  Theorem \ref{t3}, $(3)$ of Definition \ref{MD1} and $\mathfrak{F}\subseteq \mathfrak{F}(\mathcal{R})$.

   $(b)$ \emph{If $\overline{H}\not\in\mathfrak{F}$, then $gr(\overline{H}, G)\in\mathcal{R}(\overline{H})$.}

Directly follows from   $(2)$ of Theorem \ref{t31}

 $(c)$ \emph{If $\overline{H}\not\in\mathfrak{F}$ and $n=r(\overline{H}, G)\in A_\mathcal{R}(\overline{H})$, then $\overline{H}$  is $\mathfrak{F}(\mathcal{R})$-central in $G$.}

Recall that $\overline{G}/C_{\overline{G}}(\overline{H})\overline{H}$ is isomorphic to a subgroup of $\mathrm{Out}(\overline{H})$ and $\overline{H}=\overline{H}_1\times\dots\times\overline{H}_n$ is the direct product of isomorphic simple groups. So $\overline{G}/C_{\overline{G}}(\overline{H})\overline{H}$ is isomorphic to a subgroup of $\mathrm{Out}(\overline{H}_i)\wr S_n\in\mathfrak{F}$. Since $\mathfrak{F}$ is hereditary, $\overline{G}/C_{\overline{G}}(\overline{H})\overline{H}\in\mathfrak{F}$.  Now $\overline{G}/C_{\overline{G}}(\overline{H})\in\mathfrak{F}(\mathcal{R})$ by Jordan-H\"{o}lder theorem and the definition of $\mathfrak{F}(\mathcal{R})$. So  $\overline{H}$  is $\mathfrak{F}(\mathcal{R})$-central in $G$ by Lemma \ref{l1}.

$(d)$ \emph{If $\overline{H}\not\in\mathfrak{F}$ and $n=r(\overline{H}, G)\in B_\mathcal{R}(\overline{H})$, then $\overline{H}$  is $\mathfrak{F}(\mathcal{R})$-central in $G$.}

 Now $\overline{H}=\overline{H}_1\times\dots\times\overline{H}_n$ is the direct product of isomorphic simple groups and every element of $G$ that fixes some $\overline{H}_i$ induces an inner automorphism on it. Hence $\overline{H}C_{\overline{G}}(\overline{H})=\cap_{i=1}^nN_{\overline{G}}(\overline{H}_i)$.  According to \cite[1.1.40(6)]{s9} $\overline{G}/C_{\overline{G}}(\overline{H})\overline{H}$ is isomorphic to a subgroup of $S_n$. From $\mathfrak{G}_{\{q\in\mathbb{P}\,|\,q\leq m\}}\subseteq\mathfrak{F}$ and $n\leq m$ it follows that $S_n\in\mathfrak{F}$. Since $\mathfrak{F}$ is hereditary, we see that $\overline{G}/C_{\overline{G}}(\overline{H})\overline{H}\in\mathfrak{F}$. Now $\overline{G}/C_{\overline{G}}(\overline{H})\in\mathfrak{F}(\mathcal{R})$ by Jordan-H\"{o}lder theorem and the definition of $\mathfrak{F}(\mathcal{R})$. Thus  $\overline{H}$  is $\mathfrak{F}(\mathcal{R})$-central in $G$ by Lemma \ref{l1}.

  $(c)$ \emph{$\mathrm{Z}_{\mathfrak{F}(\mathcal{R})}(G)= \mathrm{Int}_{\mathfrak{F}(\mathcal{R})}(G)$.}

  From   $(a)$ $(c)$ and $(d)$ it follows that   $\mathrm{Int}_{\mathfrak{F}(\mathcal{R})}(G)\leq\mathrm{Z}_{\mathfrak{F}(\mathcal{R})}(G)$.
  By Proposition \ref{p7}, $\mathrm{Z}_{\mathfrak{F}(\mathcal{R})}(G)\leq \mathrm{Int}_{\mathfrak{F}(\mathcal{R})}(G)$. Thus $\mathrm{Z}_{\mathfrak{F}(\mathcal{R})}(G)= \mathrm{Int}_{\mathfrak{F}(\mathcal{R})}(G)$.

  $(2)\Rightarrow(1)$. Assume that $\mathrm{Z}_{\mathfrak{F}(\mathcal{R})}(G)= \mathrm{Int}_{\mathfrak{F}(\mathcal{R})}(G)$ holds for every group $G$. Now $\mathfrak{F}(\mathcal{R})_l=\mathfrak{F}$ by Proposition \ref{t32}.  Hence $\mathrm{Z}_{\mathfrak{F}}(G)= \mathrm{Int}_{\mathfrak{F}}(G)$ holds for every group $G$ by $(2)$ of Theorems \ref{t3} and \ref{MT1}.

Assume that $G\not\in\mathfrak{F}$ is a simple group, $n\in A_\mathcal{R}(G)$. Let $T=\mathrm{Aut}(G)\wr S_n$ and $M$ be an $\mathfrak{F}(\mathcal{R})$-maximal subgroup of $T$. Note that     $T$ has a unique minimal normal subgroup $N$ which is isomorphic to the direct product of copies of $G$ and $r(N, T)=n$.

Let $L=NM$. Then $N$ is a direct product of minimal normal subgroups $N_i$ of $L$. Since $\mathcal{R}$ is very good generalized rank function, we see that $n\geq r(N_i, L)\in\mathcal{R}(N_i)$. From $L/N\simeq M\in\mathfrak{F}(\mathcal{R})$ it follows that $L\in\mathfrak{F}(\mathcal{R})$. So $M=L$. It means that $N\leq \mathrm{Int}_{\mathfrak{F}(\mathcal{R})}(T)=\mathrm{Z}_{\mathfrak{F}(\mathcal{R})}(T)$. Hence $T/C_T(N)\simeq T\in\mathfrak{F}(\mathcal{R})$. Thus $T/N\simeq \mathrm{Out}(G)\wr S_n\in\mathfrak{F}(\mathcal{R})$.
From $\mathfrak{G}_{\{q\in\mathbb{P}\,|\,q\leq m\}}\subseteq\mathfrak{F}$ it follows that $S_n\in\mathfrak{F}$. Note that $\mathrm{Out}(G)$ is soluble by Schreier conjecture. Hence $(T/N)^{E\mathfrak{F}}\simeq 1$. Now $T/N\in\mathfrak{F}$ by point $(a)$ of $(3)$ of Theorem 2.
\end{proof}

\begin{proof}[\bf Proof of Corollary \ref{c30}]
Let $\mathcal{R}(S)\equiv(\emptyset, 1)$. It is clear that $\mathcal{R}$ is very good. Recall that $\mathfrak{F}^*=\mathfrak{F}(\mathcal{R})$. Now Corollary \ref{c30} directly follows from Theorem \ref{T3}.
\end{proof}

\begin{proof}[\bf Proof of Corollary \ref{c31}]
  Assume that $\mathrm{Z}_{\mathfrak{N}(\mathcal{R})}(G)= \mathrm{Int}_{\mathfrak{N}(\mathcal{R})}(G)$ holds for every group $G$.

  Note that if every maximal subgroup of a noncyclic group $G$ is a $2$-group, the $G$ is nilpotent. From $S_3\in\mathcal{M}(\mathfrak{N})$ with $\mathrm{F}(S_3)=\tilde{\mathrm{F}}(S_3)$ it follows that $$ C_1=\min_{G\in\mathcal{M}(\mathfrak{N})\textrm{ with }\mathrm{F}(G)=\mathrm{\tilde{F}}(G)}\max_{M \textrm{ is a maximal subgroup of }G}|M|-1=2.$$

  Let $N$ be a simple non-abelian group.
  Then $\mathcal{R}(N)\subseteq[0, 2]$  by Theorem \ref{t31}. If $1\in A_\mathcal{R}(N)$ (resp. $2\in A_\mathcal{R}(N)$), then $\mathrm{Out}(N)$ (resp. $\mathrm{Out}(N)\wr S_2$) is nilpotent by Theorem \ref{T3}. Note that if $|\mathrm{Out}(N)|$ has an odd prime divisor $p$, then $\mathrm{Out}(N)\wr S_2$ contains a non-nilpotent dihedral subgroup with $2p$ elements. So if $\mathrm{Out}(N)\wr S_2$ is nilpotent, then it is a $2$-group.

  Assume now that for any simple non-abelian group $N$ holds:
$\mathcal{R}(N)\subseteq[0, 2]$ and if $1\in A_\mathcal{R}(N)$ (resp. $2\in A_\mathcal{R}(N)$), then $\mathrm{Out}(N)$ is nilpotent (resp. a  $2$-group).  Note that $\mathfrak{N}_2\subseteq\mathfrak{N}$. Then $\mathrm{Z}_{\mathfrak{N}(\mathcal{R})}(G)= \mathrm{Int}_{\mathfrak{N}(\mathcal{R})}(G)$ holds for every group $G$ by Theorem \ref{T3}.
\end{proof}

  \subsection*{Acknowledgments}

I am grateful to A.\,F. Vasil'ev for helpful discussions.

\end{document}